\title{Contact orderability up to conjugation}
\author{Kai~Cieliebak$^1$\thanks{Partially supported by  DFG grant CI 45/8-1}\and Yakov~Eliashberg$^2$\thanks{Partially supported by NSF grant  DMS-1505910}\and Leonid Polterovich$^3$\thanks{Partially supported   by the ISF grant 178/13 and
the ERC  Advanced grant 338809}
\and \\
 \small{$^1$Institut f\"ur Mathematik, Universit\"at Augsburg, Germany}\\
 \small{$^2$Department of Mathematics, Stanford University, USA}\\
\small{ $^3$Department of Mathematics, Tel Aviv University, Israel
 }}
 \date{\today}
\theoremstyle{plain}
\newtheorem{theorem}{Theorem}[section]
\newtheorem{thm}[theorem]{Theorem}
\newtheorem{corollary}[theorem]{Corollary}
\newtheorem{cor}[theorem]{Corollary}
\newtheorem{proposition}[theorem]{Proposition}
\newtheorem{prop}[theorem]{Proposition}
\newtheorem{lemma}[theorem]{Lemma}
\theoremstyle{remark}
\newtheorem{remark}[theorem]{Remark}
\newtheorem{rem}[theorem]{Remark}
\newtheorem{example}[theorem]{Example}
\newtheorem{definition}{Definition}
\newcommand{\id}{{\rm id}}
\newcommand{\p}{\partial}
\newcommand{\om}{\omega}
\newcommand{\eps}{\varepsilon}
\newcommand{\into}{\hookrightarrow}
\newcommand{\la}{\langle}
\newcommand{\ra}{\rangle}
\newcommand{\wt}{\widetilde}
\newcommand{\wh}{\widehat}
\newcommand{\N}{{\mathbb{N}}}
\newcommand{\Z}{{\mathbb{Z}}}
\newcommand{\R}{{\mathbb{R}}}
\newcommand{\C}{{\mathbb{C}}}
\newcommand{\st}{{\rm st}}
\newcommand{\Ad}{{\rm Ad}}
\renewcommand{\min}{{\rm min}}
\renewcommand{\max}{{\rm max}}
\newcommand{\Diff}{{\rm Diff}}
\newcommand{\supp}{{\rm supp}}
\newcommand{\Supp}{\mathrm{supp}}
\newcommand{\DD}{\mathcal{D}}
\newcommand{\EE}{\mathcal{E}}
\renewcommand{\AA}{\mathcal{A}}
\newcommand{\g}{{\mathfrak g}}         
\newcommand{\fC}{{\mathfrak C}}
\newcommand{\fc}{{\mathfrak c}}
\begin{document}

\maketitle
  \rightline{\small{\em To the memory of Vladimir Igorevich Arnold}}
\tableofcontents

\section{Introduction}

A partial order on  groups of contact diffeomorphisms was  introduced
in \cite{EP} as a contact analog of Hofer's geometry for groups of
Hamiltonian diffeomorphisms of symplectic manifolds.
In this paper we  begin studying   the  remnants of this order on the conjugacy
classes of contactomorphisms.   Our main interest  in this paper are non-compact contact manifolds, and more specifically   a special class of non-compact contact manifolds which we call {\em convex at infinity}, see Section \ref{sec:conv-inf} below. While orderability problems for {\em closed} manifolds have obvious answers on the level of Lie algebra of contact vector fields, the situation for {\em non-compact} manifolds   is quite subtle already on the Lie algebra level. Problems of this kind naturally arisen in   connection with constructions of contact structures in \cite{BEM}.
The goal of the paper is to   illustrate the arising phenomena  on a
restricted class of examples, leaving   a more general   study, both in the Lie algebra and the group cases,
to our forthcoming paper \cite{CEP2}.
\subsection{Groups of contactomorphisms and their Lie algebras}
Let   $(U,\xi)$ be a coorientable noncompact  contact manifold.  We
fix a contact form $\alpha$ for $\xi$ and denote by $R$ its   Reeb vector field. Let
$$
   G:=\Diff_c(U,\xi)
$$
be the identity component of the group of
contactomorphisms of $(U,\xi)$ with compact support.
The Lie algebra $\g$ of $G$, which consists of compactly supported
contact vector fields,  can be identified with the space
$C^\infty_c(U)$ of smooth functions with compact support by
associating to each function $K$ its {\em contact vector field}
$$
   Y_K=KR + Z_K,\qquad Z_K\in\xi,\quad (dK+i_{Z_K}d\alpha)|_\xi=0.
$$
Note that
$$
   dK(Z_K)=0,\qquad L_{Y_K}\alpha=dK(R)\alpha.
$$
Conversely, given a contact vector field $Y$ its contact Hamiltonian
is defined by the formula  $K(x)=\alpha(Y(x))$, $x\in U$. Let us
stress the point  that to identify the Lie algebra $\g$ with the
function space $C^\infty_c(U)$ one needs to fix a contact form.

The adjoint action of $\psi\in G$ on $K\in\g$ computes to
\begin{equation}\label{eq:ad-contact}
   \Ad_\psi K = (c_\psi K)\circ\psi^{-1},
\end{equation}
where $c_\psi:U\to\R$ is the positive function satisfying
$\psi^*\alpha = c_\psi\alpha$. The Lie bracket on $\g$ is given by
$$
   \{H,K\} = dK(X_H)-KdH(R).
$$
The Lie algebra carries a canonical partial order defined by $H\leq K$
if $H(x)\leq K(x)$ for all $x\in M$, which is $\Ad$-invariant by
equation~\eqref{eq:ad-contact}.

\subsection{Dominating positive cones}\label{sec:domin}
Denote by $\g^{\geq0}$ the cone in the Lie algebra
$\g\cong C^\infty_c(U)$ consisting of nonnegative functions.

\begin{definition}\label{def:dominating}
A subcone $\fc \subset \g^{\geq0}\setminus 0$ is called a {\em
  dominating (positive) cone} if the following hold:
\begin{enumerate}
\item $\fc$ is $\Ad$-invariant;
\item $\fc$ is relatively open in $\g^{\geq0}\setminus\{0\}$;
\item for each $H\in\g$ there exists $K\in\fc$ with $H\leq K$;
\item for all $H\in\g$, $K\in\fc$ there exist $t>0$ and $g\in G$ such
  that $t\Ad_gH\leq K$;
\item for each $H\in\g^{\geq0}\setminus\{0\}$ there exist $g_1,\dots,g_k\in G$ such that
  $\Ad_{g_1}H+\dots+\Ad_{g_k}H\in\fc$.
\end{enumerate}
\end{definition}

\begin{remark}
Property (v) is not needed in this paper, but will become relevant for
the discussion of partial orders on contactomorphism groups in~\cite{CEP2}.
\end{remark}

Clearly, if the manifold $U$ is closed then the only dominating cone
in $\g$ is the cone $\g^{>0}$ consisting of everywhere positive functions.
If $U$ is not closed, then a dominating cone in general  need not exist. For
instance, $S^1\times\R^2$ with the contact form $dt+\frac12(xdy-ydx)$ does not admit any
dominating cone because if $\supp(H)\supset S^1\times D_R$ and
$\supp(K)\subset S^1\times D_r$ with $r<R$, then there is no
contactomorphism $g\subset G$ such that $g(\supp(H))\subset \supp(K)$,
see~\cite{eliash-shapes}.

However, there is an important class of noncompact contact manifolds,
called {\em convex at infinity}, for which a dominating cone always
exists. We discuss this class in the next subsection.

\subsection{Contact manifolds convex at infinity}\label{sec:conv-inf}
A noncompact contact manifold $(U,\xi)$ is called {\em convex at infinity} if
there exists a contact embedding $\sigma:U\into U$ which is contactly
isotopic to the identity such that $\sigma(U)\Subset U$, i.e    $\sigma(U)$  has a compact closure in $U$. The space of
all embeddings $\sigma$ with this property will be denoted by
$$
   \EE=\EE(U,\xi).
$$
Note that by cutting off a contact isotopy, the
restriction $\sigma|_K$ to  any compact set $K\subset U$ can be
extended to a contactomorphism in the group $G=\Diff_c(U,\xi)$.

The notion of contact convexity for hypersurfaces  in a contact
manifolds was introduced in \cite{EG-convex} and studied in detail in
\cite{Giroux-convex}. Let us recall that a hypersurface in a contact
manifold is called {\em convex} if it admits a transverse contact vector field.
The coorientation of this vector field is irrelevant because if $Y$ is
contact then $-Y$ is contact as well.

\begin{example}\label{ex:convex-infinity}
\noindent (i) A major class of contact manifolds convex at infinity is
provided by interiors of compact manifolds with convex boundary.
Indeed, as the required embedding $\sigma$ one can take the flow for
small positive time of an inward pointing contact vector field
transverse to the boundary.

 \noindent (ii) More generally, suppose a contact manifold $(U,\xi)$
admits a (not necessarily complete) contact vector field $Y$ without
zeroes at infinity, which outside a compact set is gradient-like for
an exhausting function $\phi:U\to \R$. Then  $(U,\xi)$ is convex at
infinity.  Indeed, first  use \cite[Lemma 2.6]{Eliash-Weinstein-revisited} to conclude
that  for a sufficiently large $c$ the end  $(\{\phi\geq c\},\xi)$ is contactomorphic to $(\Sigma \times[0,\infty),\wh\xi)$ such that  the vector field $\frac{\p}{\p s}$ is contact. Here we set $\Sigma:=\{\phi=c\}$ and   denoted by $s$ the coordinate  corresponding to the second factor. There is a contact isotopy  $h_t:(\Sigma \times[0,\infty),\wh\xi)\to  (\Sigma \times[0,\infty),\wh\xi)$, $t\in[0,1]$, such that that $h_0=\id$, $h_t=\id$ near $\Sigma\times0$ for all $t\in[0,1]$ and $h_1(\Sigma\times n)=\Sigma\times\frac{n-1}n$, $n=1,\dots$, which implies that $h_1(\Sigma \times[0,\infty))=\Sigma\times[0,1)$.

 \noindent (iii)
In a $3$-dimensional contact manifold  a {\em generic surface is
  convex}, see \cite{Giroux-convex}, hence the interior of a
generic  connected compact contact manifold with non-empty boundary is
convex at infinity. If the boundary components of a 3-manifold are
2-spheres and the manifold is tight near the boundary, then it is
convex at infinity, even when the boundary components are not convex,
see \cite{eliash-20Martinet}.  

\noindent (iv) On the other hand, the contact manifold
$S^1\times \R^2=(\R/\Z)\times\R^2 $ with the tight contact form
$dt+\frac12(x\,dy-y\,dx)$ is not convex at infinity, see \cite{eliash-shapes}.
\end{example}

Now we will introduce our main example.
Let $\lambda_\st=\frac12\sum\limits_1^n(x_idy_i-y_idx_i)$ be the standard
Liouville form on $\R^{2n}$ with its  Liouville vector field
$$
   Z=\frac12\sum_{1}^n\Bigl(x_i\frac{\p}{\p x_i}+y_i\frac{\p}{\p y_i}\Bigr),
$$
and let $\alpha_\st=\lambda_\st|_{S^{2n-1}}$ be
the standard contact form on the unit sphere $S^{2n-1}\subset\R^{2n}$.
Let us order coordinates in $\R^{2n}$ as $(x_1,\dots, x_n,y_1,\dots,
y_n)$ and denote by $\Pi_{k}$   a $k$-dimensional coordinate subspace
of $\R^{2n}$ which is spanned by the last $k$   vectors of the basis. For instance,
$\Pi_1$ is the $y_n$-coordinate axis, while $\Pi_{2n-1}$ is the
hyperplane $\{x_1=0\}$. Note that $\Pi_k$ is isotropic when $k\leq n$,
and coisotropic otherwise. We denote by $\Pi_k^\perp$ the orthogonal
subspace spanned by the first $2n-k$ basic vectors.

\begin{lemma}\label{lem:main-ex}
(a) For each $k=1,\dots,2n-1$ the contact manifold $(S^{2n-1}\setminus
\Pi_k,\xi_\st)$ is convex at infinity. Moreover, it can be contracted
by an element of $\EE$ to an arbitrarily small neighborhood of the
equatorial sphere $S^{2n-1}\cap\Pi_k^\perp$.

(b) For $k\geq n$ the manifold $(S^{2n-1}\setminus\Pi_k,\xi_\st)$ is
contactomorphic to $J^1(S^{2n-k-1})\times \R^{2k-2n}=T^*(S^{2n-k-1}\times\R^{k-n})\times\R$.
\end{lemma}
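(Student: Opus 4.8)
Both statements become transparent once the round sphere is traded for a cylinder. Let $P^\perp,P\colon\R^{2n}\to\R^{2n}$ be the orthogonal projections onto $\Pi_k^\perp$ and $\Pi_k$, put $h(v)=|P^\perp v|^2$, and set $\Sigma_k:=\{h=1\}\subset\R^{2n}\setminus 0$; note $\Sigma_k\cong S(\Pi_k^\perp)\times\Pi_k$, a cylinder over the equatorial sphere $S(\Pi_k^\perp)=S^{2n-1}\cap\Pi_k^\perp$. One has $dh(Z)=h\equiv 1$ on $\Sigma_k$, so $\Sigma_k$ is transverse to the Liouville field $Z$ and radially graphical over $S^{2n-1}\setminus\Pi_k=\{v\in S^{2n-1}:h(v)>0\}$; since $L_Z\lambda_\st=\lambda_\st$ and $\lambda_\st(Z)=0$, flowing $\Sigma_k$ to $S^{2n-1}$ along $Z$ is a contactomorphism
\[
(S^{2n-1}\setminus\Pi_k,\xi_\st)\;\cong\;(\Sigma_k,\ker\alpha_\st),
\]
carrying $S^{2n-1}\cap\Pi_k^\perp$ to the zero section $E:=S(\Pi_k^\perp)\times\{0\}$. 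It thus suffices to work on $(\Sigma_k,\ker\alpha_\st)$.

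For part (b), assume $k\ge n$, so $\Pi_k^\perp$ is spanned by $x$–coordinates only. Write $u=(x_1,\dots,x_{2n-k})\in S(\Pi_k^\perp)$ and split the $\Pi_k$–coordinates into $w'=(y_1,\dots,y_{2n-k})$, the ones symplectically conjugate to $\Pi_k^\perp$, and $(\bar x,\bar y)=(x_{2n-k+1},\dots,x_n,y_{2n-k+1},\dots,y_n)$, the remaining fully interior symplectic coordinates. A direct computation gives, on $\Sigma_k$,
\[
\alpha_\st|_{\Sigma_k}=\tfrac12\bigl(u\cdot dw'-w'\cdot du\bigr)+\tfrac12\bigl(\bar x\cdot d\bar y-\bar y\cdot d\bar x\bigr),
\]
whose Reeb field is the fibrewise translation $w'\mapsto w'+2tu$ (all other coordinates fixed), a free and proper $\R$–action. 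On the section $W_0:=\{w'\cdot u=0\}\cong T^*S^{2n-k-1}\times\R^{2(k-n)}=T^*\bigl(S^{2n-k-1}\times\R^{k-n}\bigr)$ one finds, using $u\cdot dw'=-w'\cdot du$ and absorbing $\tfrac12 d(\bar x\cdot\bar y)$, that $\alpha_\st|_{W_0}=-\lambda_{\can}+(\text{exact})$. Flowing $W_0$ out by the Reeb field identifies $(\Sigma_k,\ker\alpha_\st)$ with $(W_0\times\R,\,dz+\alpha_\st|_{W_0})\cong(W_0\times\R,\,dz-\lambda_{\can})=J^1\bigl(S^{2n-k-1}\times\R^{k-n}\bigr)=J^1(S^{2n-k-1})\times\R^{2k-2n}$, which is part (b).

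For part (a) I would first establish convexity at infinity via Example~\ref{ex:convex-infinity}(ii): the function $\phi:=|Pv|^2$ is exhausting on $\Sigma_k$ and vanishes exactly on $E$, so one needs a contact vector field, nonvanishing near infinity, along which $\phi$ decreases outside a compact set. For $k\ge n$ such a field is the linear field
\[
V_k=-\!\!\sum_{i\le 2n-k}\! y_i\,\partial_{y_i}\;-\;\tfrac12\!\!\sum_{i>2n-k}\!\bigl(x_i\,\partial_{x_i}+y_i\,\partial_{y_i}\bigr),
\]
which satisfies $L_{V_k}\lambda_\st=-\lambda_\st$, is tangent to $\Sigma_k$ (its $x_i$–coefficients vanish for $i\le 2n-k$), is complete, and whose forward flow contracts $\Sigma_k$ onto $E$, with $\phi(\psi^{V_k}_t v)\le e^{-t}\phi(v)$; in the $J^1$–model of part (b) it is the weighted scaling of the cotangent fibres, the $z$–coordinate and the $\R^{2k-2n}$–factor. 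For $k\le n$, where $\Pi_k\cap S^{2n-1}$ is an isotropic sphere $\Lambda$, I would instead use the standard isotropic–neighbourhood normal form $J^1(\Lambda)\times\R^{2(n-k)}$ of $\Lambda$, in which the contact vector field of the Hamiltonian $z$ moves away from $\Lambda$ at a uniformly exponential rate and supplies the data for Example~\ref{ex:convex-infinity}(ii) with $\phi=1/(\text{squared distance to }\Lambda)$. In either case $(S^{2n-1}\setminus\Pi_k,\xi_\st)$ is convex at infinity.

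To contract $U=S^{2n-1}\setminus\Pi_k$ into an arbitrarily small neighbourhood of the equatorial sphere, pick $\sigma_1\in\EE$ produced by convexity at infinity, so $K:=\overline{\sigma_1(U)}$ is compact, hence contained in $\{\phi\le R\}$; then $\sigma:=\psi^{V_k}_T\circ\sigma_1$ is again in $\EE$ and has image in $\{\phi<\varepsilon\}$, a neighbourhood of $E$, for $T$ large, with $\varepsilon\to 0$ as $T\to\infty$. The step I expect to require real work is the uniform treatment for $k<n$: the naive linear fibre–contracting field on $\Sigma_k$ is \emph{not} conformally symplectic there, so one must either route the $\EE$–element through the normal form of $\Lambda$ and a \emph{separate} nonlinear squeezing of $\Sigma_k$ onto $E$, or construct such a complete squeezing field directly, and check it lands in every neighbourhood of $E$; a subordinate nuisance is the bookkeeping of symplectic pairings needed to normalise $\alpha_\st|_{\Sigma_k}$ in part (b) for a general coisotropic $\Pi_k$.
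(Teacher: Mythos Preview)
Your approach to (b) via the cylinder $\Sigma_k=\{|P^\perp v|^2=1\}$ is genuinely different from the paper's and arguably more direct: the paper argues by the Weinstein--Darboux isotropic neighbourhood theorem near $S^{2n-1}\cap\Pi_k^\perp$ and then extends the germ of the contactomorphism by matching trajectories of a contracting contact field, whereas you write down the global contactomorphism explicitly by identifying the Reeb flow on $\Sigma_k$ with fibrewise translation. Your route gives the identification in coordinates and avoids the normal--form machinery; the paper's route is softer but less explicit. For (a) with $k\ge n$ your conformally symplectic field $V_k$ tangent to $\Sigma_k$ is fine and plays the same role as the paper's field $Z_k$.

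The real gap is (a) for $k<n$, which you correctly flag as the hard step---but the paper's solution shows no nonlinear squeezing or isotropic normal form is needed. The point you are missing is that one should \emph{stay on $S^{2n-1}$} rather than pass to $\Sigma_k$: any \emph{linear} vector field on $\R^{2n}$ commutes with the radial Liouville field $Z$, and if it is in addition Hamiltonian it descends to a genuine contact vector field on the sphere. The quadratic Hamiltonian $\sum_{j=n-k+1}^n x_jy_j$ produces the hyperbolic linear field
\[
\widehat Y_k=\sum_{j=n-k+1}^n\Bigl(-x_j\,\partial_{x_j}+y_j\,\partial_{y_j}\Bigr),
\]
whose induced contact field $Y_k$ on $S^{2n-1}\setminus\Pi_k$ is gradient--like for the exhausting function $-\ln\bigl(1-\sum_{j>n-k}y_j^2\bigr)$ and whose backward flow contracts every compact set into any neighbourhood of $S^{2n-1}\cap\Pi_k^\perp$. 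This single linear field handles convexity at infinity and the contraction simultaneously. Your difficulty arose because $\widehat Y_k$ moves the $x_j$--coordinates for $j>n-k$ and hence is \emph{not} tangent to your cylinder $\Sigma_k$; working on $S^{2n-1}$ instead of $\Sigma_k$ dissolves the obstruction. (Conversely, your field $V_k$ is conformally but not genuinely symplectic, so it is tangent to $\Sigma_k$ but does not descend to $S^{2n-1}$ via the Hamiltonian correspondence---this is why the two models are suited to different ranges of $k$.)
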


\begin{proof}
Recall that $r=|x|^2+|y|^2$ induces the canonical isomorphism
$(\R^{2n}\setminus\{0\},\lambda_\st)\cong(\R_+\times
S^{2n-1},r\alpha_\st)$ under which the Liouville vector field
$Z$ corresponds to $r\frac{\p}{\p r}$. Thus contact vector fields on $S^{2n-1}$
are in one-to-one correspondence with Hamiltonian vector fields on
$\R^{2n}\setminus\{0\}$ which commute with $Z$. Note that each linear
vector field on $\R^{2n}$ automatically commutes with $Z$.

(a) First  consider  the case $k\leq n$. The linear vector field
$$
   \wh Y_k := \sum_{j=n-k+1}^n\Bigl(-x_j\frac{\p}{\p x_j}+y_j\frac{\p}{\p y_j}\Bigr)
$$
is the Hamiltonian vector field of the function
$\sum_{n-k+1}^nx_jy_j$. It commutes with $Z$, so it descends to a
contact vector field $Y_k$ to $S^{2n-1}$. On
$\R^{2n}\setminus(\Pi_k\cup\Pi_k^\perp)$ the field $\wh Y_k$ is
gradient-like for the $Z$-invariant function $-\ln
\Bigl(1-\frac{\sum_{n-k+1}^ny_j^2}{|x|^2+|y|^2}\Bigr)$, hence on
$S^{2n-1}\setminus(\Pi_k\cup\Pi_k^\perp)$ the field $Y_k$ has no
zeroes and is gradient-like for the exhausting function $-\ln
\Bigl(1-\sum_{n-m+1}^n y_j^2\Bigr)$. Now convexity at infinity
follows from Example~\ref{ex:convex-infinity}(ii), and the flow of $Y_k$
for very negative times contracts any compact set $S^{2n-1}\setminus \Pi_k$ to a
neighborhood of $S^{2n-1}\cap\Pi_k^\perp$.

The case $k>n$ follows from   part (b) and  Example~\ref{ex:convex-infinity}(ii).

(b) For $k \geq n$ let
$$
   \wh Z_k := \sum_{j=1}^{2n-k}\Bigl(x_j\frac{\p}{\p x_j}-y_j\frac{\p}{\p y_j}\Bigr)
$$
be  the Hamiltonian vector field of the function
$ -\sum\limits_{1}^{2n-k} x_jy_j$. It  descends to a complete contact vector field $Z_k$ on  $S^{2n-1}$. The flow of $Z_k$ contracts every compact set in $S^{2n-1}\setminus \Pi_k $ to a neighborhood of the isotropic sphere $S^{2k-1}\cap\Pi_k^\perp$ and the field $-Z_k$ is gradient like for the function $    \left(\sum_1^n y_j^2+\sum_{2n-k+1}^n x_j^2\right) $ on $S^{2n-1}\setminus  \Pi_k$.     By Weinstein-Darboux theorem contact structures
 on $S^{2n-1}\setminus \Pi_k $ and $J^1(S^{2n-k-1})\times \R^{k-n}$ are isomorphic on tubular neighborhoods of isotropic sphere  $S^{2n-k-1}=S^{2n-1}\cap\Pi_k^\perp$  and the $0$-section $S^{2n-k-1}\times 0\subset J^1(S^{2n-k-1})\times \R^{2k-2n}$. This isomorphism then extends to a contactomorphism between $S^{2n-1}\setminus \Pi_k $ and $J^1(S^{2n-k-1})\times \R^{2k-2n}$ by matching the corresponding trajectories of the contact  vector field
$-Z_k$  with trajectories of the canonical contact vector field on  $J^1(S^{2n-k-1})\times \R^{2k-2n}$ contracting this manifold to its $0$-section.

\end{proof}

\subsection{The maximal dominating cone $\g^+$}\label{sec:g+}
Let $(U,\xi)$ be a contact manifold convex at infinity.
\begin{lemma}\label{lem:dominating-cone}
For $(U,\xi)$ connected and convex at infinity the cone
$$
   \g^+ := \{H\in\g^{\geq0}\mid H|_{\sigma(U)}>0 \text{ for some}\; \sigma\in\EE(U,\xi)\}.
$$
is dominating and maximal, (i.e., all other dominating cones are subcones of $\g^+$).
\end{lemma}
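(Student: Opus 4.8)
The plan is to verify the five properties in Definition~\ref{def:dominating} for $\g^+$, then prove maximality. Properties (i) and (ii) are the easy structural checks; properties (iii)--(v) exploit that elements of $\EE(U,\xi)$ contract $U$ into arbitrarily small compact sets and, conversely, can be extended to genuine compactly supported contactomorphisms after cutting off. I would set up notation first: for $\sigma\in\EE$, write $\sigma_t$ for a contact isotopy from $\id$ to $\sigma$, recall from the text that $\sigma|_K$ for compact $K$ extends to some $g\in G$, and note that by iterating $\sigma$ one can arrange $\sigma(U)$ to lie in any prescribed open set containing a fixed point (or, more precisely, shrink any compact set into any neighborhood of the ``core'' of $U$).

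\smallskip

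\emph{Property (i), $\Ad$-invariance.} If $H\in\g^+$ with $H|_{\sigma(U)}>0$, then by \eqref{eq:ad-contact}, $\Ad_\psi H=(c_\psi H)\circ\psi^{-1}$, which is positive exactly on $\psi(\{H>0\})\supseteq \psi(\sigma(U))$. Since $\psi\sigma$ is again a contact embedding isotopic to the identity with relatively compact image, $\psi\sigma\in\EE$, so $\Ad_\psi H\in\g^+$. \emph{Property (ii), relative openness.} Suppose $H\in\g^+$ with $H|_{\overline{\sigma(U)}}\geq\delta>0$ on the compact set $\overline{\sigma(U)}$. Any $K\in\g^{\geq0}$ with $\|K-H\|_{C^0}<\delta$ then satisfies $K|_{\sigma(U)}>0$, hence $K\in\g^+$; so $\g^+$ is open in $\g^{\geq 0}$, and it clearly misses $0$. \emph{Property (iii), domination.} Given $H\in\g$, choose a compact set containing $\supp H$; pick any $\sigma\in\EE$ with $\sigma(U)$ containing that set is not what we want --- rather, pick $K\in C^\infty_c(U)$ with $K\geq H$ everywhere and $K>0$ on a neighborhood of $\supp H$, enlarged so that $K>0$ on $\sigma(U)$ for a suitably contracted $\sigma\in\EE$ (shrink $\sigma(U)$ inside $\{K>0\}$); then $K\in\g^+$ and $H\leq K$. \emph{Property (v)} is immediate: for $H\in\g^{\geq0}\setminus 0$, pick $x_0$ with $H(x_0)>0$; choose $\sigma\in\EE$ and finitely many $g_i\in G$ translating a small ball around $x_0$ so that the $g_i$-images cover $\overline{\sigma(U)}$ (possible since $\overline{\sigma(U)}$ is compact and $G$ acts with no invariant proper closed subsets through any point --- this follows because $G$ acts transitively enough, using that $U$ is connected and contact isotopies can move any point to any other). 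Then $\sum\Ad_{g_i}H>0$ on $\sigma(U)$.

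\smallskip

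\emph{Property (iv)}, the squeezing property, is the main obstacle and the heart of the lemma. Fix $H\in\g$ and $K\in\g^+$, say $K|_{\overline{\sigma(U)}}\geq\delta>0$. I want $t>0$ and $g\in G$ with $t\,\Ad_g H\leq K$, i.e. $t(c_g H)\circ g^{-1}\leq K$ pointwise. The idea: use an element of $\EE$ to squeeze $\supp H$ \emph{inside} $\sigma(U)$, extend that embedding to $g^{-1}\in G$, so that $\Ad_g H=(c_g H)\circ g^{-1}$ is supported in $\sigma(U)$; on the compact support, $(c_g H)\circ g^{-1}$ is bounded while $K\geq\delta>0$, so choosing $t$ small enough gives $t\,\Ad_g H\leq\delta\leq K$ on $\sigma(U)$ and $t\,\Ad_g H=0\leq K$ elsewhere. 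The delicate points are: (1) producing $g\in G$ whose restriction realizes the contraction of $\supp H$ into $\sigma(U)$ --- this uses that $U$ is convex at infinity (iterate $\sigma$ to contract, then cut off the isotopy), together with connectedness to also translate the contracted support into the region where $K$ is bounded below; (2) controlling the conformal factor $c_g$ on the (compact) support of $\Ad_g H$, which is automatic since $c_g$ is continuous and positive, hence bounded there.

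\smallskip

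\emph{Maximality.} Let $\fc$ be any dominating cone and $H\in\fc$; we must show $H\in\g^+$, i.e. $H>0$ on $\sigma(U)$ for some $\sigma\in\EE$. Suppose not. Using convexity at infinity, fix $\sigma\in\EE$ and consider the compact set $Q=\overline{\sigma(U)}$. Pick $K\in\g$ with $K>0$ somewhere and yet, by property (iv) applied to the pair $(K,H)$, there exist $t>0$, $g\in G$ with $t\,\Ad_g K\leq H$. Since $\Ad_g K$ is again positive on some set and can be arranged (property~(i), $\Ad$-invariance of $\fc$, is not what bounds $H$ --- rather we use that $H\geq t\,\Ad_g K\geq 0$ forces $H$ to be positive wherever $\Ad_g K$ is). The cleaner route: if $H\in\fc$ but $H$ vanishes on $\sigma(U)$ for \emph{every} $\sigma\in\EE$, take $K\in\fc\subseteq\g^{\geq0}$ with, by property (iv), $t\Ad_g K\le H$ for suitable $t,g$; but choosing $K$ with $K>0$ on $\overline{\sigma_0(U)}$ for a fixed $\sigma_0$ and running the squeezing of property~(iv) \emph{backwards} produces a point where $\Ad_g K>0$ while $H=0$, contradicting $t\,\Ad_g K\le H$. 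Hence every element of $\fc$ is positive on some $\sigma(U)$, i.e. $\fc\subseteq\g^+$. I expect the write-up of maximality to require care in quantifier order (``for every $\sigma$'' vs.\ ``for some $\sigma$''), and the honest argument likely invokes property~(iv) for $\g^+$ itself to manufacture the contradiction.
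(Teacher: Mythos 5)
Your verification of properties (i)--(v) is essentially the paper's argument: (i)--(iii) are the routine checks, (iv) is exactly the paper's proof (cut off the isotopy to $\sigma$ outside the compact set $C=\supp H$ to get $g\in G$ with $g|_C=\sigma|_C$, so that $\supp(\Ad_gH)=\sigma(C)\subset\sigma(U)$ where $K$ has a positive lower bound, then shrink $t$ --- modulo your harmless $g$ versus $g^{-1}$ slip), and (v) is the same covering-by-translates argument using transitivity of $G$ and compactness of $\overline{\sigma(U)}$.

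The one genuine defect is in your maximality step. In the ``cleaner route'' you take $K\in\fc$ \emph{and} $K>0$ on $\overline{\sigma_0(U)}$; but the existence of an element of $\fc$ that is positive on the image of some $\sigma_0\in\EE$ is exactly the statement $\fc\cap\g^+\neq\emptyset$, which is (essentially) what you are trying to prove, so as written the argument is circular. The repair is small and is what the paper does: property (iv) of the cone $\fc$ quantifies over \emph{all} elements of $\g$ in the first slot, so you may take any $F\in\g^+$ (e.g.\ a bump function positive on $\overline{\sigma_0(U)}$, with no requirement that $F\in\fc$) and apply (iv) with $F$ in the $\g$-slot and $H\in\fc$ in the $\fc$-slot to obtain $t\Ad_gF\leq H$; since $\Ad_gF>0$ on $g(\sigma_0(U))=(g\circ\sigma_0)(U)$ and $g\circ\sigma_0\in\EE$, this directly gives $H\in\g^+$ --- no contradiction argument and no delicate quantifier bookkeeping needed. (The paper routes this through an intermediate $K\in\fc$ with $F\leq K$ supplied by property (iii), but the direct application of (iv) to $F$ already suffices.)
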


\begin{proof}
Properties (i), (ii) and (iii) in Definition~\ref{def:dominating} are clear.

For (iv), consider $H\in\g$, $K\in\g^+$. Then $C:=\supp (H)$ is compact and $K$
is positive on $\sigma(U)$ for some $\sigma\in\EE(U,\xi)$.
By cutting off the contact isotopy from the identity to $\sigma$
outside $C$ we find $g\in G$ with $g|_C=\sigma|_C$. Then
$\supp(\Ad_gH)=g(\supp (H))=\sigma(C)\subset\sigma(U)$. Since
$K|_{\sigma(U)}>0$, it follows that $t\Ad_gH\leq K$ for $t$
sufficiently small.

For (v), let $H\in\g^{\geq0}\setminus\{0\}$ be strictly positive on some open set
$V\subset U$. Pick any $\sigma\in\EE(U,\xi)$. Since the group $G$ acts transitively on $U$
and $\sigma(U)$ is relatively compact, there exist $g_1,\dots,g_k\in
G$ such that $\sigma(U)\subset g_1(V)\cup\cdots\cup g_k(V)$. Then
$\Ad_{g_i}H$ is nonnegative and strictly positive on $g_i(V)$,
hence $\Ad_{g_1}H+\dots+\Ad_{g_k}H$ is strictly positive on $\sigma(U)$
and therefore belongs to $\g^+$.

To prove maximality of $\g^+$, let $\fc$ be any other dominating cone.
Take any  $H\in\fc$ and $F\in\g^+$. Then by
Definition~\ref{def:dominating} there exists $K\in\fc$ such that $F\leq K$,
and there exist $g\in G$ and $t>0$ such that $t\Ad_gK\leq H$. It
follows that $t\Ad_gF\leq H$. Since $F\in\g^+$, this implies that
$H$ is positive on $\sigma(U)$ for some $\sigma\in\EE$, and therefore $H\in\g^+$.
\end{proof}

\begin{lemma}[Examples of maximal dominating cones]\label{lm:convex-domin}
\hfill

(i) For the standard contact structure on $\R^{2n+1}$ we have $\g^+=\g^{\geq0}\setminus 0$.

(ii) For the $1$-jet space $U=J^1(M)$ of a closed manifold $M$
  endowed with its standard contact structure, the maximal dominating cone
  $\g^+$ consists of all nonnegative functions whose support contains a
  neighborhood of a Legendrian submanifold isotopic to the zero section.

(iii) For $(S^{2n-1}\setminus \Pi_k,\xi_\st)$ as in Example (iii) in
  Section \ref{sec:conv-inf}, the cone $\g^+$ consists of all nonnegative
  functions which are positive on an image of the equatorial
  sphere $S^{2n-1}\cap\Pi_k^\perp$ under a contactomorphism isotopic
  to the identity.

(iv) In the special case  $(S^{3}\setminus \Pi_1,\xi_\st)$, which is
  the same as $\R^3\setminus 0$  with the standard contact structure
  inherited from $\R^3$, the cone $\g^+$ can also be characterized as
  consisting of all nonnegative functions whose support contains a
 neighborhood  homologically non-trivial $2$-sphere.
\end{lemma}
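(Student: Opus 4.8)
The plan is to treat the four cases separately, with case~(iii) doing most of the work and the others being either special cases or minor variations. In all cases the strategy is the same: we know from Lemma~\ref{lem:dominating-cone} that $\g^+$ is the maximal dominating cone, so we only need to give a concrete geometric description of the set $\EE(U,\xi)$ of ``contractions'' — more precisely, of which compact subsets of $U$ can be swept into the complement of a given support by an element of $\EE$, equivalently by a contactomorphism isotopic to the identity. The definition of $\g^+$ then translates directly into the stated support conditions. The key input is that a nonnegative $H$ lies in $\g^+$ iff there is some $\sigma\in\EE$ with $\sigma(U)$ contained in the open set $\{H>0\}$; and by cutting off the ambient isotopy outside a large compact set (exactly as in the proof of Lemma~\ref{lem:dominating-cone}(iv)), ``$\sigma(U)\subset\{H>0\}$ for some $\sigma\in\EE$'' is equivalent to ``$g(\text{some prescribed core submanifold})\subset\{H>0\}$ for some $g\in G$''.

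For (i), $\R^{2n+1}$ with its standard contact structure is the interior of a ball with convex boundary (Example~\ref{ex:convex-infinity}(i)), and the radial contraction toward the origin shows that \emph{any} compact set can be swept to an arbitrarily small neighborhood of a point; since any nonzero nonnegative $H$ is positive somewhere, we can translate that point into $\{H>0\}$, so every $H\in\g^{\geq0}\setminus0$ lies in $\g^+$. For (ii), $J^1(M)$ carries the contracting contact vector field coming from the scaling in the cotangent and $\R$-directions; its flow for large negative time contracts any compact set into any prescribed neighborhood of the zero section $M\subset J^1(M)$. Hence $H\in\g^+$ iff $\{H>0\}$ contains $g(M)$ for some $g\in G$, i.e.\ a neighborhood of a Legendrian isotopic to the zero section — one inclusion is the contraction just described, the other follows because if $\{H>0\}$ misses every Legendrian isotopic to the zero section then in particular it misses $\sigma(U)\supset\sigma(\text{zero section})$ for every $\sigma\in\EE$, since $\sigma$ is contact-isotopic to the identity. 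For (iii), we invoke Lemma~\ref{lem:main-ex}(a): the flow of $Y_k$ (resp.\ $Z_k$) for very negative time contracts any compact subset of $S^{2n-1}\setminus\Pi_k$ into an arbitrarily small neighborhood of the equatorial sphere $S^{2n-1}\cap\Pi_k^\perp$, and this equatorial sphere is an embedded submanifold contained in $\sigma(U)$ for the corresponding $\sigma\in\EE$; conversely any $\sigma\in\EE$ has image containing $\sigma$ applied to that sphere, which is a contactomorphic image isotopic to the identity. This gives exactly the stated description. For (iv) we must additionally identify, in $(S^3\setminus\Pi_1,\xi_\st)\cong(\R^3\setminus0,\xi_\st)$, the $2$-spheres that arise as $g(S^2)$, $g\in G$, with images of the standard unit sphere: here $S^3\setminus\Pi_1$ deformation retracts onto $S^2$, so $H_2(U;\Z)\cong\Z$, and a compact set can be contracted into $\{H>0\}$ iff $\{H>0\}$ contains a $2$-cycle generating $H_2$; the point is that any smoothly embedded $2$-sphere representing a generator of $H_2(\R^3\setminus0)$ bounds, on its ``inner'' side, a region containing $0$, and the isotopy extension theorem lets us move the standard sphere onto it by an ambient contact isotopy (after a preliminary $C^0$-small smoothing/normal-form reduction), so the homological condition is both necessary and sufficient.

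The main obstacle is the last point in~(iii) and~(iv): showing that the support condition is not merely \emph{necessary} but \emph{sufficient} — i.e.\ that whenever $\{H>0\}$ contains a submanifold of the right isotopy type, one can actually produce $\sigma\in\EE$ with $\sigma(U)\subset\{H>0\}$. The subtlety is that $\sigma(U)$ is a full neighborhood-of-infinity's worth of the manifold contracted, not just the core sphere, so one needs the contracting flow to carry \emph{all} of $U$ (not just a compact piece) into an arbitrarily thin neighborhood of that sphere, and then use that the core sphere sitting inside the open set $\{H>0\}$ has an entire such thin neighborhood inside $\{H>0\}$ as well. This is where the completeness/gradient-like structure of $Y_k$, $Z_k$ from Lemma~\ref{lem:main-ex} and the collar description of the convex end from Example~\ref{ex:convex-infinity}(ii) are essential, together with the isotopy extension theorem to conjugate an arbitrary isotopic copy of the core sphere back to the standard one; I expect the argument to be a careful but routine assembly of these ingredients rather than to require a new idea.
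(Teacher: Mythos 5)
Your overall strategy coincides with the paper's: in each case one exhibits an element of $\EE$ contracting $U$ into an arbitrarily small neighborhood of a ``core'' (a point, the zero section, the equatorial sphere $S^{2n-1}\cap\Pi_k^\perp$), uses the transitivity of $G$ on images of that core to obtain sufficiency of the stated support condition, and obtains necessity from the fact that $\sigma(U)$ always contains $\sigma(\mathrm{core})$. Parts (i)--(iii) are correct as you describe them, and you rightly flag --- and correctly resolve via the collar structure of Example~\ref{ex:convex-infinity}(ii) and Lemma~\ref{lem:main-ex} --- the point that one must contract all of $U$ into a thin neighborhood of the core, not just a compact piece.

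The gap is in (iv). You assert that once an embedded $2$-sphere $\Sigma\subset\R^3\setminus 0$ is known to be smoothly isotopic to the standard sphere, ``the isotopy extension theorem lets us move the standard sphere onto it by an ambient contact isotopy (after a preliminary $C^0$-small smoothing/normal-form reduction).'' The isotopy extension theorem, smooth or contact, does not deliver this: a smooth isotopy of hypersurfaces extends only to a smooth ambient isotopy, and the contact version applies to isotopies that are already contact in the relevant sense. Two smoothly isotopic surfaces in a contact $3$-manifold are in general \emph{not} contactly isotopic --- characteristic foliations and dividing sets obstruct this --- so a genuinely contact-topological input is required at exactly this step. What is needed, and what the paper invokes, is Eliashberg's theorem that in a \emph{tight} contact $3$-manifold any two smoothly isotopic $2$-spheres can be $C^0$-approximated by contactly isotopic ones \cite{eliash-20Martinet}; one then approximates $\Sigma$ by such a sphere $\Sigma'$ still contained in the open set where $H>0$ and reduces to case (iii). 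Your parenthetical ``normal-form reduction'' gestures at something of this kind but does not identify it, and in particular the essential role of tightness of $\xi_\st$ on $\R^3\setminus 0$ is absent from your argument.
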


\begin{proof} Both contact manifolds in (i) and (ii) admit  complete contact  vector fields which contract every compact subset to an arbitrarily small neighborhood of the origin in  case (i), and
 to an arbitrarily small  neighborhood of the
zero section in case (ii).  For $(\R^{2n-1},dt+\sum\limits_1^{n-1}(x_jdy_j-y_jdx_j))$ this is the vector field $-2\frac{\p}{\p t}-\sum\limits_1^{n-1}\left(x_j\frac{\p}{\p x_j}+y_j\frac{\p}{\p y_j}\right)$, and for $(J^1(M), dz+pdq) $ this is the vector field
$-\frac{\p}{\p z}-p\frac{\p}{\p p}$.
But the group $G$ acts transitively on points and
on Legendrian submanifolds isotopic to the zero section, respectively.
In (iii), according to Lemma~\ref{lem:main-ex} the space
$S^{2n-1}\setminus \Pi_k$ can be contracted by an element in $\EE$
to a neighborhood of the equatorial sphere $S^{2n-1}\cap\Pi_k^\perp$.
For (iv), we note in addition that any two smoothly isotopic $2$-spheres in a
tight contact manifold can be $C^0$-approximated by spheres which are
contactly isotopic, see \cite{eliash-20Martinet}.
\end{proof}

\begin{remark}
If $U$ contains a compact subset which is not  contractible in $U$, then the cone $\g^+$
never coincides with $\g^{\geq0}\setminus\{0\}$. To see this, pick
$H,K\in\g^{\geq0}\setminus\{0\}$ such that $\supp (H)$ is
noncontractible in $U$
and $\supp (K)$ is contractible in $U$. Suppose there exists
$g\in G$ and $t>0$ with $t\Ad_g H\leq K$. Then we must have
$g(\supp (H))\subset \supp (K)$, which is impossible if $\supp (H)$ is
noncontractible and $\supp (K)$ is contractible in $U$.
\end{remark}

\subsection{Partial order on $\g^+$ up to conjugation}\label{sec:partial-order}
Let us denote  by $\Theta:=\g^+/\sim$ the quotient space of $\g^+$ by the
adjoint action of $G$ on $\g$. The partial order $H\leq K$ on $\g^+$
descends to a possibly degenerate partial order $\preceq$ on
$\Theta$ defined on $h,k\in\Theta$ by
$$
   h\preceq k :\Longleftrightarrow \hbox {there exists}\; H\in h, K\in k\;\hbox{such that}\;
   H\leq K.
$$

\begin{lemma}\label{lem:non-orderable-g}
The following are equivalent:
\begin{enumerate}[(a)]
\item there exists $H\in\g^+$ and $g\in G$ such that $\Ad_gH\leq sH$ for some $0<s<1$;
\item for all $K_1,K_2\in\g^+$ there exists $h\in G$ such that
  $\Ad_hK_1\leq K_2$.
\end{enumerate}
\end{lemma}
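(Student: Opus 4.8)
The plan is to prove the two implications (b)$\Rightarrow$(a) and (a)$\Rightarrow$(b) separately, with the second being the substantive direction.

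For (b)$\Rightarrow$(a): fix any $H\in\g^+$ and set $K_1:=H$, $K_2:=\frac12 H$. Both lie in $\g^+$ since $\g^+$ is a cone and scaling by a positive constant preserves the defining condition (positivity on $\sigma(U)$ for some $\sigma\in\EE$). Applying (b) produces $h\in G$ with $\Ad_hH\leq\frac12 H$, which is exactly (a) with $s=\frac12$. So this direction is immediate.

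For (a)$\Rightarrow$(b): the idea is that a single strict self-contraction, iterated, produces arbitrarily strong self-contractions, and these can then be combined with the domination properties of $\g^+$ to compare any two elements. Suppose $\Ad_{g}H\leq sH$ with $0<s<1$. First I would iterate: applying $\Ad_g$ repeatedly and using $\Ad$-invariance of the order (equation~\eqref{eq:ad-contact}), one gets $\Ad_{g^m}H\leq s^m H$ for all $m\geq 1$, so $H$ can be conjugated to be pointwise as small as we like relative to itself. Now take arbitrary $K_1,K_2\in\g^+$. By property (iv) of Definition~\ref{def:dominating} applied with the element $H\in\g^+$, there exist $t_1>0$ and $a\in G$ with $t_1\Ad_a K_1\leq H$; equivalently $\Ad_a K_1\leq t_1^{-1}H$. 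Separately, since $K_2\in\g^+=\fc$ is itself a dominating cone element, property (iv) again (now with roles swapped — using that $H\in\g^{\geq 0}$ and $K_2\in\g^+$) gives $t_2>0$ and $b\in G$ with $t_2\Ad_b H\leq K_2$, i.e.\ $\Ad_b H\leq t_2^{-1}\cdot(\text{something})$; more precisely we get $\Ad_b H\leq c\,K_2$ is the wrong direction, so instead I use: there exist $t_2>0,\ b\in G$ with $t_2\,\Ad_b H\le K_2$. Choosing $m$ large enough that $t_1^{-1}s^m\leq t_2$ — possible since $s<1$ — we chain the estimates:
\[
   \Ad_{a}K_1 \;\le\; t_1^{-1}H,\qquad
   t_1^{-1}\,\Ad_{g^m}H \;\le\; t_1^{-1}s^m H \;\le\; t_2 H,
\]
wait — I need to conjugate consistently. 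The clean way: from $\Ad_a K_1\le t_1^{-1}H$, apply $\Ad_{g^m}$ to both sides (order-preserving) to get $\Ad_{g^m a}K_1\le t_1^{-1}\Ad_{g^m}H\le t_1^{-1}s^m H\le t_2 H$ once $m$ is large. Then apply $\Ad_b$: $\Ad_{b g^m a}K_1\le t_2\,\Ad_b H\le K_2$. Setting $h:=bg^m a$ gives $\Ad_h K_1\le K_2$, which is (b).

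The main obstacle is bookkeeping the two applications of Definition~\ref{def:dominating}(iv) in the correct directions and verifying that all the constants can be made to chain — in particular that $\g^+$ is closed under positive rescaling (so that the elements $t_i^{-1}H$ etc.\ may be replaced by honest comparisons) and that one genuinely may apply property (iv) both "to bound $K_1$ above by a multiple of $H$" and "to bound a multiple of $H$ above by $K_2$". The first is property (iv) with the pair $(K_1,\,?)$ — but (iv) requires the \emph{second} argument to be in $\fc=\g^+$, so one applies it as: for $K_1\in\g$ and $H\in\g^+$ there are $t>0,g\in G$ with $t\Ad_g K_1\le H$. The second is (iv) with $H\in\g^{\geq 0}\subset\g$ (viewing $H$ merely as an element of the Lie algebra, not using $H\in\g^+$ there) and $K_2\in\g^+$, yielding $t>0, g\in G$ with $t\Ad_g H\le K_2$. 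Once these are correctly invoked, the iteration of the self-contraction closes the argument with no further geometric input.
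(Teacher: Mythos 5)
Your proof is correct and follows essentially the same route as the paper's: iterate $\Ad_gH\leq sH$ to get $\Ad_{g^m}H\leq s^mH$, sandwich $K_1$ and $K_2$ against $H$ via two applications of Definition~\ref{def:dominating}(iv), and choose $m$ large enough to absorb the constants. The only (cosmetic) difference is that you apply (iv) to obtain $t_2\Ad_bH\leq K_2$ directly, whereas the paper obtains $t_2\Ad_{h_2}K_2\leq H$ and then applies $\Ad_{h_2}^{-1}$ at the end; the resulting conjugating elements $h=bg^ma$ and $h=h_2^{-1}g^Nh_1$ play identical roles.
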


\begin{proof}
Clearly (b) implies (a). Conversely, suppose that (a) holds for
elements $H$, $g$ and let
$K_1,K_2\in\g^+$ be given. By Definition~\ref{def:dominating} there
exist $t_i>0$ and $h_i\in G$ such that
$$
   t_1\Ad_{h_1}K_1\leq H\leq\frac{1}{t_2}\Ad_{h_2}K_2.
$$
Applying $\Ad_g^N$
for some $N\in\N$ to these inequalities, we obtain
$$
   t_1\Ad_g^N\Ad_{h_1}K_1\leq\Ad_g^NH\leq s^NH\leq\frac{s^N}{t_2}\Ad_{h_2}K_2.
$$
Applying $\Ad_{h_2}^{-1}$ to both sides and dividing by $t_1$, we obtain
$$
   \Ad_{h_2}^{-1}\Ad_g^N\Ad_{h_1}K_1\leq \frac{s^N}{t_1t_2}K_2.
$$
Hence $\Ad_hK_1\leq K_2$ with $h:=h_2^{-1}g^Nh_1$, provided that
$N$ is chosen so large that $s^N\leq t_1t_2$.
\end{proof}

We call the positive cone $\g^+$ {\em non-orderable up to
  conjugation} if the equivalent conditions in
Lemma~\ref{lem:non-orderable-g} hold, and {\em orderable up to conjugation}
otherwise. Thus to prove orderability up to conjugation of $\g^+$, it
suffices to find {\em some} pair $K_1,K_2\in\g^+$ for which there
exists no $h\in G$ with $\Ad_hK_1\leq K_2$.

\begin{remark}
a) Even if  $\g^+$ is orderable up to conjugation this does not imply
that the induced binary relation on $\Theta$ is a genuine
order. However, we do not know any counterexamples to this
implication. We will discuss the arising structures in more detail  in
Section~\ref{section-geometry} below.

\noindent b) If the manifold $U$ is closed then the cone $\g^+$ is always orderable up to
conjugation for the following trivial reason: the volume integral
$$
   I(H) := \int\limits_U \Bigl(\frac{\alpha}{H}\Bigr)\wedge
   d\Bigl(\frac{\alpha}{H}\Bigr)^{n-1}
$$
satisfies $I(\Ad_gH)=I(H)$ for all $g\in G$, so one can never have
$\Ad_gH\leq sH$ for some $0<s<1$. Note that   the   {\em strict order $H>G$} does descend in the case  of a closed $U$ to a genuine order on $\Theta$, as it follows from the same preservation of volume argument.
\end{remark}

\begin{proposition}\label{prop:non-order}
(a) If $(U,\xi)$ is the standard contact $\R^{2n+1}$ or $J^1(M)$,
as in Lemma~\ref{lm:convex-domin} (i) and (ii), then $\g^+$ is
non-orderable up to conjugation.

(b) More generally, let $(V,\lambda)$ be the completion of a Liouville
 domain (see \cite{CE12}). Then for its contactization
$\bigl(U=V\times\R,\ker(\lambda+dt)\bigr)$ the maximal dominating cone
$\g^+(U,\xi)$ is non-orderable up to conjugation.
\end{proposition}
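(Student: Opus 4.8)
The plan is to verify condition (a) of Lemma~\ref{lem:non-orderable-g}: exhibit $H\in\g^+$ and $g\in G$ with $\Ad_g H\leq sH$ for some $0<s<1$. Part (a) is the special case $V=\R^{2n}$ or $V=T^*M$, so it suffices to treat (b). The key geometric input is that the contactization $U=V\times\R$ carries a one-parameter family of \emph{contractions}: if $Z$ is the Liouville vector field on the completion $(V,\lambda)$, its time-$T$ flow $\phi^T_Z$ satisfies $(\phi^T_Z)^*\lambda=e^T\lambda$, and on $U$ the map $\Psi_T(v,t):=(\phi^{-T}_Z(v),\,e^{-T}t)$ pulls back $\lambda+dt$ to $e^{-T}(\lambda+dt)$, hence is a (noncompactly supported) contactomorphism with conformal factor $c_{\Psi_T}\equiv e^{-T}$. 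For $T>0$ this $\Psi_T$ shrinks $V\times\R$ toward the ``core'' $\{$skeleton of $V\}\times\{0\}$; in particular it maps any compact set into an arbitrarily small neighborhood of the core.

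Next I would build $H$ and $g$. Fix a compact neighborhood $C_0$ of the core of $V$ times an interval in the $t$-factor, and choose $H\in\g^+$ supported in a larger compact set $C_1\supset\Psi_{-1}(C_0)$, with $H>0$ on $C_0$ (possible since any neighborhood of the core contains $\sigma(U)$ for suitable $\sigma\in\EE$, so such $H$ lies in $\g^+$); arrange moreover that $H$ depends on $(v,t)$ only through coordinates on which $\Psi_1$ acts by the stated linear scaling, so that the behavior of $H\circ\Psi_1^{-1}$ is controlled. Because $\Psi_1(C_1)\Subset C_0$, the restriction $\Psi_1|_{C_1}$ extends (by cutting off the contact isotopy $t\mapsto\Psi_t$ outside $C_1$, using the end of Section~\ref{sec:conv-inf}) to a compactly supported $g\in G$ with $g|_{C_1}=\Psi_1|_{C_1}$. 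Then by \eqref{eq:ad-contact}, $\Ad_g H=(c_g\cdot H)\circ g^{-1}$, and on the relevant region $c_g=c_{\Psi_1}=e^{-1}$, so $\Ad_g H=e^{-1}\,(H\circ\Psi_1^{-1})$, a nonnegative function supported in $\Psi_1(C_1)\subset C_0$.

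It remains to check $\Ad_g H\le sH$ on $C_0$ for some $s<1$. Since $\Ad_g H=e^{-1}(H\circ\Psi_1^{-1})$ and $H\circ\Psi_1^{-1}$ is supported in $C_0$ where $H>0$, both sides are continuous with compact support, $H$ is bounded below by a positive constant on the compact set $\supp(H\circ\Psi_1^{-1})\subset C_0$, and $H\circ\Psi_1^{-1}$ is bounded above; hence there is a constant $s_0$ with $H\circ\Psi_1^{-1}\le s_0 H$ on $U$, giving $\Ad_g H\le e^{-1}s_0\,H$. If one is careful to choose $H$ so that $\sup H=\sup(H\circ\Psi_1^{-1})$ and the sup of $H\circ\Psi_1^{-1}$ is attained where $H$ is near its max (e.g.\ take $H$ a function of a single exhausting ``radial'' coordinate, monotone, so that composing with the contraction only decreases it pointwise on the overlap), one gets $H\circ\Psi_1^{-1}\le H$, hence $\Ad_g H\le e^{-1}H$ and we may take $s=e^{-1}<1$. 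The main obstacle is precisely this last point: ensuring the pointwise inequality holds on \emph{all} of $U$ and not merely up to a constant that might exceed $e$; this is where the explicit product/scaling structure of the contactization and a judicious monotone choice of $H$ adapted to the flow $\Psi_t$ are essential, and where the argument for general Liouville completions needs slightly more care than in the model cases $\R^{2n+1}$ and $J^1(M)$ of Lemma~\ref{lm:convex-domin}.
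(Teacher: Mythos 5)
Your approach is essentially the paper's: both proofs exploit the contact isotopy $\psi_s(v,t)=(\phi_Z^{s}(v),e^{s}t)$ of the contactization, whose conformal factor $e^{s}$ tends to $0$ as $s\to-\infty$ while the support of the transported Hamiltonian is squeezed into a fixed compact neighborhood of the skeleton where the target function is positive. The only difference is that the paper verifies condition (b) of Lemma~\ref{lem:non-orderable-g} directly (given $K_1\geq K_2$ in $\g^+$, conjugate $K_2$ so that its support covers a neighborhood of the attractor $C$, then push $K_1$ by $\psi_s$ for $-s$ large), whereas you verify condition (a); by the lemma these are interchangeable.

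The one place your writeup does not close is the final constant, and the obstacle you flag is not where the real fix lies. You do not need $H\circ\Psi_1^{-1}\leq H$ pointwise, nor any ``judicious monotone choice'' of $H$: keep $H$ arbitrary with $H\geq m>0$ on $C_0$ and simply replace $\Psi_1$ by $\Psi_T$ for large $T$. Since $\Psi_T(\supp H)\subset C_0$ for all $T\geq 1$, you get
$$
\Ad_g H \;=\; e^{-T}\,(H\circ\Psi_T^{-1}) \;\leq\; e^{-T}\,\frac{\max H}{m}\,H ,
$$
and $e^{-T}\max H/m<1$ once $T>\log(\max H/m)$. This is exactly the mechanism the paper uses (letting $-s\to\infty$ so that $e^{s}\max K_1$ drops below the positive lower bound of $\Ad_hK_2$ near $C$). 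With that substitution your argument is complete; one should also note, as you implicitly do and as the paper leaves tacit, that $\Psi_T$ must be cut off outside a compact set containing $\supp H$ to land in $G=\Diff_c$, which is legitimate since $\Ad$ only sees $g$ on $\supp H$.
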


\begin{proof}
Since (a) follows from (b), it suffices to prove (b).
The Liouville flow $\phi_s$ on $V$ induces a contact  diffeotopy   $\psi_s(x,t)=(\phi_s(x),e^st)$ of $V\times\R$
satisfying $\psi_s^*(\lambda+dt)=e^s(\lambda+dt)$. Let $C\subset V\times\R$ be the attractor of the flow $\psi_s$ when $s\to-\infty$.  Take $K_1,K_2\in\g^+$,  $K_1\geq K_2$. By the definition of the cone $\g^+$
there exists a   contacomorphism $h \in G$ such that $\Supp(\Ad_h K_2)$ contains a neighborhood of $C$.  The flow $\psi_s$ when $s\to-\infty$ moves $\Supp(K_1)$ into an arbitrarily small neighborhood of $C$, and hence for sufficiently large $-s$
we have $\Supp(\Ad_{\psi_s}(K_1)=\Supp(K_1\circ\psi_s^{-1})\subset \Supp(\Ad_h K_2)$. Therefore,
  $(\Ad_{\psi_s}\wt K_1)(x)=e^sK_1(\psi_s^{-1}(x))\leq \Ad_hK_2(x)$ for  for sufficiently large $-s$, or
   $(\Ad_{h^{-1}\circ\psi_s} K_1)(x) \leq  K_2(x)$, which means that     $\g^+$ satisfies condition (b) in
Lemma~\ref{lem:non-orderable-g}.
\end{proof}

Proposition~\ref{prop:non-order}(b) combined with
Lemma~\ref{lem:main-ex}(b) yields

\begin{cor}
If $k\geq n$, then for $(S^{2n-1}\setminus\Pi_k,\xi_\st)$ from
Section~\ref{sec:conv-inf} the cone $\g^+$ is
non-orderable up to conjugation.
\hfill$\square$
\end{cor}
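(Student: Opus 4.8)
The plan is to combine the two earlier results directly. By Lemma~\ref{lem:main-ex}(b), for $k\geq n$ the contact manifold $(S^{2n-1}\setminus\Pi_k,\xi_\st)$ is contactomorphic to
$$
   J^1(S^{2n-k-1})\times\R^{2k-2n} = T^*(S^{2n-k-1}\times\R^{k-n})\times\R,
$$
and we want to exhibit this as the contactization of the completion of a Liouville domain so that Proposition~\ref{prop:non-order}(b) applies. First I would observe that $T^*(S^{2n-k-1}\times\R^{k-n})$, with its canonical Liouville form, is precisely the completion of a Liouville domain: take the unit disk cotangent bundle $D^*(S^{2n-k-1}\times\overline{D^{k-n}})$ of the product of the sphere with a closed Euclidean disk (a compact manifold with boundary, hence a Weinstein/Liouville domain after a routine smoothing of corners), whose completion is diffeomorphic to the full $T^*$ of the open manifold; concretely one may instead use that $T^*N$ for any open $N$ that is the completion of a compact manifold with boundary is itself the completion of $D^*(\text{compact piece})$, glued with the radial Liouville flow. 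Thus with $V:=T^*(S^{2n-k-1}\times\R^{k-n})$ and $\lambda:=\lambda_{\can}$ we have that $(V,\lambda)$ is the completion of a Liouville domain, and its contactization $(V\times\R,\ker(\lambda+dt))$ is exactly $J^1(S^{2n-k-1})\times\R^{2k-2n}$ with its standard contact structure.

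Next I would invoke naturality of the maximal dominating cone under contactomorphisms. The cone $\g^+$ of Lemma~\ref{lem:dominating-cone} is defined purely in terms of the contact structure and the class $\EE(U,\xi)$ of contracting embeddings, so any contactomorphism $\Phi:(U_1,\xi_1)\to(U_2,\xi_2)$ carries $\EE(U_1,\xi_1)$ to $\EE(U_2,\xi_2)$ and, together with the induced Lie algebra isomorphism $K\mapsto (c_\Phi K)\circ\Phi^{-1}$ (where $\Phi^*\alpha_2=c_\Phi\alpha_1$), identifies $\g^+(U_1,\xi_1)$ with $\g^+(U_2,\xi_2)$ as ordered cones-with-$G$-action. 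Since non-orderability up to conjugation — condition (a) or (b) of Lemma~\ref{lem:non-orderable-g} — is phrased entirely in terms of this data, it is a contact invariant. Hence non-orderability of $\g^+$ for $J^1(S^{2n-k-1})\times\R^{2k-2n}$, which holds by Proposition~\ref{prop:non-order}(b) applied to the Liouville domain above, transports to non-orderability of $\g^+(S^{2n-1}\setminus\Pi_k,\xi_\st)$, which is the claim. $\square$

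The only genuine point requiring care — the expected main obstacle — is the first step: verifying that $T^*(S^{2n-k-1}\times\R^{k-n})$ really is the completion of a Liouville domain in the precise sense of \cite{CE12} that Proposition~\ref{prop:non-order}(b) requires. The subtlety is that the base is noncompact, so one is not completing $D^*(\text{closed manifold})$; instead one must present the open base as the increasing union of compact pieces with boundary and check that the canonical Liouville vector field is outward-transverse to the boundary of the cotangent disk bundle over each such piece, and that the resulting completion is genuinely (contactomorphic to) all of $T^*$ of the open base. In the case at hand this is transparent because $S^{2n-k-1}\times\R^{k-n}$ is itself the completion of the compact manifold-with-boundary $S^{2n-k-1}\times\overline{D^{k-n}}$ under the radial vector field on the $\R^{k-n}$ factor, and the cotangent lift of that vector field together with the fiberwise Liouville field assembles to the required complete Liouville field on $V$; alternatively one can sidestep the issue entirely by noting that the proof of Proposition~\ref{prop:non-order}(b) only used the existence of a complete contact diffeotopy $\psi_s$ of $U$ with $\psi_s^*(\text{contact form})=e^s(\text{contact form})$ whose attractor as $s\to-\infty$ can be covered, after conjugation, by the support of any element of $\g^+$ — and such a $\psi_s$ is manifest on $J^1(S^{2n-k-1})\times\R^{2k-2n}$ from the explicit contracting contact vector field $-Z_k$ produced in the proof of Lemma~\ref{lem:main-ex}(b). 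Either route gives the corollary; I would state the second, since it makes the argument self-contained within the excerpt.
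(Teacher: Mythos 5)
Your proposal is correct and follows exactly the route the paper takes: the corollary is obtained in the text by combining Proposition~\ref{prop:non-order}(b) with Lemma~\ref{lem:main-ex}(b), which is precisely your plan. The extra care you take in checking that $T^*(S^{2n-k-1}\times\R^{k-n})$ is a Liouville completion (or, alternatively, that the explicit contracting flow from Lemma~\ref{lem:main-ex}(b) suffices to run the argument of Proposition~\ref{prop:non-order}(b) directly) fills in details the paper leaves implicit, and is sound.
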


By contrast, we will show below in Section~\ref{sec:capac}

\begin{theorem}\label{thm:order-algebra}
If $k<n$, then for $(S^{2n-1}\setminus\Pi_k,\xi_\st)$ the
cone $\g^+$ is orderable up to conjugation: there
exist $H,K\in\g^+$ for which there is no $g\in G$ with $\Ad_g H\leq K$.
More precisely, there exists a surjective map $w:\g^+\to(0,\infty)$
such that $w(\Ad_gH)=w(H)$ for $g\in G$, $w(sH)=s^{-2}w(H)$ for any $s>0$ and such that $H\leq K$ implies $w(H)\geq w(K)$.
\end{theorem}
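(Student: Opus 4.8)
The plan is to construct the invariant $w$ from a symplectic capacity. The key geometric observation is that $(S^{2n-1}\setminus\Pi_k,\xi_\st)$ with $k<n$ is the ideal contact boundary of a Liouville (in fact, by Lemma \ref{lem:main-ex}(a), Weinstein) manifold: namely, removing the isotropic subspace $\Pi_k$ from $S^{2n-1}$ corresponds on the symplectization $(\R^{2n}\setminus\{0\},\lambda_\st)$ to deleting the coisotropic cone over $\Pi_k$, and the ``inside'' $W:=\R^{2n}\setminus\Pi_k$ (or more precisely a suitable sub-level set of a Liouville-adapted function) is a Liouville domain whose completion has ideal boundary $(S^{2n-1}\setminus\Pi_k,\xi_\st)$. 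The crucial point, which is special to $k<n$, is that this $W$ has \emph{finite} symplectic capacity, because $\Pi_k$ is isotropic rather than coisotropic, so deleting it does not open up the manifold in a symplectically ``wide'' direction; a concrete model is $\R^{2n}\setminus\Pi_k \cong T^*(\R^n\setminus\{0\})\times\R^{?}$ (compare Lemma \ref{lem:main-ex}(b), which only asserts the analogous statement for $k\geq n$, where the capacity becomes infinite and the argument breaks down). I would fix a symplectic capacity $c$ which is monotone under symplectic embeddings, conformal of degree $2$ (so $c(\lambda W)=\lambda^2 c(W)$ for the rescaled domain), and finite on bounded domains --- e.g. the Gromov width, or a Hofer--Zehnder capacity.

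Next, to each $H\in\g^+$ I would associate a sub-domain of the symplectization as follows. A compactly supported contact Hamiltonian $H$ on $U=S^{2n-1}\setminus\Pi_k$ corresponds, via $r=|x|^2+|y|^2$ and homogeneity, to the $Z$-invariant function $\wh H=rH$ on the cone $\R^{2n}\setminus(\{0\}\cup\,\cone(\Pi_k))$; its vanishing set is the cone over $\{H=0\}$. Define the region
$$
   \Omega_H := \{(x,y)\in\R^{2n}\setminus\Pi_k \mid |x|^2+|y|^2 < \rho_H(x/|(x,y)|,\dots)\}
$$
more precisely, let $\Omega_H$ be the domain in the Liouville completion $\wh W$ of $W=\R^{2n}\setminus\Pi_k$ bounded by the radial graph of $1/H$ over the set where $H>0$ --- i.e. the domain ``under the graph of $1/H$ in the symplectization''. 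Then set $w(H):=c(\Omega_H)$. The adjoint-invariance $w(\Ad_gH)=w(H)$ follows because $\Ad_gH = (c_g H)\circ g^{-1}$ corresponds, on the symplectization, to the Liouville-exact (hence symplectic) diffeomorphism lifting $g$, which carries $\Omega_H$ to $\Omega_{\Ad_gH}$; monotonicity of $c$ gives $w(H)=w(\Ad_gH)$ (apply to $g$ and to $g^{-1}$). The homogeneity: replacing $H$ by $sH$ replaces the graph of $1/H$ by that of $1/(sH)=s^{-1}(1/H)$, which is $\Omega_H$ rescaled by the Liouville flow for time $\log(s^{-1})$, i.e. a symplectic rescaling by factor $s^{-1}$, so $w(sH)=c(s^{-1}\cdot\Omega_H)=s^{-2}c(\Omega_H)=s^{-2}w(H)$. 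Monotonicity $H\le K \Rightarrow \Omega_K\subset\Omega_H \Rightarrow w(K)=c(\Omega_K)\le c(\Omega_H)=w(H)$, using $1/K\le 1/H$ pointwise on $\{K>0\}$ and monotonicity of $c$. Finally, surjectivity onto $(0,\infty)$ follows from homogeneity together with the existence of at least one $H_0\in\g^+$ with $w(H_0)\in(0,\infty)$: positivity $w(H)>0$ because $\Omega_H$ contains a nonempty open set (as $H>0$ on $\sigma(U)$ for some $\sigma\in\EE$, by definition of $\g^+$), and finiteness $w(H)<\infty$ because $\Omega_H$ has bounded closure inside $\wh W$ --- here again $k<n$ enters: the relevant completed Liouville domain has finite capacity, so $\Omega_H\subset\wh W$ (or its closure within a finite sublevel set) has finite $c$.

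The main obstacle, and where most of the work goes, is establishing \textbf{finiteness of $w(H)$}, i.e. that for every $H\in\g^+$ the domain $\Omega_H$ has finite symplectic capacity. Two things must be checked. First, one must make precise the Liouville geometry: that $W:=\R^{2n}\setminus\Pi_k$ (with $\lambda_\st$) is a legitimate Liouville manifold whose completion is the symplectization of $(S^{2n-1}\setminus\Pi_k,\alpha_\st)$ --- this requires understanding the Liouville flow near $\cone(\Pi_k)$ and confirming that $\Pi_k$ being isotropic ($k<n$) is exactly what keeps the Liouville field from being negatively complete there in the bad way. Second, and this is the genuinely delicate quantitative input: one must exhibit a symplectic embedding of $W$ (or of each bounded region $\Omega_H$) into a domain of finite capacity. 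A clean route is to produce a symplectic embedding $\R^{2n}\setminus\Pi_k \hookrightarrow \R^{2(n-k)}\times(\text{ball or polydisc in }\R^{2k})$ or to realize $\R^{2n}\setminus\Pi_k$ as (the completion of) a subcritical Weinstein domain with explicitly bounded Hofer--Zehnder capacity; the isotropic sphere $S^{2n-k-1}=S^{2n-1}\cap\Pi_k^\perp$ of Lemma \ref{lem:main-ex}(a) is the skeleton, and one needs that a neighborhood of this skeleton --- which absorbs all of $W$ under the backward Liouville flow --- has finite capacity, which holds because $2n-k-1 < n$, i.e. the skeleton is of dimension $< n$, so a Weinstein neighborhood of it embeds in a small ball. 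Once finiteness is in hand, all other properties of $w$ are the essentially formal consequences of monotonicity, conformality, and the definition of $\g^+$ sketched above, and the orderability statement ($\exists\,H,K\in\g^+$ with no $g$ realizing $\Ad_gH\le K$) follows immediately: take any $H\in\g^+$ and set $K:=sH$ with $0<s<1$, so $w(K)=s^{-2}w(H)>w(H)$; if $\Ad_gH\le K$ held then $w(H)=w(\Ad_gH)\ge w(K)>w(H)$, a contradiction.
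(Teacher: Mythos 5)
Your overall architecture matches the paper's: to each $H\in\g^+$ you attach the region under the graph of $1/H$ in the symplectization $\R^{2n}\setminus\Pi_k$ (the paper's $V(H)=\{rH(\theta)<1\}$), set $w(H)$ equal to a capacity of that region, and derive invariance, homogeneity, monotonicity and the orderability conclusion formally. That formal part is fine. The genuine gap sits exactly where you flag ``the main obstacle'': finiteness. A classical capacity that is monotone under \emph{all} symplectic embeddings and finite on bounded sets --- Gromov width or Hofer--Zehnder --- is identically $+\infty$ on the relevant domains. Since $H$ has compact support in $U_k$, it vanishes on a nonempty open subset of $S^{2n-1}$ containing a neighborhood of $S^{2n-1}\cap\Pi_k$, so $V(H)$ contains the entire cone over that set; such a conical neighborhood of $\Pi_k$ contains standard symplectic balls of arbitrarily large radius (the ball of radius $R$ centered at the point $(0,\dots,0,y_n=T)$ on the $y_n$-axis lies in it once $T\gg R$). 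Hence the Gromov width of $V(H)$, of the hyperboloids $V_k^{a,b}$, and of $\R^{2n}\setminus\Pi_k$ itself is infinite for \emph{every} $k$. Your supporting claims --- that $\Omega_H$ has bounded closure in $\wh W$, that $W$ has finite capacity because $\Pi_k$ is isotropic, and that a Weinstein neighborhood of the skeleton absorbing all of $W$ embeds in a small ball --- are all false; removing the isotropic $\Pi_k$ does open the complement in a symplectically wide direction.

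The paper's way around this is the real content of the proof and cannot be replaced by an off-the-shelf capacity. It restricts attention to the class $\fC_k$ of domains pinched between hyperboloids $V_k^{a_1,b_1}\subset V\subset\Phi(V_k^{a_2,b_2})$, asks for invariance only under the subgroup $\DD_k$ of Hamiltonian diffeomorphisms supported away from some hyperboloid (which suffices, because the lift of $\phi\in G_k$ can be smoothed to an element $S_\eps\phi\in\DD_k$ carrying $V(H)$ to $V(\Ad_\phi H)$), and defines $w(V)$ as the infimal action window $c$ for which an augmentation $SH^{(0,c)}_{n-k}(V)\to\Z/2$ built from transfer maps becomes surjective. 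Finiteness and nontriviality then rest on Proposition~\ref{prop:computation}: for $k<n$ the unbounded hyperboloid $V_k^{a,b}$ has the filtered symplectic homology of the ball $B_a^{2(n-k)}$, because after exhausting it by the carefully chosen domains $V_C$ all closed characteristics other than those on the sphere of radius $a$ in $\R^{2(n-k)}$ have action $\geq C^2B_{a,b}\to\infty$. This degree-$(n-k)$, action-filtered input is precisely what distinguishes $k<n$ from $k\geq n$ (where the cone is in fact non-orderable), and it is absent from, and incompatible with, the capacity you propose to use.
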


\section{Orderability and symplectic non-squeezing}\label{sec:special}

In this section we will rephrase Theorem~\ref{thm:order-algebra} as a
non-squeezing result for suitable unbounded domains in the standard
symplectic space $(\R^{2n},\om=\sum_1^n dx_j\wedge dy_j)$. Throughout
this section we fix $k$ with $1\leq k\leq n$ and denote
$$
   U_k:=S^{2n-1}\setminus \Pi_k,\qquad G_k:=\Diff_c(U_k,\xi_\st).
$$
\subsection{The class $\fC_k$ of unbounded domains in $\R^{2n}$}
Introduce ``polar coordinates", $r=|x|^2+|y|^2\in\R$,
$\theta=r^{-1/2}(x,y)\in S^{2n-1}$, so that the standard Liouville
form $\lambda_\st$ can be written as
$$
   \lambda_\st=\frac12\sum_1^n(x_jdy_j-y_jdx_j) = r\alpha_\st,
$$
where $\alpha_\st$ is the standard contact form on the unit  sphere
$S^{2n-1}$ which defines the standard contact structure
$\xi_\st=\ker\alpha_\st$. The coordinates $(r,\theta)$ identify
$(\R^{2n}\setminus 0,\lambda_\st)$ with the symplectization
$(\R_+\times S^{2n-1}, r\alpha_\st)$ of the standard contact structure on $S^{2n-1}$.
Thus the symplectization of $U_k=S^{2n-1}\setminus \Pi_k$
gets identified with $\R^{2n}\setminus \Pi_k$.

Note that any contactomorphism $\phi$ of $(S^{2n-1},\xi_\st)$ defines
a symplectomorphism $S\phi:\R^{2n}\to\R^{2n}$, singular at the origin, by the formula
$$
   S\phi(r,\theta) := \Bigl(\frac{r}{c_\phi(\theta)},\phi(\theta)\Bigr),
$$
where $\phi^*\alpha_\st = c_\phi(\theta)\alpha_\st$.

\begin{lemma}\label{lem:smoothing}
If $\phi$ is contactly isotopic to the identity, then there exists a
constant $K_\phi>1$ such that for any $\eps>0$
there exists a smooth symplectomorphism $S_\eps\phi$ of $\R^{2n}$ which
equals the identity on the $\eps$-ball around the origin and which coincides
with $S\phi$ outside the $(K_\phi\eps)$-ball.
Moreover, if $\phi$ as well as
its isotopy to the identity equal the identity near some compact
subset $C\subset S^{2n-1}$, then $S_\eps\phi$ can be chosen equal to
the identity on the cone over $C$.
\end{lemma}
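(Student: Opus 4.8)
The plan is to first build the smoothing in the symplectization (cone) picture, where the map $S\phi$ is homogeneous of degree $1$ with respect to the Liouville scaling, and then transport the construction back. Start from a contact isotopy $\phi_\tau$, $\tau\in[0,1]$, with $\phi_0=\id$, $\phi_1=\phi$. On the symplectization $\R^{2n}\setminus 0\cong\R_+\times S^{2n-1}$ this lifts to a symplectic isotopy $S\phi_\tau$, which is the time-dependent Hamiltonian flow generated by the $1$-homogeneous (in $r$) Hamiltonian $F_\tau(r,\theta)=r\,K_\tau(\theta)$, where $K_\tau$ is the contact Hamiltonian of the vector field $\frac{d}{d\tau}\phi_\tau$ with respect to $\alpha_\st$. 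The point of introducing the isotopy is that, unlike $S\phi$ itself (which need not extend smoothly over $0$), a Hamiltonian description lets us \emph{interpolate} between $S\phi$ and the identity by cutting off the Hamiltonian, at the cost of controlling how far out in $r$ the cutoff must happen.

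Next I would do the cutoff. Fix a smooth function $\beta:\R_+\to[0,1]$ with $\beta\equiv 0$ near $r=0$ and $\beta\equiv 1$ for $r\geq 1$, and for $\delta>0$ set $\beta_\delta(r)=\beta(r/\delta)$. Replace the Hamiltonian $F_\tau$ by $F_\tau^\delta:=\beta_\delta(r)\,F_\tau(r,\theta)=\beta_\delta(r)\,r\,K_\tau(\theta)$, which is smooth across the origin (it vanishes identically near $0$) and agrees with $F_\tau$ for $r\geq\delta$. Let $S_\delta\phi$ be the time-$1$ flow of $F_\tau^\delta$. By construction $S_\delta\phi$ is a genuine symplectomorphism of all of $\R^{2n}$, it equals the identity near $0$, and on the region $\{r\geq\delta\}$ it agrees with $S\phi_1=S\phi$ \emph{provided} the flow of $F_\tau^\delta$ does not push points out of, or into, that region during the isotopy. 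The flow of $F_\tau^\delta$ changes $r$ at a controlled multiplicative rate: since $L_{Y_{K_\tau}}\alpha_\st = dK_\tau(R)\,\alpha_\st$, one has $\frac{d}{d\tau}\log r = -\,dK_\tau(R)$ along trajectories in the region where $\beta_\delta\equiv1$, so $r(1)/r(0)\in[e^{-A},e^{A}]$ with $A:=\int_0^1\|dK_\tau(R)\|_{C^0}\,d\tau$ depending only on $\phi$ and the chosen isotopy. Hence if we start the cutoff at radius $\delta=e^{-A}\eps$ we are guaranteed that the orbit of any point with $r\geq\eps$ stays in $\{r\geq e^{-2A}\eps\}$ — reorganizing the bookkeeping, there is a constant $K_\phi>1$ (essentially $e^{2A}$, or any larger constant) such that: setting $\delta:=\eps$, the map $S_\delta\phi$ equals the identity on the $\eps$-ball and equals $S\phi$ outside the $(K_\phi\eps)$-ball. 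Rescaling $\eps$ is harmless because the whole picture is equivariant under the Liouville scaling $(r,\theta)\mapsto(\mu r,\theta)$, which conjugates $S_\delta\phi$ to $S_{\mu\delta}\phi$; this is what makes a single constant $K_\phi$ work for all $\eps$.

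For the "moreover" clause: if $\phi$ and the whole isotopy $\phi_\tau$ are the identity near a compact set $C\subset S^{2n-1}$, then $K_\tau$ vanishes near $C$, hence $F_\tau^\delta$ vanishes on a neighborhood of the cone $\R_+\times C$, so $S_\delta\phi$ is the identity there; taking $\eps$ small enough that the $(K_\phi\eps)$-ball is still disjoint from the part of the cone we care about, or rather simply observing that $F_\tau^\delta\equiv0$ on the full cone over $C$ regardless of $\eps$, gives the claim. The main obstacle is the radial control: one must check that cutting off the \emph{Hamiltonian} (rather than the map) genuinely produces a map that \emph{coincides} with $S\phi$ outside a ball, not merely one that is $C^0$-close to it; this is where the estimate $r(1)/r(0)\in[e^{-A},e^{A}]$ and the choice of $K_\phi$ in terms of $A$ are essential, and it is the only place where the hypothesis "contactly isotopic to the identity" (as opposed to just "a contactomorphism") is used.
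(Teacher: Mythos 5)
Your proof is correct and follows essentially the same route as the paper: symplectize the contact isotopy, use the uniform bounds on the conformal factor (your $e^{\pm A}$ is the paper's $M$, $1/m$) to control how far the flow moves the radial coordinate, and interpolate to the identity across an annulus whose size is proportional to $\eps$. Your explicit cutoff $\beta_\delta(r)\,r\,K_\tau(\theta)$ of the generating Hamiltonian is precisely the standard construction realizing the extension step that the paper only asserts (``we can extend the Hamiltonian isotopy $S\phi_t|_{\{r\geq\delta\}}$\dots''), and your treatment of the cone over $C$ matches the paper's.
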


\begin{proof}
Let $\phi_t$, $t\in[0,1]$ be a contact isotopy of $(S^{2n-1},\xi_\st)$
connecting $\phi_0=\id$ to $\phi_1=\phi$ with $\phi_i=\id$ on $C$.
Let $S\phi_t:\C^n\setminus 0\to\C^n \setminus 0$ be its symplectization.
Set
$$
   M:=\mathop{\max}\limits_{t\in[0,1]}  \max (c_{\phi_t}),\qquad
   m:=\mathop{\min}\limits_{t\in[0,1]}  \min (c_{\phi_t})
$$
and note that $m\leq 1\leq M$. Then for any $\delta>0, t\in[0,1]$ we have
$$
   \left\{r\leq \frac{\delta}{M}\right\}\subset
   S\phi_t(\{r\leq\delta\})\subset\left\{r\leq\frac{\delta}{m}\right\}.
$$
Therefore, we can extend the Hamiltonian isotopy
$S\phi_t|_{\{r\geq\delta\}}$ to a Hamiltonian isotopy $g_t:\C^n\to
\C^n$ such that $g_0=\id$ and  for all $t\in[0,1]$
$$
   g_t|_{\left\{r\leq \frac{\delta}{2M}\right\}\cup\{\theta\in K\}}=\id,\qquad
   g_t|_{\left\{r\geq\frac{2\delta}m\right\}}=S\phi_t.
$$
Then for $\delta =2M\eps$  the symplectomorphism $S_\eps\phi:=g_1$ satisfies the required
conditions with $K_\phi=\frac{4M}m$.
\end{proof}

Given a nonnegative compactly supported contact Hamiltonian
$H:U_k\to\R$ we extend it by $0$ to $S^{2n-1}$ and will keep the
notation $H$ for this extension. Recall that according to Lemma \ref{lm:convex-domin} the cone
$\g^+=\g^+(U_k)$ consists of all functions satisfying the conditions
\begin{enumerate}
\item $H =0$ near $S^{2n-1}\cap\Pi_k=S^{2n-1}\cap \{x=0,y_1=\dots=y_{n-k}=0\}$;
\item $H$ is positive  on the  image $g(S^{2n-1}\cap\Pi_k^\perp)$ of the equator
under   a contactomorphism  $g\in G_k$.
\end{enumerate}

We now define a class $\fC_k$ of domains in $\R^{2n}$ which, in
particular (see Lemma \ref{lm:gtoC} below), contains all the domains of the form
$$
   V(H) := \{(r,\theta)\in\R_+\times S^{2n-1}\mid
   rH(\theta)<1\},\qquad H\in\g^+(U_k).
$$

First, we  add to $\fC_k$ all hyperboloids
$$
   V_k^{a,b} := \left\{\frac1{a^2}\left(\sum\limits_1^n
   x_i^2+\sum\limits_1^{n-k} y_i^2\right)
   -\frac1{b^2}\sum\limits_{n-k+1}^{n} y_i^2 <1\right\},\qquad a,b>0.
$$

Let $\DD^{a,b}_k$ denote the identity component of the group of
Hamiltonian diffeomorphisms of $\R^{2n}$ supported away from
$V^{a,b}_k$ and set
$$
   \DD_k := \bigcup\limits_{a,b>0}\DD_k^{a,b}.
$$
It follows from Lemma~\ref{lem:smoothing} above that for any
contactomorphism $\phi\in G_k$ the smoothed symplectomorphism
$S_\eps\phi:\R^{2n}\to\R^{2n}$ belongs to $\DD^{a,b}_k$ if $a$ and $\frac ab$ are
small enough. Moreover, $S_\eps\phi$ agrees with $S\phi$ outside
$V^{a',b'}_k$ for any $b'>0$ and $a'> K_\phi\eps$, where $K_\phi$ is
the constant from Lemma \ref{lem:smoothing}. Thus, although the
smoothing $S_\eps\phi$ is not canonical, its action on domains which contain
$V^{a',b'}_k$ with $a'>K_\phi\eps$ is independent of the choice of the smoothing.

Now we are ready to give the general definition of the domains which
form the class $\fC_k$.
\begin{definition}\label{def:fCk} A connected open domain $V\in\R^{2n}$ belongs to
$\fC_k$ if there exist $a_1,b_1,a_2,b_2>0$ and a symplectomorphism
$\Phi\in\DD_k$ such that
$$
   V_k^{a_1,b_1}\subset V\subset \Phi(V_k^{a_2,b_2}).
$$
\end{definition}
The group $\DD_k$, and hence the group $G_k$, acts on $\fC_k$ by symplectomorphisms.

\begin{lemma}\label{lm:gtoC}
(i) For $H\in\g^+(U_k)$ we have $V(H)\in\fC_k$. 

(ii) For $H\in\g^+(U_k)$, $\phi\in G_k$ and $\eps$ sufficiently small we have
$$
   S_\eps\phi\bigl(V(H)\bigr) = V(\Ad_\phi H).
$$
(iii) If $H,K\in\g^+(U_k)$ satisfy $H\geq K$, then $V(H)\subset V(K)$.
\end{lemma}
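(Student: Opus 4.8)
The plan is to verify the three assertions essentially by unwinding definitions, using Lemma~\ref{lem:smoothing} and Lemma~\ref{lm:convex-domin}(iii) as the key inputs.

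For part~(iii) I would argue directly: if $H\geq K$ pointwise on $S^{2n-1}$, then for any $(r,\theta)$ with $r>0$ we have $rH(\theta)\geq rK(\theta)$, so $rH(\theta)<1$ implies $rK(\theta)<1$; hence $V(H)\subset V(K)$. (Note the orientation-reversal: larger Hamiltonian gives thinner domain, since $V(H)$ is a sublevel set of $rH$.) One should also note $V(0)=\R^{2n}$, and the only subtlety is that $H$ may vanish somewhere, so $V(H)$ genuinely contains the whole cone over $\{H=0\}$; this is consistent and causes no trouble.

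For part~(ii), the computation is: by definition $S\phi(r,\theta)=\bigl(r/c_\phi(\theta),\phi(\theta)\bigr)$, so $S\phi$ maps $\{rH(\theta)<1\}$ to the set of points $(r',\theta')$ with $\theta'=\phi(\theta)$, $r'=r/c_\phi(\theta)$, i.e.\ $r=r'c_\phi(\theta)$, $\theta=\phi^{-1}(\theta')$, and the defining inequality becomes $r'c_\phi(\phi^{-1}(\theta'))H(\phi^{-1}(\theta'))<1$. Recalling from~\eqref{eq:ad-contact} that $\Ad_\phi H=(c_\phi H)\circ\phi^{-1}$, this is exactly $r'(\Ad_\phi H)(\theta')<1$, i.e.\ $S\phi(V(H))=V(\Ad_\phi H)$ as subsets of $\R^{2n}\setminus\Pi_k$. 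To promote this to the smoothed map $S_\eps\phi$ I would invoke the ``moreover'' clause of Lemma~\ref{lem:smoothing}: since $H$, hence $\Ad_\phi H$, vanishes near $S^{2n-1}\cap\Pi_k$ and the relevant region where $S_\eps\phi$ differs from $S\phi$ is the small ball $\{r\le K_\phi\eps\}$ around the origin, for $\eps$ small enough the set-theoretic images of $V(H)$ agree away from that ball, and inside that ball both $V(H)$ and $V(\Ad_\phi H)$ coincide with the full ball (because $r$ is tiny and $H$ bounded, so $rH<1$ automatically there). Hence the two domains agree exactly. I should be slightly careful to also track the cone over $C$ where $\phi=\id$, but there $S_\eps\phi=\id$ by the lemma and $\Ad_\phi H=H$, so nothing happens.

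For part~(i), I would produce the two-sided sandwich required by Definition~\ref{def:fCk}. By Lemma~\ref{lm:convex-domin}(iii), $H\in\g^+(U_k)$ means $H$ is positive on $g(S^{2n-1}\cap\Pi_k^\perp)$ for some $g\in G_k$ and vanishes near $S^{2n-1}\cap\Pi_k$. The outer inclusion: since $H$ has compact support in $U_k$, $H$ vanishes on a neighborhood of $\Pi_k$, so there are $a_2,b_2>0$ with $H\equiv 0$ on the spherical shadow of $V_k^{a_2,b_2}$; then $V(H)\supset$ that hyperboloid region — but I actually want $V(H)\subset\Phi(V_k^{a_2,b_2})$, so instead I use that $H$ is \emph{positive} on the compact set $g(\text{equator})$, hence bounded below by some $c>0$ there; pulling back by the smoothed symplectomorphism $\Phi:=S_\eps g^{-1}\in\DD_k$ (valid for small $\eps$), the domain $\Phi^{-1}(V(H))=V(\Ad_{g^{-1}}H)$ by part~(ii), and $\Ad_{g^{-1}}H$ is positive on the standard equator, so its zero-set is disjoint from a neighborhood of $S^{2n-1}\cap\Pi_k^\perp$, which forces $V(\Ad_{g^{-1}}H)\subset V_k^{a_2,b_2}$ for suitable $a_2,b_2$ (a large-$r$ estimate in the directions transverse to $\Pi_k^\perp$). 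The inner inclusion $V_k^{a_1,b_1}\subset V(H)$ follows because $H$ is \emph{bounded} and supported away from $\Pi_k$: choosing $a_1$ small makes $r$ small on $V_k^{a_1,b_1}$ in the $\Pi_k^\perp$-directions where $H>0$, so $rH<1$ there, while near $\Pi_k$ (the $b_1$-direction, where $V_k^{a_1,b_1}$ extends to infinity) $H\equiv0$ so $rH=0<1$ automatically. The main obstacle is this last point: making precise that a neighborhood of $\Pi_k$ on which $H$ vanishes, together with the boundedness of $H$ elsewhere, really does contain a whole hyperboloid $V_k^{a_1,b_1}$ — one must check the asymptotic geometry of $V_k^{a_1,b_1}$ (it is a thin neighborhood of the $y_{n-k+1},\dots,y_n$-subspace, which lies in $\Pi_k$) matches the shape of the region where $H$ is either zero or small, and similarly for the outer bound. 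I would handle this by explicit coordinate estimates relating the defining quadratic form of $V_k^{a,b}$ to the spherical coordinate $\theta$ and the function $\sum_{n-k+1}^n y_j^2$ appearing in Lemma~\ref{lem:main-ex}(a).
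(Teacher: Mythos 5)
Your proposal is correct and follows essentially the same route as the paper: parts (ii) and (iii) by direct unwinding of the definitions (the paper simply calls these ``straightforward''), and part (i) by first conjugating so that $H$ is positive on the standard equator $S^{2n-1}\cap\Pi_k^\perp$ and then sandwiching $V(H)$ between two hyperboloids via exactly the coordinate estimates you outline (the paper uses $u=|x|^2+\sum_1^{n-k}y_j^2$, $v=\sum_{n-k+1}^n y_j^2$, $\rho=v/u$, with $H=0$ for $\rho\geq\rho_1$ and $H\geq m>0$ for $\rho\leq\rho_0$).
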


\begin{figure}[h]
\begin{center}
\includegraphics[scale=.66 ]{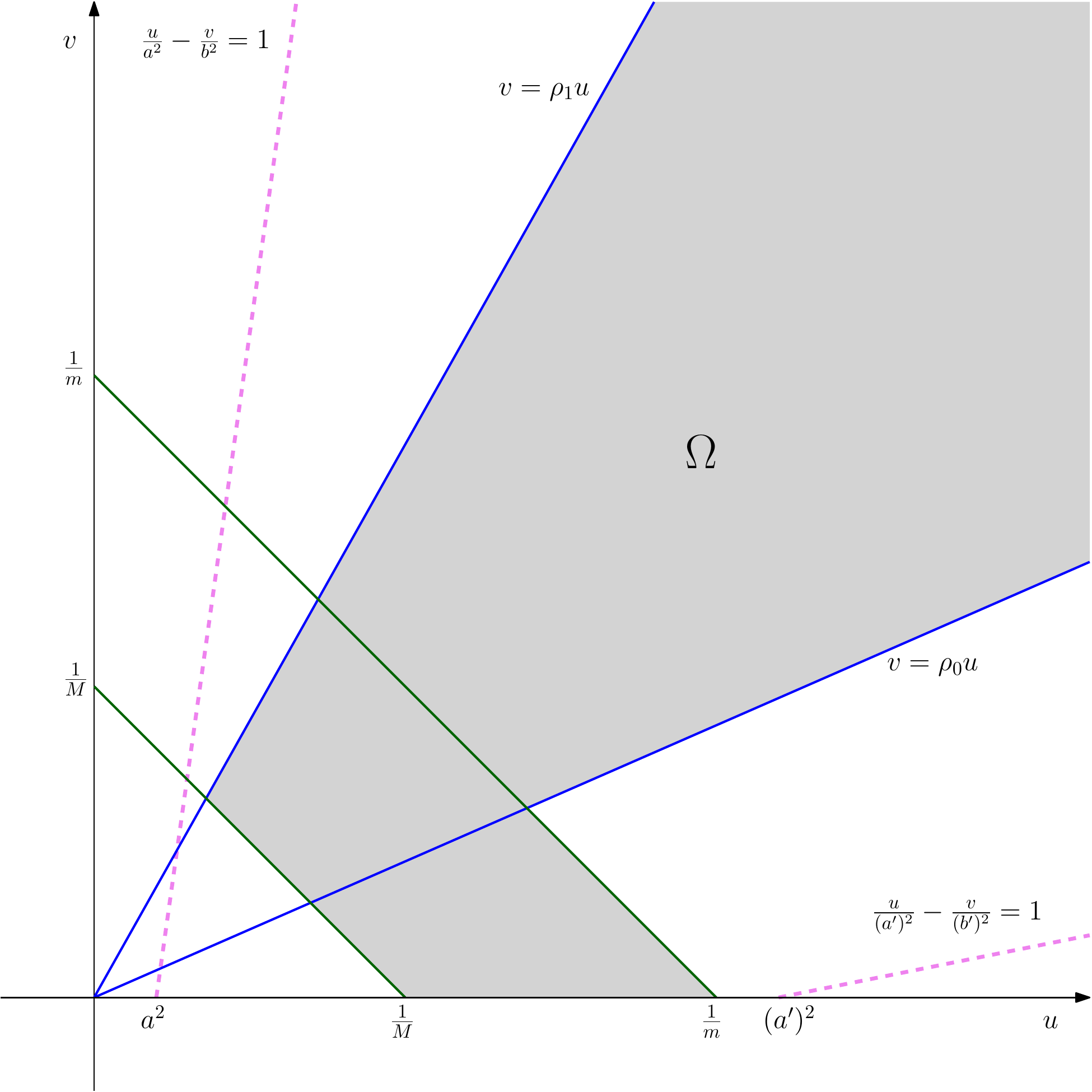}
\caption{Squeezing $V(H)$ between two hyperboloids}\label{fig:squeezing}
\end{center}
\end{figure}

\begin{proof}
Claims (ii) and (iii) are straightforward. To prove 
(i) we first observe that, since the class $\fC_k$ is invariant under
the action of the group $\DD_k$, we can replace $H$ by $\Ad_g(H)$ for any $g\in G_k$.
Hence we can assume without loss of generality that $H|_{S^{2n-1}\cap\Pi_k^\perp}>0$.
For $(x,y)\in\R^{2n}$ we denote 
$$
   u:=|x|^2+\sum_1^{n-k}y_j^2,\quad
   v:=\sum\limits_{n-k+1}^ny_j^2, \quad
   r=u+v=|x|^2+|y|^2,\quad \rho:=\frac vu.
$$
Then $\rho=\tan^2\alpha$, where $\alpha$ is the angle between the
vector $(x,y)\in\R^{2n}$ and the subspace $\Pi_k^\perp$. 
We can  view $\rho$ as a function on $S^{2n-1}$. Take $H\in\g^+$ and
set $M:=\max (H).$ Then $H|_{ \rho\geq\rho_1 }=0$ for a sufficiently
large $\rho_1$, and for a sufficiently small $\rho_0$ we have
$m:=\mathop{\min}\limits_{\rho\leq\rho_0}(H)>0$. Figure~\ref{fig:squeezing}
shows that
$$
   \p V(H)\subset\Omega:=\left\{ v\leq\rho_1u, u+v\geq\frac1M
   \right\}\setminus \left\{ v<\rho_0 u,   u+v >\frac1m  \right\}.
$$ 
Consider the hyperboloids 
$$
   V^{a,b}_k=\left\{ \frac u{a^2}-\frac v{b^2}<1\right\},\qquad
   V^{a',b'}_k=\left\{ \frac u{(a')^2}-\frac v{(b')^2}<1\right\}.
$$
An elementary geometric argument illustrated by Figure~\ref{fig:squeezing}
\footnote{We thank V.~Stojisavljevi\'c for preparing this figure.}
shows that
\begin{equation*} 
   \Omega\subset V^{a',b'}_k\setminus V^{a,b}_k,
\end{equation*}
 provided that
$$ 
   a^2 < T:= (M(1+\rho_1))^{-1},\qquad \left (\frac{b}{a}\right)^2 > \frac{\rho_1T}{T-a^2}
$$
and
$$ 
   \left(\frac{b'}{a'}\right)^2<\rho_0,\qquad (a')^2> \frac1m\;.
$$
These inequalities  express  the condition  that the dotted lines $ \frac u{(a)^2}-\frac v{(b)^2}=1$
  and $ \frac u{(a')^2}-\frac v{(b')^2}=1$, representing the boundaries of the domains
  $V^{a,b}_k$ and $V^{a',b'}_k$, do not intersect the shaded region representing $\Omega$.
 Thus, this  choice of $a,b,a', b'$ guarantees that
 $V_k^{a,b}\subset V(H)\subset  V_k^{a',b'} $, hence $V(H)\in\fC_k$.
\end{proof}
\subsection{Capacity-like function on $\fC_k$ and proof of
  Theorem~\ref{thm:order-algebra}}\label{sec:capac}
The following theorem will be proved in Section~\ref{sec:cap-proof} below.

\begin{thm}[Capacity]\label{thm:capacity}
There exists a capacity-like function $w:\fC_k\to(0,\infty)$ with the
following properties:
\begin{enumerate}
\item $w\bigl(\Psi(V)\bigr)=w(V)$ for all $\Psi\in \DD_k$ and $V\in\fC_k$;
\item $V\subset V'$  implies $w (V)\leq w(V')$ for all $V,V'\in\fC_k$;
\item $w(sV)=s^2w(V)$ for all $s>0$;
\item $w(V_k^{a,b})=\pi a^2$ for all $a,b>0$.
\end{enumerate}
\end{thm}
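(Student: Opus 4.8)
The plan is to construct $w$ on the hyperboloids $V_k^{a,b}$ first, where property (iv) forces $w(V_k^{a,b})=\pi a^2$, and then to extend it to all of $\fC_k$ by the standard ``squeezing'' recipe that is built into Definition~\ref{def:fCk}. Concretely, for $V\in\fC_k$ I would set
\[
   w(V) := \sup\{\pi a^2 \mid V_k^{a,b}\subset\Psi(V)\ \text{for some}\ \Psi\in\DD_k,\ b>0\},
\]
or symmetrically the infimum of $\pi a^2$ over outer hyperboloids $V\subset\Psi(V_k^{a,b})$. Properties (i), (ii), (iii) are then immediate from this definition: $\DD_k$-invariance because $\Psi$ ranges over the whole group and $\DD_k$ is a group; monotonicity because enlarging $V$ enlarges the set of admissible inner hyperboloids; and the scaling $w(sV)=s^2w(V)$ because $sV_k^{a,b}=V_k^{sa,sb}$ and conjugating $\DD_k$ by the dilation $x\mapsto sx$ preserves $\DD_k$ (dilations are symplectic up to rescaling the form, but the condition ``supported away from $V_k^{a,b}$'' transforms correctly). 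The content that makes $w$ well-defined and finite is precisely Definition~\ref{def:fCk}: the inner bound $V_k^{a_1,b_1}\subset V$ keeps $w(V)>0$, and the outer bound $V\subset\Phi(V_k^{a_2,b_2})$ keeps $w(V)<\infty$.

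The crux is property (iv), i.e.\ computing $w(V_k^{a,b})$ exactly: one must show both that $V_k^{a,b}$ cannot be squeezed, via any $\Psi\in\DD_k$, to fit inside $V_k^{a',b'}$ with $a'<a$, and (for the outer/inner definitions to agree) that the obvious inclusions give the matching upper bound. The upper bound $w(V_k^{a,b})\le\pi a^2$ is cheap: take $\Psi=\id$ and the inner hyperboloid $V_k^{a,b}$ itself. The lower bound — that no $\Psi\in\DD_k$ maps $V_k^{a,b}$ into $V_k^{a',b'}$ when $\pi(a')^2<\pi a^2$ — is the hard non-squeezing statement, and this is what Section~\ref{sec:cap-proof} must actually prove. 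I expect this to be done by a Gromov-width / pseudoholomorphic-curve argument adapted to the noncompact setting: the hyperboloid $V_k^{a,b}$ contains the symplectic disc bundle / polydisc-like region whose ``waist'' in the $\Pi_k^\perp$-directions has area $\pi a^2$, and an element of $\DD_k$ is compactly supported \emph{away from} $V_k^{a,b}$, hence acts trivially near the relevant symplectic cross-section, so a standard filling/neck-stretching argument traps a holomorphic curve through a point of the image and bounds its area from below by $\pi a^2$. The $b$-direction is unbounded and noncompact, which is exactly why one needs the convexity-at-infinity structure and the restriction to $\DD_k$ (diffeomorphisms supported away from the hyperboloid) rather than all of $\Diff_c$.

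Granting Theorem~\ref{thm:capacity}, the deduction of Theorem~\ref{thm:order-algebra} is short and I would present it as follows. Define $w:\g^+(U_k)\to(0,\infty)$ by $w(H):=w\bigl(V(H)\bigr)$, using Lemma~\ref{lm:gtoC}(i) to know $V(H)\in\fC_k$. By Lemma~\ref{lm:gtoC}(ii), for $\phi\in G_k$ and small $\eps$ we have $V(\Ad_\phi H)=S_\eps\phi\bigl(V(H)\bigr)$ with $S_\eps\phi\in\DD_k$, so property (i) of the capacity gives $w(\Ad_\phi H)=w(H)$. For the scaling: $V(sH)=\{r\cdot sH(\theta)<1\}=\tfrac1{\sqrt s}V(H)$, so property (iii) gives $w(sH)=s^{-1}w(H)$; to get the stated $s^{-2}$ one simply replaces $w$ by $w^{2}$ (still conjugation-invariant and still monotone in the required direction), so I would define $w(H):=w\bigl(V(H)\bigr)^{2}$ and adjust property (iv)'s normalization to $w(V_k^{a,b})^{?}$ accordingly — or, more cleanly, note that the paper only needs \emph{some} invariant with a definite scaling weight, so either convention works and I would match whichever the later section uses. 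Monotonicity $H\le K\Rightarrow w(H)\ge w(K)$ follows from Lemma~\ref{lm:gtoC}(iii) ($H\ge K$ gives $V(H)\subset V(K)$, hence by capacity property (ii) $w(V(H))\le w(V(K))$ — note the direction flips, which is consistent with the theorem statement reading ``$H\le K$ implies $w(H)\ge w(K)$'' once one checks $H\le K\Rightarrow V(H)\supset V(K)$). Surjectivity onto $(0,\infty)$ comes from the hyperboloids together with the scaling. Finally, orderability up to conjugation is immediate: pick $H,K\in\g^+(U_k)$ with $w(H)<w(K)$ (possible by surjectivity and rescaling); if some $g\in G$ satisfied $\Ad_gH\le K$ then $w(K)\ge w(\Ad_gH)=w(H)$, contradiction.
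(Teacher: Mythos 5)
Your formal scaffolding is fine: the reduction of properties (i)--(iii) to group/monotonicity formalities, the observation that everything hinges on (iv), and the deduction of Theorem~\ref{thm:order-algebra} (including the correct flip $H\leq K\Rightarrow V(H)\supset V(K)$ and the scaling-exponent bookkeeping) all match what the paper does. But there is a genuine gap: you explicitly defer the entire content of the theorem --- the non-squeezing of $V_k^{a,b}$ into $V_k^{a',b'}$ for $a'<a$ by elements of $\DD_k$ --- to an unproven ``Gromov-width / neck-stretching'' heuristic, and that heuristic, as stated, cannot work. The hyperboloids are unbounded in the $(x_j,y_j)$, $j>n-k$, directions and contain symplectic polydiscs $D(R)\times D(\eps)^{n-1}$ of arbitrarily large area $R$ there; so does the smaller target $V_k^{a',b'}$. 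Hence ``trapping a holomorphic curve of area $\geq\pi a^2$ through a point of the image'' produces no contradiction with the target, and the Gromov width is not the invariant that separates $V_k^{a,b}$ from $V_k^{a',b'}$. Note also that with your definition even the \emph{finiteness} of $w$ on a general $V\in\fC_k$ (from the outer bound $V\subset\Phi(V_k^{a_2,b_2})$) already requires the non-squeezing statement, so nothing in the theorem is actually established without it. A further small inaccuracy: $\Psi\in\DD_k$ is only supported away from \emph{some} hyperboloid $V_k^{a_0,b_0}$, which may be much smaller than $V_k^{a,b}$, so $\Psi$ need not fix any ``waist'' of $V_k^{a,b}$.

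The paper's actual proof builds a spectral rather than an embedding capacity. It exhausts the unbounded domains by carefully chosen bounded subdomains $V_C$ of restricted contact type whose closed characteristics split into those on the sphere of radius $a$ in $\R^{2(n-k)}$ (actions $k\pi a^2$) and the rest, with actions growing like $C^2$ (Lemmas~\ref{lm:2d} and~\ref{lem:VC}); computes the filtered symplectic homology $SH^{(0,c)}_{n-k}(V_k^{a,b})$ by deforming to a split Hamiltonian with a maximum-principle compactness argument (Lemmas~\ref{lem:max-principle} and~\ref{lem:Hs}, Proposition~\ref{prop:computation}); defines an augmentation $\alpha_{V,c}:SH^{(0,c)}_{n-k}(V)\to\Z/2$ natural under transfer maps and the $\DD_k$-action (Corollaries~\ref{cor:augmentation} and~\ref{cor:augm}); and sets $w(V)=\inf\{c\mid \alpha_{V,c}\text{ is onto}\}$, from which (i)--(iv) follow. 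The key point your sketch misses is that the obstruction lives in the single degree $n-k$ of a \emph{filtered} Floer-theoretic invariant, detecting the action $\pi a^2$ of the shortest closed characteristic in the compact directions --- an input of a completely different nature from a width bound, and one that requires the bulk of Section~\ref{sec:invariants} to set up. If you want to salvage your embedding-style definition, you would still have to prove Proposition~\ref{prop:computation} (or an equivalent invariant) first, at which point the paper's definition is available anyway.
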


\begin{cor}\label{cor:non-squeezing2}
For any domain $V\in \fC_k$ and $s>1$ there is no $\Psi\in\DD_k$ such that $\Psi(sV)\subset V$.
\end{cor}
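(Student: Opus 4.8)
The plan is to derive Corollary~\ref{cor:non-squeezing2} directly from Theorem~\ref{thm:capacity} by a short argument combining the monotonicity, scaling, and $\DD_k$-invariance of $w$. Suppose, for contradiction, that there exist $V\in\fC_k$, $s>1$, and $\Psi\in\DD_k$ with $\Psi(sV)\subset V$. First I would note that $sV\in\fC_k$: indeed, rescaling by $s$ carries the inclusions $V_k^{a_1,b_1}\subset V\subset\Phi(V_k^{a_2,b_2})$ defining membership of $V$ in $\fC_k$ to $V_k^{sa_1,sb_1}\subset sV\subset (sV_k^{a_2,b_2})=s\Phi(V_k^{a_2,b_2})$, and one checks that $sV$ is squeezed between two hyperboloids via a symplectomorphism in $\DD_k$ (the dilation conjugates $\DD_k$ into itself up to the parameters $a,b$, and $s\Phi(V_k^{a_2,b_2})=(\text{dilation})\circ\Phi\circ(\text{dilation})^{-1}(V_k^{sa_2,sb_2})$, which lies in $\DD_k$). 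Hence $w(sV)$ is defined.

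Next I would apply the three properties of $w$ in turn. By $\DD_k$-invariance (property~(i)), $w\bigl(\Psi(sV)\bigr)=w(sV)$. By the scaling property~(iii), $w(sV)=s^2w(V)$. By monotonicity (property~(ii)) applied to the assumed inclusion $\Psi(sV)\subset V$, we get $w\bigl(\Psi(sV)\bigr)\leq w(V)$. Chaining these gives
$$
   s^2 w(V) = w(sV) = w\bigl(\Psi(sV)\bigr)\leq w(V).
$$
Since $w(V)\in(0,\infty)$ is strictly positive and finite, we may divide by it to conclude $s^2\leq 1$, i.e.\ $s\leq 1$, contradicting the hypothesis $s>1$. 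This completes the proof.

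The only genuine point requiring care—and the one I would expect to be the main obstacle—is the verification that $sV\in\fC_k$, so that $w(sV)$ is actually defined and the properties of Theorem~\ref{thm:capacity} apply to it; everything else is a one-line computation. The subtlety is that $\DD_k$ is the union of the groups $\DD_k^{a,b}$ of Hamiltonian diffeomorphisms supported away from the fixed hyperboloids $V_k^{a,b}$, and one must check that conjugating such a diffeomorphism by the dilation $x\mapsto sx$ again lands in some $\DD_k^{a',b'}$, which holds because the dilation maps $V_k^{a,b}$ to $V_k^{sa,sb}$ and conjugation by a symplectomorphism carries "supported away from $V_k^{a,b}$" to "supported away from $V_k^{sa,sb}$". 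If one prefers to avoid even this, an alternative is to observe that the corollary as stated only concerns a single given $V\in\fC_k$, and one can instead run the argument contrapositively: the inclusion $\Psi(sV)\subset V$ would force, after applying the known two-hyperboloid bounds for $V$, a symplectic embedding of one hyperboloid into a strictly smaller one within $\DD_k$, contradicting properties~(ii)--(iv) of $w$ directly on hyperboloids (where $w(V_k^{a,b})=\pi a^2$ makes the scaling explicit). Either route closes the argument.
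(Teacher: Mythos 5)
Your argument is correct and is essentially the paper's own proof: the paper likewise deduces the corollary from $w(sV)=s^2w(V)>w(V)$ together with the $\DD_k$-invariance and monotonicity of $w$. The point you flag about $sV\in\fC_k$ is already absorbed into the paper's proof of Theorem~\ref{thm:capacity}(iii), where it is noted that the rescaling $z\mapsto sz$ preserves the class $\fC_k$, so your extra verification is sound but not a new ingredient.
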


\begin{proof}
By Theorem~\ref{thm:capacity}(iii) we have
$$
   w(sV)=s^2w(V)>w(V),
$$
and the result follows from Theorem~\ref{thm:capacity}(i) and (ii).
\end{proof}

\begin{proof}[Proof of Theorem~\ref{thm:order-algebra}]
We define the  required function $w:\g^+\to\R$ by the formula
$w(K):=w\bigl(V(K)\bigr)$. For $g\in G_k$ we have
$V(\Ad_g(K))=S_g\phi(V(K))$, so Theorem \ref{thm:capacity}(i) implies that
the function $w$ is constant on orbits of the adjoint action.
We have $V(sK)=s^{-2}V(K)$, hence $w(sK)=s^{-2}w(K)$ in view of
Theorem~\ref{thm:capacity}(iii). This also yields surjectivity of
$w$, and Theorem~\ref{thm:capacity}(ii) implies that if $w(H)<w(K)$ then
there is no contactomorphism $g\in G$ such that $\Ad_gH\leq K$.
\end{proof}

 \section{Invariants of domains from $\fC_k$}\label{sec:invariants}

\subsection{Floer-Hofer symplectic homology of bounded domains in $\R^{2n}$}

Filtered symplectic homology $SH^{(a,b)}(U)$ of a bounded open set $U$
in the standard symplectic $\R^{2n}$ was introduced by A.~Floer and
H.~Hofer in~\cite{FH94} as a far-reaching generalization of earlier
symplectic invariants, such as Gromov's symplectic width~\cite{Gr85},
and later symplectic capacities, see~\cite{HZ94}. Since then the
invariant has been greatly generalized and expanded, but for the purposes
of this paper we will use the original Floer-Hofer version up to the
following slight modification. Instead of taking as in~\cite{FH94} a
direct limit over Hamiltonians which are negative on $U$ and equal a
positive definite quadratic form at infinity, we will take an inverse
limit over nonpositive Hamiltonians with compact support in $U$. This
version enjoys the same functorial properties as the one
in~\cite{FH94} but will be more convenient for the computations below. For
domains with smooth boundary of restricted contact type, our version
of symplectic homology differs from the one in~\cite{FH94} only by a
degree shift of $-1$ (this follows e.g.~from the duality results in~\cite{CO}).
We use $\Z/2$-coefficients and grade all groups by Conley-Zehnder index.

Let $\DD$ denote the group of (not necessarily compactly supported)
Hamiltonian diffeomorphisms of $\R^{2n}$.
The following proposition summarizes some relevant properties of
symplectic homology, see~\cite{FH94,FHW94}.

\begin{theorem}[Floer-Hofer]\label{thm:SH-original}
Filtered symplectic homology assigns to each bounded open subset
$U\subset\R^{2n}$ and numbers $0\leq a<b<\infty$ a $\Z$-graded
$\Z/2$-vector space $SH^{(a,b)}(U)$ with the following properties.

(Functoriality) Each $\Psi\in\DD$ induces isomorphisms
$$
   \Psi_*:SH^{(a,b)}(U) \stackrel{\cong}\longrightarrow SH^{(a,b)}\bigl(\Psi(U)\bigr).
$$
(Transfer map) Each inclusion $\iota:U\into V$ induces a homomorphism
$$
   \iota_!:SH^{(a,b)}(V)\to SH^{(a,b)}(U)
$$
It follows that for $\Psi\in\DD$ with $\Psi(U)\subset V$, the inclusion
$\iota:\Psi(U)\into V$ together with $\Psi$ induces a homomorphism
$$
   \Psi_!:=\Psi_*^{-1}\circ\iota_!:SH^{(a,b)}(V)\to SH^{(a,b)}(U).
$$
(Isotopy invariance) For a smooth family $\Psi^s\in\DD$ with
   $\Psi^s(U)\subset V$ for all $s\in[0,1]$, the maps
   $\Psi^s_!:SH^{(a,b)}(V)\to SH^{(a,b)}(U)$ are independent of $s$.

(Window increasing homomorphism) For $0\leq a<b$ and $0\leq a'<b'$ with $a\leq
   a'$ and $b\leq b'$ we have natural homomorphisms
$$
   SH^{(a,b)}(U) \to SH^{(a',b')}(U).
$$
(Symplectic homology of a ball)
The symplectic homology in the action window $(0,c)$ of the ball
$B_a^{2n}$ of radius $a$ in $\R^{2n}$ is given by
$$
   SH_j^{(0,c)}(B_a^{2n}) \cong\begin{cases}
      \Z/2, & c>\pi a^2\text{ and }j=n, \\
      \Z/2, & c>\pi a^2\text{ and }j=n\Bigl(2\lfloor\frac{c}{\pi a^2}\rfloor-1\Bigr)-1, \\
      0, & \text{otherwise}.
   \end{cases}
$$
\end{theorem}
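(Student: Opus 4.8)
The statement collects the standard properties of filtered Floer--Hofer symplectic homology in the inverse-limit normalization announced above, so the plan is to recall the construction and isolate what must be verified for this particular convention. First I would fix a bounded open $U\subset\R^{2n}$ and consider the class of \emph{admissible} Hamiltonians $H\colon S^1\times\R^{2n}\to\R$ that are nonpositive, compactly supported in $S^1\times U$, and have only nondegenerate $1$-periodic orbits. After choosing a generic $S^1$-dependent almost complex structure $J$ compatible with $\om$, the generators of the complex $CF(H)$ are these orbits, graded by their Conley--Zehnder index $\CZ$, and the differential counts isolated solutions of the Floer equation $\p_s u+J(\p_t u-X_H(u))=0$ with the usual asymptotics. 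The three analytic inputs are energy bounds (from the action difference), transversality for generic $J$, and a compactness/confinement statement; the confinement is precisely where the compact-support convention enters, since one must argue by a maximum principle that a Floer trajectory asymptotic to orbits whose action lies in a fixed window cannot escape $U$, so that the count is finite and $\p^2=0$. Filtering by the action functional
$$
   \AA_H(x)=\int x^*\lambda_\st-\int_0^1 H(t,x(t))\,dt
$$
and passing to subquotients yields the filtered homologies $HF^{(a,b)}(H)$.

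Next I would build the comparison structure. For a monotone homotopy between two admissible Hamiltonians the continuation map respects the action filtration by the standard monotonicity estimate, is independent of the chosen homotopy up to chain homotopy, and composes correctly; this makes $\{HF^{(a,b)}(H)\}_H$ an inverse system indexed by admissible Hamiltonians, and I would \emph{define}
$$
   SH^{(a,b)}(U):=\varprojlim_{H}HF^{(a,b)}(H).
$$
The window-increasing homomorphism for $a\le a'$, $b\le b'$ is induced on each $HF$ by the inclusions of subcomplexes $CF^{<b}\into CF^{<b'}$ and $CF^{\le a}\into CF^{\le a'}$, passing to the quotient and then to the limit. Here one must check that the inverse system is tame enough (Mittag--Leffler) so that the limit is computed by any cofinal sequence and the derived functor $\varprojlim^1$ does not contribute; this, together with the confinement estimate above, is the main bookkeeping obstacle in adapting the Floer--Hofer package to the inverse-limit convention.

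The remaining properties are then formal, because the Floer complex is computed in the ambient $\R^{2n}$ and depends only on $(H,J)$, the domain entering solely through which Hamiltonians count as admissible. A Hamiltonian diffeomorphism $\Psi\in\DD$ is an exact symplectomorphism, so $H\mapsto H\circ\Psi^{-1}$ is an action-preserving bijection of admissible data from $U$ to $\Psi(U)$ intertwining differentials and continuation maps; passing to the limit gives the isomorphisms $\Psi_*$, and isotopy invariance is exactly the homotopy invariance of continuation maps applied to the family $\Psi^s$. For an inclusion $\iota\colon U\into V$, every Hamiltonian admissible for $U$ is admissible for $V$, so restricting a compatible family over the larger index set of $V$ to the sub-index-set of $U$ produces the transfer map $\iota_!\colon SH^{(a,b)}(V)\to SH^{(a,b)}(U)$; composing with $\Psi_*$ gives $\Psi_!$ and the asserted naturality.

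Finally I would compute $SH^{(0,c)}(B^{2n}_a)$ by taking admissible Hamiltonians that depend only on $r=|z|^2$ near $\p B_a$, with boundary slope sweeping upward through the Reeb length spectrum of $(S^{2n-1},\alpha_\st)$. The $1$-periodic orbits are a nondegenerate minimum deep inside the ball, which for every $c>\pi a^2$ contributes the class in degree $j=n$, together with the closed Reeb orbits on $aS^{2n-1}$, whose actions are the positive multiples of $\pi a^2$ and whose Conley--Zehnder indices produce the degree $j=n\bigl(2\lfloor c/\pi a^2\rfloor-1\bigr)-1$; taking the slope to infinity and restricting to the window $(0,c)$ yields the stated table (the degree shift by $-1$ relative to \cite{FH94} being absorbed here). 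The only genuinely new work beyond citing \cite{FH94,FHW94} is the confinement estimate of the first step and the tameness of the inverse limit in the second, which jointly justify the compact-support normalization; once the complex, the continuation maps, and the ball computation are in place, everything else follows.
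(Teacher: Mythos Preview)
The paper does not prove this theorem at all: it is stated as a summary of known properties with the sentence ``The following proposition summarizes some relevant properties of symplectic homology, see~\cite{FH94,FHW94},'' and the only additional remarks concern the choice of convention (inverse limit over nonpositive compactly supported Hamiltonians rather than direct limit over Hamiltonians negative on $U$ and quadratic at infinity) and the resulting degree shift by $-1$ for restricted contact type domains. Everything you wrote is therefore supplementary rather than a reconstruction of the paper's argument.

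That said, your sketch is a reasonable outline of what lies behind the citations, with one imprecision worth flagging. You write that a Floer trajectory ``cannot escape $U$''; but the Floer equation is posed on all of $\R^{2n}$, and the relevant compactness issue is an $L^\infty$ bound keeping trajectories inside some large ball, not confinement to $U$ itself. Outside $\supp H$ the equation is the ordinary Cauchy--Riemann equation for the standard $J$, and the bound follows from a maximum principle for $|u|^2$ (this is exactly the mechanism the paper later invokes in Lemma~\ref{lem:max-principle}). The domain $U$ enters only through the admissibility condition on $H$, as you correctly say elsewhere. With that correction your outline matches the content of \cite{FH94,FHW94}, including the ball computation and the degree shift; the Mittag--Leffler point you raise is handled in that literature by working with cofinal monotone families, so no genuinely new argument is needed for the inverse-limit normalization.
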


\subsection{Symplectic homology for domains from $\fC_k$}

We extend the definition of symplectic homology to unbounded open
domains $V\subset\R^{2n}$ by
$$
   SH^{(a,b)}(V) := \mathop{\lim}\limits_{\longleftarrow}SH^{(a,b)}(U),
$$
where the inverse limit is taken over all bounded open subsets
$U\subset V$. We also define symplectic homology in the infinite
action window $(a,\infty)$ as
$$
   SH^{(a,\infty)}(V) := \mathop{\lim}\limits_{c\to\infty} SH^{(a,c)}(V).
$$
The extended symplectic homology still satisfies the properties in
Theorem~\ref{thm:SH-original}. However, the invariants one can
extract from these general properties are not sufficient for our
purposes. Instead, we will concentrate on the special class $\fC_k$ of unbounded
domains introduced in Section~\ref{sec:special} above and study their
invariants under the smaller group $\DD_k$ which preserves this class.

We begin with the computation of symplectic homology of the
hyperboloids $V_k^{a,b}$.

\begin{prop}\label{prop:computation}
Let $1\leq k<n$ and $0<c<\infty$. Then:

(a) For $a,b>0$ we have
$$
   SH_j^{(0,c)}(V_k^{a,b}) \cong\begin{cases}
      \Z/2, & c>\pi a^2\text{ and }j=n-k, \\
      \Z/2, & c>\pi a^2\text{ and }j=(n-k)\Bigl(2\lfloor\frac{c}{\pi a^2}\rfloor-1\Bigr)-1, \\
      0, & \text{otherwise}.
   \end{cases}
$$
(b) For $\wt a>a$, $\frac{\wt b}{\wt a}<\frac ba$ and $c>\pi\wt a^2$ the transfer map
$$
   \Z/2=SH_{n-k}^{(0,c)}(V_k^{\wt a,\wt b})\to  SH_{n-k}^{(0,c)}(V_k^{a,b})=\Z/2
$$
is an isomorphism.

(c) For $c>\pi a^2$ the window increasing homomorphism
$$
   \Z/2=SH_{n-k}^{(0,c)}(V_k^{a,b})\to  SH_{n-k}^{(0,\infty)}(V_k^{a,b})=\Z/2
$$
is an isomorphism.
\end{prop}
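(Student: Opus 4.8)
The plan is to reduce the computation to the known case of a ball (Theorem~\ref{thm:SH-original}) by exhibiting, for suitable exhausting sequences of bounded domains, cofinal families whose symplectic homology is easy to compute, and then to analyze the resulting inverse limit. The key geometric observation is that the hyperboloid $V_k^{a,b} = \{\frac1{a^2}(|x|^2+\sum_1^{n-k}y_i^2) - \frac1{b^2}\sum_{n-k+1}^n y_i^2 < 1\}$ splits, up to linear symplectomorphism, as a product of a $2(n-k)$-dimensional ellipsoid (in fact a ball of radius $a$ after rescaling) with an unbounded "hyperbolic" factor in the remaining $2k$ coordinates that is exhausted by polydisc-like regions on which no short periodic orbits appear. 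More precisely, I would first perform a linear symplectic change of coordinates that turns $V_k^{a,b}$ into $\{|z'|^2/a^2 + q(z'') < 1\}$ where $z'\in\mathbb{C}^{n-k}$, $z''\in\mathbb{C}^k$, and $q$ is an indefinite quadratic form; then choose a cofinal sequence of bounded subdomains $U_R = V_k^{a,b}\cap\{|z''|\le R\}$ smoothed near the corner. On each $U_R$ one builds admissible Hamiltonians that are a negative quadratic form $-\epsilon|z'|^2$ in the elliptic directions and a steep nonpositive function in the $z''$ directions, so that the only $1$-periodic orbits of the Floer data are the constant orbit at $0$ and the orbits coming from the elliptic factor; the $z''$-directions contribute nothing to the Floer complex in the relevant action window because the Reeb-type dynamics on the hyperboloid's "ends" are free.

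For part (a), I would then run the standard Floer-Hofer computation on each $U_R$: the Floer complex is (a shift of) the Floer complex of the ball $B_a^{2(n-k)}$, tensored with the Morse complex of a function on $\mathbb{R}^{2k}$ with a single critical point of Morse index equal to the number of negative eigenvalues of $q$, which is $k$ (the coisotropic/indefinite directions). This index shift is exactly what replaces $n$ by $n-k$ in the formula: the generator in degree $n$ for the ball becomes a generator in degree $n - k$, and the "second" generator at degree $n(2\lfloor c/\pi a^2\rfloor - 1) - 1$ similarly shifts to $(n-k)(2\lfloor c/\pi a^2\rfloor - 1) - 1$. One must check that the differential still vanishes (there is nothing in adjacent degrees to map to, just as for the ball) and that the maps in the inverse limit over $R$ are all isomorphisms, so that $SH^{(0,c)}(V_k^{a,b}) = \varprojlim_R SH^{(0,c)}(U_R)$ is as claimed; the latter follows because for $R$ large the Floer data can be chosen $R$-independent on a fixed compact set carrying all the low-action orbits.

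For parts (b) and (c), the transfer map and window-increasing map are computed at the chain level using these explicit cofinal Hamiltonians. For (b), given $\wt a > a$ and $\wt b/\wt a < b/a$, one has $V_k^{a,b}\subset V_k^{\wt a,\wt b}$ only after possibly shrinking — actually the inclusion goes the way needed for the transfer map — and the transfer map is induced by a monotone homotopy of Hamiltonians that on the elliptic factor is the standard ball-inclusion transfer, which is an isomorphism on the degree-$(n-k)$ generator as long as $c > \pi\wt a^2$ (so that the generator exists for both hyperboloids); the hyperbolic directions contribute the identity. For (c), the window-increasing map $SH^{(0,c)}_{n-k}\to SH^{(0,\infty)}_{n-k}$ is an isomorphism because the degree-$(n-k)$ generator is the "fundamental class"-type generator coming from the constant orbit region (the minimum of the action), which persists and is never killed as the upper window bound increases — exactly as the degree-$n$ class for a ball generates $SH^{(0,\infty)}_n(B_a) = \mathbb{Z}/2$. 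I expect the main obstacle to be the careful construction of admissible (cofinal, smooth, with controlled periodic orbits) Hamiltonians on the smoothed truncations $U_R$ of the unbounded hyperboloid, together with the verification that the corner-smoothing and the truncation do not introduce spurious periodic orbits of small action in the window $(0,c)$ — in other words, ensuring that the "hyperbolic end" is genuinely invisible to filtered symplectic homology in the relevant range. Once that is set up, the index bookkeeping and the inverse-limit arguments are routine given Theorem~\ref{thm:SH-original}.
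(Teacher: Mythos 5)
Your overall strategy---exhaust the hyperboloid by bounded truncations, use the product structure to reduce to the ball $B_a^{2(n-k)}$ times a ``hyperbolic'' factor carrying no short closed characteristics, and deduce (b) and (c) from naturality of the resulting generator---is the same as the paper's. But there are two concrete problems. First, your index bookkeeping is inconsistent and, taken literally, yields the wrong answer. You describe the complex as the ball's complex tensored with a single Morse generator of index $k$ and assert that ``this index shift is exactly what replaces $n$ by $n-k$.'' A uniform degree shift by $k$ applied to the formula for $B_a^{2n}$ moves the second generator from degree $n\bigl(2\lfloor\frac{c}{\pi a^2}\rfloor-1\bigr)-1$ to $n\bigl(2\lfloor\frac{c}{\pi a^2}\rfloor-1\bigr)-1-k$, which differs from $(n-k)\bigl(2\lfloor\frac{c}{\pi a^2}\rfloor-1\bigr)-1$ as soon as $c>2\pi a^2$; no uniform shift converts the $B_a^{2n}$ answer into the $B_a^{2(n-k)}$ answer. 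The correct statement is that the hyperbolic factor contributes a single generator in degree $0$ (its critical point has middle Morse index $k$ in $\R^{2k}$), so the complex is that of $B_a^{2(n-k)}$ with no shift at all. The formula you wrote down is right, but not for the reason you give.

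Second, you explicitly defer ``the main obstacle''---producing truncations whose extra boundary carries no closed characteristics of action in $(0,c)$, and justifying the splitting of the Floer complex---and that is precisely where all the content lies. The paper does not smooth the corner of $V_k^{a,b}\cap\{|z''|\le R\}$; instead it replaces each planar Hamiltonian $\frac{x^2}{a^2}-\frac{y^2}{b^2}$ by $\frac{x^2}{a^2}+\frac{\wt g_C(y)}{b^2}$ with $\wt g_C$ convex for $|y|\ge C$, sets $V_C=\{G<1\}$ for the resulting sum of planar Hamiltonians, and proves that $\p V_C$ is of restricted contact type and that every closed characteristic not on the sphere of radius $a$ in $\R^{2(n-k)}$ has action at least $C^2B_{a,b}\to\infty$ (Lemmas~\ref{lm:2d} and~\ref{lem:VC}). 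Even granting that, identifying $SH^{(0,c)}(V_C)$ with the ball's homology requires deforming $\phi(F_1+F_2-1)$ to the split Hamiltonian $\phi(F_1)+\phi(F_2-1)$; since these Hamiltonians are not compactly supported, one needs a $C^0$-confinement estimate for the Floer cylinders of the interpolating family (Lemma~\ref{lem:max-principle}) together with a case-by-case action analysis of its $1$-periodic orbits (Lemma~\ref{lem:Hs}) to see that nothing enters or leaves the window $(0,c)$ during the deformation. Without these two ingredients your tensor-product description of the chain complex, and hence parts (a)--(c), remains an unproved assertion rather than ``routine bookkeeping.''
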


Heuristically, this result is easy to understand: All closed
characteristics on the boundary of $V_k^{a,b}$ satisfy
$z_{n-k+1}=\cdots=z_n=0$, so they agree with the closed
characteristics on the boundary of the ball $B_a^{2(n-k)}$ of radius
$a$ in $\R^{2(n-k)}$ whose symplectic homology is given in
Theorem~\ref{thm:SH-original}.
The actual proof of Proposition~\ref{prop:computation} will be given
in Section~\ref{sec:computation} below.

\begin{corollary}\label{cor:augmentation}
For any domain $V\in\fC_k$ there exists $c_0$ such that for each
$c\geq c_0$ and for all $a,b$ with $V^{a,b}_k\subset V$, the composition
$$
  \alpha_{V,c}:SH_{n-k}^{(0,c)}(V) \to SH_{n-k}^{(0,c)}(V_k^{a,b}) \to
  SH_{n-k}^{(0,\infty)}(V_k^{a,b})=\Z/2
$$
(where the first map is the transfer map and the second one
the window increasing homomorphism) is surjective.
\end{corollary}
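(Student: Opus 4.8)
The plan is to combine the three parts of Proposition~\ref{prop:computation} with the definition of the extended symplectic homology and the definition of $\fC_k$. Fix $V\in\fC_k$; by Definition~\ref{def:fCk} there exist $a_1,b_1$ with $V_k^{a_1,b_1}\subset V$, and we set $c_0:=2\pi a_1^2$ (any $c_0>\pi a_1^2$ works, but a concrete choice is convenient). First I would check that for \emph{any} $a,b$ with $V_k^{a,b}\subset V$ we automatically have $a\leq a_1$: indeed the inclusion $V_k^{a,b}\subset V$ together with $V_k^{a_1,b_1}\subset V$ does not by itself force this, so instead I would simply restrict attention to those $(a,b)$, note that $c\geq c_0>\pi a_1^2\geq\pi a^2$ is what we need, and replace the naive claim by the honest one: the statement of the corollary already quantifies over all $a,b$ with $V_k^{a,b}\subset V$, so what must be shown is that the composition $\alpha_{V,c}$ is surjective for each such pair once $c\geq c_0$. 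For that it suffices that $c>\pi a^2$, which holds because $V_k^{a,b}\subset V_k^{a',b'}$ for suitable $a'\geq a$ small enough forces $a\leq a'$; more cleanly, since $V$ is a fixed bounded-below-at-the-origin domain, there is an a priori bound $a^*:=\sup\{a: V_k^{a,b}\subset V\text{ for some }b\}<\infty$, and we set $c_0:=2\pi (a^*)^2$, so $c\geq c_0$ gives $c>\pi a^2$ for every admissible pair.

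Next I would address each of the two maps in the composition. The second map, the window increasing homomorphism $SH_{n-k}^{(0,c)}(V_k^{a,b})\to SH_{n-k}^{(0,\infty)}(V_k^{a,b})$, is an isomorphism $\Z/2\to\Z/2$ by Proposition~\ref{prop:computation}(c), valid precisely because $c>\pi a^2$; this is where the choice of $c_0$ is used. So surjectivity of $\alpha_{V,c}$ reduces to surjectivity of the transfer map $SH_{n-k}^{(0,c)}(V)\to SH_{n-k}^{(0,c)}(V_k^{a,b})$. Here I would unwind the definition $SH^{(0,c)}(V)=\varprojlim_U SH^{(0,c)}(U)$ over bounded open $U\subset V$: the transfer map to $SH^{(0,c)}(V_k^{a,b})$ factors, for each bounded $U$ with $V_k^{a,b}\subset U\subset V$ (such $U$ exist and are cofinal in the inverse system, since $V_k^{a,b}$ is relatively compact when restricted appropriately — actually $V_k^{a,b}$ is unbounded, so I would instead use exhausting bounded $U_m\nearrow V$ and compare with bounded truncations of $V_k^{a,b}$), through $SH^{(0,c)}(U)\to SH^{(0,c)}(V_k^{a,b})$.

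The cleanest route, and the one I would take to avoid fighting with the unboundedness of the hyperboloids inside the inverse limit, is to interpose an intermediate hyperboloid. By Definition~\ref{def:fCk} and the discussion after it, from $V_k^{a,b}\subset V$ we may also find, after possibly shrinking, a pair $(\wt a,\wt b)$ with $\wt a>a$, $\wt b/\wt a<b/a$ and $V_k^{a,b}\subset V_k^{\wt a,\wt b}$; but we need the \emph{other} inclusion $V_k^{\wt a,\wt b}\subset V$, which is not available. So instead I would use that $V$ itself is squeezed, $V_k^{a,b}\subset V\subset\Phi(V_k^{a_2,b_2})$, and chase the transfer maps in the composition $SH^{(0,c)}(\Phi(V_k^{a_2,b_2}))\to SH^{(0,c)}(V)\to SH^{(0,c)}(V_k^{a,b})$, together with the functoriality isomorphism $\Phi_*$ from Theorem~\ref{thm:SH-original}. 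By Proposition~\ref{prop:computation}(b), the composite transfer map $SH_{n-k}^{(0,c)}(V_k^{a_2,b_2})\to SH_{n-k}^{(0,c)}(V_k^{a,b})$ — once we arrange $a_2>a$ and $b_2/a_2<b/a$, which we may by enlarging $a_2$ and shrinking $b_2$, this being allowed since the original squeezing inclusion only improves — is an isomorphism $\Z/2\to\Z/2$. Since this composite factors through $SH_{n-k}^{(0,c)}(V)$, the second factor $SH_{n-k}^{(0,c)}(V)\to SH_{n-k}^{(0,c)}(V_k^{a,b})$ is necessarily surjective. Composing with the window-increasing isomorphism from part (c) gives surjectivity of $\alpha_{V,c}$, as required. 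The main obstacle I anticipate is the bookkeeping needed to legitimately insert the intermediate hyperboloids with the correct inequalities $a_2>a$, $b_2/a_2<b/a$ while preserving all the relevant inclusions — i.e.\ checking that the parameters supplied by Definition~\ref{def:fCk} can be adjusted so that Proposition~\ref{prop:computation}(b) applies — and making sure the transfer maps genuinely compose as claimed through the inverse-limit group $SH^{(0,c)}(V)$, which requires invoking the naturality of the transfer maps with respect to nested bounded exhaustions.
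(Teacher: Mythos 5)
Your overall strategy is the same as the paper's (sandwich $V$ between hyperboloids and combine Proposition~\ref{prop:computation}(b) and (c)), but there is a genuine gap at the crucial step. You want to apply Proposition~\ref{prop:computation}(b) to ``the composite transfer map $SH_{n-k}^{(0,c)}(V_k^{a_2,b_2})\to SH_{n-k}^{(0,c)}(V_k^{a,b})$'' obtained by chasing $SH^{(0,c)}(\Phi(V_k^{a_2,b_2}))\to SH^{(0,c)}(V)\to SH^{(0,c)}(V_k^{a,b})$ through the functoriality isomorphism $\Phi_*$. But Proposition~\ref{prop:computation}(b) is a statement about the transfer map of an honest inclusion of hyperboloids, whereas your composite is $\Phi_*^{-1}\circ\iota_!$ for the inclusion $V_k^{a,b}\subset\Phi(V_k^{a_2,b_2})$ — a map that a priori depends on $\Phi$. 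To identify it with an inclusion-induced transfer map you must invoke the isotopy invariance from Theorem~\ref{thm:SH-original}, and that requires $\Phi_s^{-1}(U)\subset V_k^{a_2,b_2}$ for \emph{all} $s\in[0,1]$ for the domain $U$ you are transferring to. With $U=V_k^{a,b}$ this fails: at $s=0$ you would need $V_k^{a,b}\subset V_k^{a_2,b_2}$, which is not available (only $V_k^{a,b}\subset\Phi(V_k^{a_2,b_2})$ is), and at intermediate $s$ the isotopy may move $V_k^{a,b}$ out of $V_k^{a_2,b_2}$ entirely. The paper's proof resolves exactly this by inserting a further, innermost hyperboloid $V_k^{a_1,b_1}\subset V_k^{a,b}$ on which the whole isotopy $\Phi_s$ is the identity, so that $\Phi_s^{-1}(V_k^{a_1,b_1})=V_k^{a_1,b_1}\subset V_k^{a_2,b_2}$ for all $s$ and isotopy invariance identifies the long composite ending at $SH_{n-k}^{(0,c)}(V_k^{a_1,b_1})$ with the plain transfer map $SH_{n-k}^{(0,c)}(V_k^{a_2,b_2})\to SH_{n-k}^{(0,c)}(V_k^{a_1,b_1})$, to which (b) applies. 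Since that composite factors through $SH_{n-k}^{(0,c)}(V)\to SH_{n-k}^{(0,c)}(V_k^{a,b})$, surjectivity of the latter follows. You mention $V_k^{a_1,b_1}$ at the start but then drop it from the final chain, which is precisely where the argument breaks.

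A second, related problem is your choice of $c_0$. You set $c_0:=2\pi(a^*)^2$ with $a^*=\sup\{a:\ V_k^{a,b}\subset V\ \text{for some }b\}$ and assert $a^*<\infty$; but finiteness of $a^*$ is essentially the non-squeezing statement this whole section is establishing, so it cannot be assumed. Likewise, ``enlarging $a_2$'' so that $a_2>a$ for the given pair would make $c_0$ depend on $(a,b)$, violating the quantifier order in the corollary ($c_0$ must depend on $V$ only). The paper sidesteps both issues: it fixes $c_0=\pi a_2^2$ from the outer hyperboloid of the squeezing of $V$, proves the long composite is an isomorphism for $c>c_0$, and then reads off from Proposition~\ref{prop:computation}(a) that $SH_{n-k}^{(0,c)}(V_k^{a,b})=\Z/2$, which \emph{a posteriori} forces $c>\pi a^2$ so that part (c) applies. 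Your reduction of the corollary to surjectivity of the transfer map, and the final composition with the window-increasing isomorphism, are correct; the gap is confined to the two points above, but the first of them is the heart of the proof.
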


\begin{proof}
Let $V\in\fC_k$ and $V^{a,b}_k\subset V$. By definition, there exist
$a_1,b_1,a_2,b_2>0$ and $\Phi\in\DD_k$ such that $V_k^{a_1,b_1}\subset V\subset \Phi(V_k^{a_2,b_2})$.
We choose $a_1$ so small that
$$
   V_k^{a_1,b_1}\subset V^{a,b}_k\subset V\subset \Phi(V_k^{a_2,b_2})
$$
and $\Phi_s=\id$ on $V_k^{a_1,b_1}$ for all $s\in[0,1]$, where
$\Phi_s$ is the isotopy in $\DD_k$ from the identity to $\Phi_1=\Phi$.
Set $c_0:=\pi a_2^2$ and consider $c>c_0$.
Then $\Phi_s^{-1}(V_k^{a_1,b_1})=V_k^{a_1,b_1}\subset V_k^{a_2,b_2}$,
so by isotopy invariance the map
$$
   (\Phi_s^{-1})_!:\Z/2=SH_{n-k}^{(0,c)}(V_k^{a_1,b_1})\to SH_{n-k}^{(0,c)}(V_k^{a_2,b_2})=\Z/2
$$
is independent of $s\in[0,1]$, hence an isomorphism by Proposition~\ref{prop:computation}(b).
It follows that the composition of the obvious maps
\begin{align*}
   \Z/2 &= SH_{n-k}^{(0,c)}(V_k^{a_2,b_2}) \cong SH_{n-k}^{(0,c)}\bigl(\Phi(V_k^{a_2,b_2})\bigr)
   \to SH_{n-k}^{(0,c)}(V) \cr &\to SH_{n-k}^{(0,c)}(V_k^{a,b}) \to SH_{n-k}^{(0,c)}(V_k^{a_1,b_1})=\Z/2
\end{align*}
is an isomorphism. This implies that
$SH_{n-k}^{(0,c)}(V_k^{a,b})=\Z/2$ and the map $SH_{n-k}^{(0,c)}(V)
\to SH_{n-k}^{(0,c)}(V_k^{a,b})$ is surjective, which combined with
Proposition~\ref{prop:computation}(c) proves the corollary.
\end{proof}

\subsection{A capacity for domains from $\fC_k$}
For any $c>0$ and $V\in\fC_k$ we define the {\em augmentation}
$$
  \alpha_{V,c}:SH_{n-k}^{(0,c)}(V) \to SH_{n-k}^{(0,\infty)}(V_k^{a,b})=\Z/2
$$
as in Corollary~\ref{cor:augmentation}, where $V^{a,b}_k\subset V$.
For another hyperboloid $V^{a',b'}_k\subset V$ we find a hyperboloid
$V^{a_1,b_1}_k\subset V^{a,b}_k\cap V^{a',b'}_k$, and the commuting diagram
\begin{equation*}
\xymatrixcolsep{6pc}
\xymatrix{SH^{(0,c)}_{n-k}(V) \ar[r] \ar[d] &
   SH^{(0,\infty)}_{n-k}(V^{a,b}_k)=\Z/2 \ar[d]^{\cong} \\
   \Z/2=SH^{(0,c)}_{n-k}(V_k^{a',b'})\ar[r]^{\cong} &
   SH^{(0,\infty)}_{n-k}(V_k^{a_1,b_1})=\Z/2 }.
\end{equation*}
shows that $\alpha_{V,c}$ does not depend on the choice of $a,b$.
Corollary~\ref{cor:augmentation} shows that $a_{V,c}$ is trivial for
sufficiently small $c$ and surjective for sufficiently large $c$.
The following corollary now is immediate from the properties of symplectic
homology in Theorem~\ref{thm:SH-original}.

\begin{cor}\label{cor:augm}
(i) For $\Psi\in\DD_k$ and $a,b$ such that $\Psi=\id$ on $V_k^{a,b}$
  we have the commutative diagram
\begin{equation*}
\xymatrixcolsep{6pc}
\xymatrix{SH^{(0,c)}_{n-k}(V) \ar[r]^{a_{V,c} } \ar[d]^{\Psi_*} &
   SH^{(0,\infty)}_{n-k}(V^{a,b}_k)=\Z/2 \ar[d]^{\id} \\
   SH^{(0,c)}_{n-k}\Bigl(\Psi(V)\Bigr)\ar[r]^{a_{\Psi(V),c}} &
   SH^{(0,\infty)}_{n-k}\Bigl(\Psi(V^{a,b}_k)=V^{a,b}_k\Bigr)=\Z/2\;.}
\end{equation*}
(ii) For an inclusion $\iota:V\into V'$ we have the commuting diagram
\begin{equation*}
\xymatrixcolsep{6pc}
\xymatrix{SH^{(0,c)}_{n-k}(V') \ar[r]^{a_{V',c} } \ar[d]^{\iota_!} &
   \Z/2 \ar[d]^{\id} \\
   SH^{(0,c)}_{n-k}(V)\ar[r]^{a_{V,c}} & \Z/2\;.}
\end{equation*}
\end{cor}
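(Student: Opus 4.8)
The plan is to unravel the definition of the augmentation $\alpha_{V,c}$ given in Corollary~\ref{cor:augmentation} and reduce each of the two squares to a single naturality property of the Floer--Hofer transfer map. Recall that $\alpha_{V,c}$ is, for any hyperboloid $V_k^{a,b}\subset V$, the composition of the transfer map $SH_{n-k}^{(0,c)}(V)\to SH_{n-k}^{(0,c)}(V_k^{a,b})$ with the window-increasing homomorphism to $SH_{n-k}^{(0,\infty)}(V_k^{a,b})=\Z/2$, and that we have already verified this is independent of the choice of $a,b$. In both parts the window-increasing homomorphism appears unchanged on the two sides of the square, so in each case it suffices to check commutativity at the level of transfer maps.

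For part (i), I would fix a hyperboloid $V_k^{a,b}$ on which $\Psi=\id$; this is possible by hypothesis, and it forces $\Psi(V_k^{a,b})=V_k^{a,b}$, so that the \emph{same} hyperboloid computes both $\alpha_{V,c}$ and $\alpha_{\Psi(V),c}$. Writing $\iota\colon V_k^{a,b}\into V$ and $\iota'\colon V_k^{a,b}=\Psi(V_k^{a,b})\into\Psi(V)$ for the two inclusions, the required identity is $\iota'_!\circ\Psi_*=\iota_!$. This is exactly the naturality of the transfer map under the symplectomorphism $\Psi$: for an inclusion $U\into V$ and $\Psi\in\DD$ one has $\iota'_!\circ\Psi_*=(\Psi|_U)_*\circ\iota_!$, and here $\Psi|_{V_k^{a,b}}=\id$, so the right-hand factor is the identity. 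Appending the common window-increasing homomorphism yields the first commuting square.

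For part (ii), I would choose a single hyperboloid $V_k^{a,b}\subset V\subset V'$, which exists because $V\in\fC_k$, and compute both $\alpha_{V,c}$ and $\alpha_{V',c}$ with this hyperboloid. The transfer part of $\alpha_{V',c}$ uses the inclusion $V_k^{a,b}\into V'$, which factors as $V_k^{a,b}\into V\into V'$. By functoriality of the transfer map under composition of inclusions, the transfer $SH_{n-k}^{(0,c)}(V')\to SH_{n-k}^{(0,c)}(V_k^{a,b})$ equals the transfer $SH_{n-k}^{(0,c)}(V)\to SH_{n-k}^{(0,c)}(V_k^{a,b})$ precomposed with $\iota_!\colon SH_{n-k}^{(0,c)}(V')\to SH_{n-k}^{(0,c)}(V)$. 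Appending the unchanged window-increasing homomorphism gives $\alpha_{V,c}\circ\iota_!=\alpha_{V',c}$, as claimed.

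The one point to keep in mind is that both naturality inputs---compatibility of transfer with an ambient symplectomorphism in (i), and with composition of inclusions in (ii)---are part of the standard functorial package for Floer--Hofer symplectic homology summarized in Theorem~\ref{thm:SH-original}, so no genuine obstacle arises. The only choices to be made are the hyperboloids, and the hypotheses of each part ($\Psi=\id$ on $V_k^{a,b}$, respectively $V\subset V'$) are precisely what permit a common hyperboloid to be used on both sides of the square; independence of $\alpha_{V,c}$ from that choice, already established, ensures the conclusion does not depend on it.
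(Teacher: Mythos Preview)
Your proposal is correct and is exactly the argument the paper has in mind: the paper simply declares the corollary ``immediate from the properties of symplectic homology in Theorem~\ref{thm:SH-original}'', and what you have written is the unpacking of that sentence via naturality of the transfer map under symplectomorphisms and under composition of inclusions.
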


\subsection{Proof of Theorem~\ref{thm:capacity}}\label{sec:cap-proof}
As in~\cite{FHW94}, we define the capacity-like function $w:\fC_k\to(0,\infty]$ on $V\in\fC^k$ by
$$
   w(V) := \inf\{c \mid a_{V,c}:SH^{(0,c)}_{n-k}(V)\to \Z/2 \;\hbox{is
     surjective}\}.
$$
Let us verify that the function $w$ has all the properties in
Theorem~\ref{thm:capacity}. Properties (i) and (ii) follow from
the commuting diagrams in Corollary~\ref{cor:augm}, and (iv) follows
from Proposition~\ref{prop:computation}. Property (iii) follows as
in~\cite{FHW94}, noting that the conformal rescaling $z\mapsto sz$ on
$\R^{2n}$ preserves the class of domains $\fC_k$ and multiplies
actions by $s^2$.

\subsection{Proof of Proposition~\ref{prop:computation}}\label{sec:computation}

To compute symplectic homology of the unbounded domain
$V=V^{(a,b)}_k$, we first need to choose an increasing family
$V_C\subset V$ of bounded subdomains exhausting $V$. By definition of
the inverse limit it is up to us which sequence to choose, and we do
it  carefully to control the dynamics of the arising Hamiltonian
vector fields.

For $C>0$ set $g_C(t):=\max(-t^2, 3t^2-4C^2)$ and let $\wt
g_C:\R\to\R$ be a smoothing of the function $g_C$. We do the smoothing
such that for small $\eps>0$
$$
   \wt g_C(t) = \begin{cases}-t^2,&    |t|\leq C-\eps, \cr
   3t^2-4C^2, &  |t|\geq C+\eps,
\end{cases}
$$
and $\wt g_C$ has minima at the points $\pm  C$ with the minimal value $g_C(C)=-C^2+\eps$.
Moreover, we can choose $\wt g_C$ such that
\begin{equation}\label{eq:gC}
   \frac{t}{2}\wt g_C(t)\geq \wt g_C(t)\quad\text{for all }t\in\R.
\end{equation}
Consider the function $H_C(x,y):= \frac{x^2}{a^2}+\frac{\wt g_C(y)}{b^2}$
on $\R^2$, thus
$$
   H_C(x,y) = \begin{cases}
      \frac{x^2}{a^2}-\frac{y^2}{b^2}, & |y|\leq C-\eps, \cr
      -\frac{4C^2}{b^2}+\frac{x^2}{a^2}+\frac{3y^2}{b^2}, & |y|\geq C+\eps.
    \end{cases}
$$
The critical points of $H_C$ are a saddle point at the origin and two
minima at $(0,\pm C)$ with the minimal value $\frac{C^2+\eps}{b^2}$.
Note that $H_C(x,y)\geq\frac{x^2}{a^2}-\frac{y^2}{b^2}$.
Moreover, condition~\eqref{eq:gC} implies
\begin{equation}\label{eq:HC}
   \frac12\Bigl(x\frac{\p H_C}{\p x}+y\frac{\p H_C}{\p y}\Bigr)
   = \frac{x^2}{a^2}+\frac{y\wt g_C'(y)}{2b^2} \geq H_C(x,y).
\end{equation}

\begin{lemma}\label{lm:2d}
The Hamiltonian system of $H_C$ on the plane $(\R^2,dx\wedge dy)$ has
the folowing properties:
\begin{enumerate}
\item all its trajectories except the two homoclinic orbits at the origin are closed;
\item for every closed trajectory $\gamma$ we have $\int_\gamma xdy\geq 0$;
\item for every nonconstant closed trajectory $\gamma$ with
$H_C|_\gamma\geq-\frac{C^2}{4b^2}$ we have
$\int_\gamma xdy\geq C^2A_{a,b}$, with a constant $A_{a,b}$ depending only on $a$ and $b$.
\end{enumerate}
\end{lemma}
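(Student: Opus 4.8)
The plan is to analyze the Hamiltonian flow of $H_C$ directly, exploiting that $H_C(x,y) = \frac{x^2}{a^2} + \frac{\wt g_C(y)}{b^2}$ is a ``mechanical-type'' Hamiltonian with a one-dimensional potential $\frac{\wt g_C(y)}{b^2}$. For part (i), I would examine the level sets of $H_C$. The critical points are the saddle at the origin (critical value $0$) and the two minima at $(0,\pm C)$ (critical value $\frac{\eps - C^2}{b^2}<0$). Since $\wt g_C(y)\to+\infty$ as $|y|\to\infty$ and $\wt g_C$ has exactly the shape of a double well (two minima, one local max at $0$), each regular level set $\{H_C = h\}$ is either one oval (for $h<0$, encircling one or both wells depending on $h$) or a pair of ovals (for the two separate wells when $\frac{\eps-C^2}{b^2}<h<0$). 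The only non-closed orbits occur on the critical level $h=0$: these are the two homoclinic loops attached to the origin (the boundary of the figure-eight). Every other orbit lies on a compact regular level set, hence is a closed periodic orbit. This is the standard phase portrait of a double-well oscillator and I do not expect difficulty here.

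For parts (ii) and (iii), the key identity is $\int_\gamma x\,dy$, which by Stokes equals the signed area enclosed by $\gamma$ when $\gamma$ is the boundary of a region $D_\gamma$, traversed so that $D_\gamma$ lies on its left. Since the flow of $H_C$ (with the convention that $X_{H_C}$ satisfies $i_{X_{H_C}}(dx\wedge dy)=-dH_C$, so orbits run counterclockwise around minima) traverses each oval in the direction making the enclosed sublevel region lie to the left, we get $\int_\gamma x\,dy = \area(D_\gamma)\geq 0$, which is (ii). (If both wells are enclosed by a single oval the same sign holds; and the homoclinic orbits, though excluded, also bound region of nonnegative area, so no orientation subtlety arises.) The main point of (iii) is the \emph{quantitative lower bound}: if $\gamma\subset\{H_C\geq -\frac{C^2}{4b^2}\}$, then $\gamma$ encloses a region containing one of the wells at $(0,\pm C)$, and I must show this region has area at least $C^2 A_{a,b}$.

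For the quantitative estimate I would argue as follows. Fix $h$ with $\frac{\eps-C^2}{b^2}\le h\le -\frac{C^2}{4b^2}$ and consider the oval around, say, $(0,+C)$. On this oval $\frac{x^2}{a^2} = h - \frac{\wt g_C(y)}{b^2}$, so the oval is symmetric in $x$ and its area is $2\int a\sqrt{h - \wt g_C(y)/b^2}\,dy$ over the $y$-interval where the integrand is nonnegative. It suffices to bound this below by $C^2 A_{a,b}$ uniformly. The crucial observation is that for $|y - C|\le \eps$ (in fact for $|y|\le C-\eps$, in the regime $C$ large) $\wt g_C(y)=-y^2$ near the well, so near $y=C$ we have $\wt g_C(y)/b^2 \approx -C^2/b^2$; combined with $h\le -C^2/(4b^2)$ the orbit must reach $y$-values where $-\wt g_C(y)/b^2 \ge -h \ge C^2/(4b^2)$, and a direct computation with the explicit quadratic pieces of $\wt g_C$ shows the $y$-interval of integration has length $\gtrsim C$ and the integrand is $\gtrsim \sqrt{C^2}\sim C$ on a sub-interval of length $\gtrsim C$, giving area $\gtrsim a C^2 / b \cdot(\text{const})$. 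Hence $A_{a,b}$ can be taken proportional to $a/b$ times an absolute constant. I expect the main obstacle to be bookkeeping: making the estimate uniform across the whole admissible range of $h$ (including values close to the well bottom $\frac{\eps-C^2}{b^2}$, where the oval is small — but there $h$ is also very negative, contradicting $h\ge -C^2/(4b^2)$ once $C$ is large, so this range is in fact excluded), and checking the degenerate case where a single oval encircles both wells (there the area is only larger). A clean way to avoid case analysis is to note that any $\gamma$ with $H_C|_\gamma \ge -C^2/(4b^2)$ must separate $(0,C)$ or $(0,-C)$ from infinity, hence encloses at least the sublevel component $\{H_C \le -C^2/(4b^2)\}$ around that well, and estimate the area of \emph{that} fixed region from below once and for all.
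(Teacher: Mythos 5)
Your proposal is correct and follows essentially the same route as the paper: the figure-eight phase portrait gives (i) and (ii), and for (iii) the paper does exactly what you call the ``clean way'' --- it bounds from below the area of the fixed sublevel region $V_\pm=\{H_C\le -\frac{C^2}{4b^2}\}$ around one well and then uses the nesting of enclosed regions for trajectories of larger value. The only cosmetic difference is how that area is bounded: the paper observes that $V_\pm\cap\{\pm y\ge C+\eps\}$ coincides with a cap of the explicit ellipse $\{\frac{x^2}{a^2}+\frac{3y^2}{b^2}\le\frac{15C^2}{4b^2}\}$ and rescales by $C$, instead of estimating the integral $2a\int\sqrt{h-\wt g_C(y)/b^2}\,dy$ directly as you sketch.
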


\begin{proof}
The zero level set of $H_C$ forms a figure eight consisting of the two
homoclinic orbits at the origin and enclosing the two
minima. Assertions (i) and (ii) follow from this picture.
For (iii), consider the closed trajectories $\gamma_\pm$ of value
$H_C|_{\gamma_\pm}=-\frac{C^2}{4b^2}$
enclosing the points $(0,\pm C)$. Let $V_\pm$ be the region bounded
by $\gamma_\pm$. Then $V_\pm\cap\{\pm y\geq C+\eps\}=E\cap\{\pm y\geq
C+\eps\}$ for the ellipse $E=\{\frac{x^2}{a^2}+\frac{3y^2}{b^2}\leq
\frac{15 C^2}{4b^2}\}$, and rescaling by $C$ shows that the area of
$V_\pm\cap\{\pm y\geq C+\eps\}$ equals $C^2A_{a,b}$ with a constant
$A_{a,b}$ depending only on $a$ and $b$. This proves
$\int_{\gamma_\pm} xdy\geq C^2A_{a,b}$, and the area of each
nonconstant closed trajectory $\gamma$ with $H_C|_\gamma>-\frac{C^2}{4b^2}$
is larger than this one.
\end{proof}

Define now the open domain
$$
   V_C := \{G < 1\}\subset\R^{2n}
$$
with the Hamiltonian
$$
   G(x_1,\dots, x_n, y_1,\dots, y_n) :=
   \sum_1^{n-k}\frac{x_j^2+y_j^2}{a^2} + \sum_{n-k+1}^n H_C(x_j,y_j).
$$

\begin{lemma}\label{lem:VC}
$V_C$ is a bounded domain contained in $V_k^{a,b}$ with smooth restricted
contact type boundary $\p V_C$. The closed characteristics on $\p V_C$
fall into two groups:
\begin{enumerate}
\item closed characteristics on the sphere $S$ of radius $a$ in the subspace
$\R^{2(n-k)}\subset\R^{2n}$, of actions $k\pi a^2$ for $k\in\N$;
\item all other closed characteristics have actions $\geq C^2B_{a,b}$,
  with a constact $B_{a,b}$ depending only on $a$ and $b$.
\end{enumerate}
\end{lemma}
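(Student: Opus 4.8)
The plan is to verify the three claimed properties of $V_C$ directly from its defining Hamiltonian $G$, reducing everything to the one-dimensional analysis already carried out in Lemma~\ref{lm:2d} together with the standard picture for the round sphere.

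First I would check that $V_C$ is bounded, contained in $V_k^{a,b}$, and has boundary of restricted contact type. Boundedness is clear since $\wt g_C(t) = 3t^2 - 4C^2$ for $|t|$ large, so $G$ is proper (a sum of positive-definite quadratic forms at infinity); hence $\{G<1\}$ is bounded. The inclusion $V_C \subset V_k^{a,b}$ follows from the pointwise inequality $H_C(x,y) \geq \tfrac{x^2}{a^2} - \tfrac{y^2}{b^2}$, which gives $G(x,y) \geq \tfrac1{a^2}\bigl(\sum_1^{n-k}(x_j^2+y_j^2) + \sum_{n-k+1}^n x_j^2\bigr) - \tfrac1{b^2}\sum_{n-k+1}^n y_j^2$, so $\{G<1\} \subset V_k^{a,b}$. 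For the restricted contact type condition one uses the Euler-type inequality~\eqref{eq:HC}: combining it with the obvious identity $\tfrac12(x_j \p_{x_j} + y_j\p_{y_j})\bigl(\tfrac{x_j^2+y_j^2}{a^2}\bigr) = \tfrac{x_j^2+y_j^2}{a^2}$ for the first $n-k$ coordinates, one gets $\tfrac12\, dG(Z_{\mathrm{rad}}) \geq G$ where $Z_{\mathrm{rad}}$ is the radial Liouville field $\tfrac12\sum(x_i\p_{x_i}+y_i\p_{y_i})$. On $\p V_C = \{G=1\}$ this yields $dG(Z_{\mathrm{rad}}) \geq 2 > 0$, so $Z_{\mathrm{rad}}$ is outward transverse to $\p V_C$; since $Z_{\mathrm{rad}}$ is the standard (global, hence complete) Liouville field, $\p V_C$ is of restricted contact type, and smoothness follows from the smoothness of $\wt g_C$ and the fact that $1$ is a regular value (which one checks at the finitely many critical points of $G$, all of which lie well below level $1$ for $C$ large).

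Next I would analyze the closed characteristics. Because $G$ splits as a sum over the $k+1$ coordinate blocks (the single $\R^{2(n-k)}$-block with Hamiltonian $\tfrac1{a^2}|z|^2$, and $k$ planar blocks each with Hamiltonian $H_C$), the Hamiltonian flow of $G$ restricted to $\p V_C$ is the product of the individual flows, reparametrized. A periodic orbit must be periodic in each factor simultaneously. In each planar $H_C$-factor, Lemma~\ref{lm:2d}(i) says every orbit except the two homoclinics is closed, and in the $\R^{2(n-k)}$-factor every orbit of $\tfrac1{a^2}|z|^2$ on a sphere is closed; the resonance/commensurability of periods then forces, as in the classical computation for ellipsoids, that a closed characteristic either lives entirely in one factor (the others being at a critical point) or is a genuine product of nonconstant closed orbits. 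The action of a closed characteristic is $\oint \lambda_{\mathrm{st}} = \sum (\text{areas enclosed in each factor})$, using $\lambda_{\mathrm{st}} = \sum \tfrac12(x_j dy_j - y_j dx_j)$ and Stokes. For group~(1): if the orbit lies in the $\R^{2(n-k)}$-factor with the planar factors sitting at their minima $(0,\pm C)$, the minimal value of $\sum_{n-k+1}^n H_C$ contributes, but for it to lie on $\p V_C = \{G=1\}$ with only the $\R^{2(n-k)}$-component moving we need that component on the sphere $S$ of radius $a$ (so that $\tfrac1{a^2}|z|^2 = 1$ there, the planar parts contributing their minimum which tends to $-\infty$... here I would be slightly careful and instead note the characteristics of group~(1) are the obvious Reeb orbits on the round sphere $S \subset \R^{2(n-k)}\times\{0\}$, whose actions are $k\pi a^2$, $k\in\N$, matching the standard computation). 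For group~(2): any closed characteristic not of this form has a nonconstant component in some planar $H_C$-factor; by Lemma~\ref{lm:2d}(ii) all planar area contributions are nonnegative, and I would argue that the constraint $G=1$ together with the fact that $\min H_C = -\tfrac{C^2+\eps}{b^2}$ is very negative forces at least one nonconstant planar component to have $H_C|_\gamma \geq -\tfrac{C^2}{4b^2}$ (otherwise all $k$ planar components sit near their minima, driving $G$ to roughly $1 + \tfrac{kC^2}{b^2} > 1$ on the remaining sphere, contradiction, for $C$ large); then Lemma~\ref{lm:2d}(iii) gives that component's area $\geq C^2 A_{a,b}$, and adding the nonnegative contributions of the other factors yields total action $\geq C^2 B_{a,b}$ with $B_{a,b} := A_{a,b}$.

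The main obstacle I anticipate is the careful bookkeeping of which product orbits can actually be periodic and of pinning down the action lower bound in group~(2): one has to rule out the scenario where the $\R^{2(n-k)}$-component carries all the "energy" while the planar components loiter near their minima contributing large negative $H_C$ but vanishing area, thereby evading the $C^2 A_{a,b}$ bound. The resolution is the quantitative one sketched above — the level constraint $G=1$ cannot be met with all planar components simultaneously deep in their wells (each well has depth $\sim C^2/b^2$, and there are $k\geq 1$ of them), so some nonconstant planar component is forced up to the regime $H_C \geq -C^2/(4b^2)$ where Lemma~\ref{lm:2d}(iii) applies. Once this is in place, the remaining identifications — that group~(1) reproduces exactly the sphere's characteristics and their actions $k\pi a^2$, and that $\p V_C$ is nondegenerate enough (after a further $C^2$-small perturbation if needed) for the Floer-theoretic computation in Proposition~\ref{prop:computation} to go through — are routine and parallel the classical ellipsoid case.
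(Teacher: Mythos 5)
Most of your argument runs parallel to the paper's proof: boundedness and the inclusion $V_C\subset V_k^{a,b}$ from $H_C\geq \frac{x^2}{a^2}-\frac{y^2}{b^2}$, transversality of the radial Liouville field via the Euler-type inequality~\eqref{eq:HC}, complete integrability of $G$ with the block integrals, identification of group~(1) with the Reeb orbits on the sphere $S\subset\R^{2(n-k)}$, nonnegativity of the planar area contributions from Lemma~\ref{lm:2d}(ii), and Lemma~\ref{lm:2d}(iii) as the source of the $C^2$ bound. All of that is fine.

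However, your resolution of the case you yourself flag as ``the main obstacle'' is wrong. You claim that the level constraint $G=1$ \emph{cannot} be met with all $k$ planar components deep in their wells, because this would ``drive $G$ to roughly $1+\frac{kC^2}{b^2}>1$ on the remaining sphere.'' This is a false dichotomy: nothing forces the $\R^{2(n-k)}$-component onto the sphere of radius $a$. If the planar blocks contribute $\sum_{n-k+1}^n c_j=-\delta$ with $\delta$ large, the first block simply satisfies $\sum_1^{n-k}c_j=1+\delta$, i.e.\ the orbit lies over the sphere of squared radius $(1+\delta)a^2$, and $G=1$ holds with no contradiction. A concrete counterexample to your claim: take each planar component constant at a minimum $(0,\pm C)$ of $H_C$ and the first component a Hopf circle on the sphere $\{|z_1|^2=(1+\delta)a^2\}$ with $\delta\approx k\,\frac{C^2}{b^2}$; this is a genuine closed characteristic on $\p V_C$, it is not in group~(1), and \emph{no} planar component of it ever reaches the regime $H_C\geq-\frac{C^2}{4b^2}$, so Lemma~\ref{lm:2d}(iii) never applies to it. The lemma's conclusion is nonetheless true for such orbits, but for a different reason, which is exactly the case split the paper makes: writing $\delta:=-\sum_{n-k+1}^n c_j$, either $\delta>\frac{C^2}{4b^2}$, in which case the sphere component alone contributes action $\geq\pi(1+\delta)a^2>\frac{\pi a^2}{4b^2}\,C^2$, or $\delta\leq\frac{C^2}{4b^2}$, in which case every planar $c_j\geq-\frac{C^2}{4b^2}$ and the (necessarily existing) nonconstant planar component has area $\geq C^2A_{a,b}$ by Lemma~\ref{lm:2d}(iii). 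You need to add the first branch of this dichotomy; as written, your argument denies the existence of orbits that do in fact lie on $\p V_C$ and would not be covered by your bound.
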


\begin{proof}
$V_C$ is a bounded because $G$ is exhausting, and $V_C\subset
  V_k^{a,b}$ follows from $H_C(x,y)\geq\frac{x^2}{a^2}-\frac{y^2}{b^2}$.
Its boundary is of restricted contact type because it is transverse to
the Liouville vector field $Z=\frac12\sum_{1}^n(x_i\frac{\p}{\p
  x_i}+y_i\frac{\p}{\p y_i})$, which in turn follows from~\eqref{eq:HC} by
computation at points of $\p V_C=\{G=1\}$:
\begin{align*}
   Z\cdot G
   &= \sum_1^{n-k}\frac{x_j^2+y_j^2}{a^2} + \sum_{n-k+1}^n
   \frac12\Bigl(x_j\frac{\p H_C}{\p x_j}+y_j\frac{\p H_C}{\p y_j}\Bigr) \cr
   &= 1 + \sum_{n-k+1}^n\left(
   \frac12\Bigl(x_j\frac{\p H_C}{\p x_j}+y_j\frac{\p H_C}{\p y_j}\Bigr)-H_C(x_j,y_j)\right)
   \geq 1.
\end{align*}
Let us study the periodic orbits on $\p V_C$. First, we observe that the corresponding Hamiltonian system with the Hamiltonian $G$ is completely integrable and has integrals
$$
   G_j(x,y):=
   \begin{cases}\frac{x_j^2+y_j^2}{a^2},& j=1,\dots, n-k,\\
   H_C(x_j,y_j),& j=n-k+1,\dots, n.
   \end{cases}
$$
Hence the simple periodic orbits are given by equations $G_j=c_j$ for
$j=1,\dots,n$, with $\sum\limits_1^n c_j=1$.

Let us make an accounting of periodic orbits.
First of all we have orbits
$\{x_{n-k+1}=y_{n-k+1}=\dots x_{n}=y_{n}=0,\; G_j=c_j\geq 0,
j=1,\dots, n-k\}$ with $\sum\limits_1^{n-k} c_j=1$ which foliate the
sphere $S$ of radius $a$ in the subspace
$\R^{2(n-k)}\subset\R^{2n}$. These orbits and their multiples
correspond to group (i) in the lemma and their actions are $k\pi a^2$
for $k\in\N$.

Consider now a simple periodic orbit $\gamma$ which is not in group (i).
Note that $\gamma$ is a product of periodic orbits $\gamma_j$ for the
Hamiltonians $G_j$, and each $\gamma_j$ has nonnegative action by Lemma~\ref{lm:2d}(ii).
By assumption, at least one of the orbits $\gamma_j$,
$j=n-k+1,\dots,n$, is not the constant orbit at the origin.

If at least one of the constants $c_j$, $j=n-k+1,\dots,n$ is positive, then
the action of the orbit $\gamma_j$, and hence of $\gamma$, is $\geq C^2A_{a,b}$ by Lemma~\ref{lm:2d}(iii).
Otherwise, set $\delta:=-\sum\limits_{n-k+1}^n c_j>0$. Then
$\sum\limits_{1}^{n-k} c_j=1+\delta$, and therefore
$\sum\limits_1^{n-k}\int_{\gamma_j}x_jdy_j=\pi (1+\delta)a^2$.
If $\delta>\frac{C^2}{4b^2}$, then the action of the orbit $\gamma$ is $>\pi\Bigl(1+\frac{C^2}{4b^2}\Bigr)a^2$.
Otherwise, all the constants $c_j$ for $j=n-k+1,\dots,n$ satisfy
$c_j\geq-\frac{C^2}{4b^2}$. By assumption, at least one of the corresponding orbits
$\gamma_j$ is nonconstant, so by Lemma~\ref{lm:2d}(iii) the action of
this $\gamma_j$, and hence of $\gamma$, is $\geq C^2A_{a,b}$.
This proves Lemma~\ref{lem:VC}.
\end{proof}


\subsection*{Deformation to a split Hamiltonian}
We write $G=F_1+F_2$ with the Hamiltonians
$$
   F_1:=\sum_1^{n-k}G_j:\R^{2(n-k)}\to\R,\qquad F_2:=\sum_{n-k+1}^{n}G_j:\R^{2k}\to\R.
$$
Recall that $F_1,F_2$ have the following properties:
\begin{enumerate}
\item $F_1$ and $F_2$ are exhausting with $F_1\geq
  -(n-k)\frac{C^2+\eps}{b^2}$ and $F_2\geq 0$;
\item $Z_i\cdot F_i\geq F_i$ for the respective Liouville fields $Z_i$;
\item all periodic orbits of $F_i$ have action $\geq0$;
\item all periods of nonconstant periodic orbits of $F_i$ are bounded
  below by some $\delta>0$;
\item all second partial derivatives of $F_i$ are uniformly bounded;
\item the (non-Hamiltonian) action of each $k$-fold covered periodic
  orbit $z_2$ of $F_2$ satisfies $\AA(z_2)=\pi ka^2F_2(z_2)\geq \pi a^2F_2(z_2)$.
\end{enumerate}
Consider a family of Hamiltonians $H_s:\R^{2n}\to\R$, $s\in\R$, of the form
$$
   H_s(z_1,z_2) = h_s\bigl(F_1(z_1),F_2(z_2)\bigr)
$$
with a smooth family of function $h_s:\R^2\to\R$ satisfying the following properties:
\begin{enumerate}
\item $h_s$ is locally constant in $s$ outside a compact subset of $\R$;
\item outside a compact subset of $\R^2$ we have $0<\frac{\p h_s}{\p
  F_1}<\delta$ or $0<\frac{\p h_s}{\p F_2}<\delta$ (or both);
\item all second partial derivatives of the function $\R^3\to\R$,
  $(s,F_1,F_2)\mapsto h_s(F_1,F_2)$ are uniformly bounded.
\end{enumerate}

\begin{lemma}\label{lem:max-principle}
For $H_s$ as above all $1$-periodic orbits are contained in a compact
set, and for each $c>0$ the Floer homology $FH^{(0,c)}(H_s)$ is
well-defined and independent of $s$.
\end{lemma}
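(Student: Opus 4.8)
The plan is to prove the two assertions separately, with the compactness of the $1$-periodic orbit set being the heart of the matter; once that is established, the invariance of Floer homology under the $s$-deformation is a standard continuation argument. First I would establish the a priori $C^0$-bound on $1$-periodic orbits of the Hamiltonian vector field $X_{H_s}$. The key point is that, since $H_s(z_1,z_2)=h_s(F_1(z_1),F_2(z_2))$ is a function of the two commuting Hamiltonians $F_1,F_2$, its Hamiltonian flow preserves each $F_i$, so a $1$-periodic orbit $z(t)=(z_1(t),z_2(t))$ lies on a fixed level set $\{F_1=c_1,\,F_2=c_2\}$; along it, $z_i$ traces a (reparametrized) orbit of $X_{F_i}$ with constant speed factor $\frac{\p h_s}{\p F_i}$ evaluated at $(c_1,c_2)$. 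By property (ii) of the $F_i$ (nonconstant orbits of $F_i$ have period $\geq\delta$) together with property (ii) of $h_s$ (outside a compact set, $0<\frac{\p h_s}{\p F_i}<\delta$ for at least one $i$), a $1$-periodic orbit whose $F$-values lie outside a large compact subset of $\R^2$ would have to close up in time $<1$ in at least one factor while the corresponding orbit $z_i$ is \emph{constant}; hence both $z_1$ and $z_2$ are constant, i.e. $z$ is a critical point of $H_s$. Since $F_1,F_2$ are exhausting, their level sets are compact, and the set of critical points of $H_s$ lying outside a compact $F$-region is itself constrained: it consists of points where $\frac{\p h_s}{\p F_i}=0$ at a critical point of $F_i$, or where $z_i$ is a critical point of $F_i$ for both $i$. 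In either case the exhausting property of the $F_i$ confines these to a compact subset of $\R^{2n}$ uniformly in $s$ (using property (i) of $h_s$, that $h_s$ is $s$-independent outside a compact $s$-interval). This gives the desired uniform compact set containing all $1$-periodic orbits of all $H_s$.

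Next I would argue that the Floer homology $FH^{(0,c)}(H_s)$ is well-defined. For this one needs a $C^0$-bound on finite-energy Floer cylinders with the relevant action window, which is exactly the role of the maximum principle. The domains $V_C=\{G<1\}$ have restricted contact-type boundary (Lemma~\ref{lem:VC}), and the condition $Z_i\cdot F_i\geq F_i$ in property (ii) of the $F_i$ ensures that $H_s$, being a monotone function of $F_1,F_2$ with $\frac{\p h_s}{\p F_i}>0$ outside a compact set, satisfies the hypotheses of the standard maximum principle of Floer--Hofer (cf.~\cite{FH94,FHW94}): Floer trajectories cannot escape to infinity, because along the Liouville-convex ends the function $r\mapsto H_s$ grows in the ``right'' direction. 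Concretely, outside a large ball the level sets of $G$ (hence of the relevant sublevel exhaustion) are of contact type, $X_{H_s}$ points in a controlled direction relative to the Liouville field, and the subharmonicity argument for $\rho\circ u$ (with $\rho$ the radial coordinate of the appropriate Liouville structure) shows $u$ stays in a compact set. Combined with the uniform period bound (property (iv) of the $F_i$) and the uniform $C^2$-bounds (properties (v)--(vi) of the $F_i$ and (iii) of $h_s$), this yields the compactness needed to define $FH^{(0,c)}(H_s)$ and to make the moduli spaces of continuation trajectories compact.

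Finally, for $s$-independence I would use the standard continuation map: since $h_s$, and hence $H_s$, is locally constant in $s$ outside a compact $s$-interval (property (i) of $h_s$), and since all the above a priori bounds hold uniformly in $s$, the parametrized moduli spaces of continuation Floer trajectories between $H_{s_0}$ and $H_{s_1}$ are compact, and the usual argument shows the induced map $FH^{(0,c)}(H_{s_0})\to FH^{(0,c)}(H_{s_1})$ is an isomorphism with inverse the reverse continuation map. The action window $(0,c)$ is respected throughout because the continuation data can be chosen monotone in the appropriate sense (the relevant integrated curvature term has a definite sign on the convex ends, again by property (ii)), so no new orbits enter or leave the window during the deformation.

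The main obstacle I expect is the second step: verifying carefully that the maximum principle applies in this non-compact, ``convex at infinity'' setting to a Hamiltonian that is merely a function of $F_1$ and $F_2$ rather than linear in a radial coordinate. The subtlety is that the Liouville structures on the two factors $\R^{2(n-k)}$ and $\R^{2k}$ are different (indeed $V_k^{a,b}$ is noncompact), so one must check that the combined radial function, along which the maximum principle is run, is genuinely plurisubharmonic for a suitable almost complex structure and that $H_s$ is increasing along it — this is precisely what conditions (i)--(ii) on the $F_i$ and (ii) on $h_s$ are designed to guarantee, and the bulk of the work is assembling these into the contact-type estimate at infinity.
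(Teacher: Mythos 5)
Your treatment of the first assertion is essentially the paper's: the flow of $H_s$ preserves $F_1$ and $F_2$, each factor of a $1$-periodic orbit is a reparametrized orbit of $X_{F_i}$ with speed $\frac{\p h_s}{\p F_i}$, and hypotheses (iv) on the $F_i$ and (ii) on $h_s$ confine everything to a compact set. (One small slip: only the factor $i$ for which $0<\frac{\p h_s}{\p F_i}<\delta$ is forced to be constant, not both; the exhausting property of the $F_i$ and the boundedness of their critical loci then finish the confinement, as you in effect say in the next sentence.)

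The gap is in your second step. You propose to bound Floer cylinders by a Floer--Hofer maximum principle run along a ``combined radial function''. For $0<s<1$ the Hamiltonian $H_s$ is a genuine function of the pair $(F_1,F_2)$ (through the term $\phi(F_1+F_2-1)$), and $F_1$, $F_2$ are exhaustions adapted to two \emph{different} Liouville fields $Z_1$, $Z_2$ on the two factors; the level sets of $H_s$ at infinity are not the level sets of a single family of contact-type hypersurfaces (e.g.\ for $|z_1|\to\infty$ with $F_2(z_2)<1$ fixed, $H_1=\frac{\delta}{2}F_1+\phi(F_2-1)$ still depends on $z_2$). So the one-coordinate maximum principle does not apply directly, and the split version produces cross terms $\frac{\p^2h_s}{\p F_1\p F_2}$ of indefinite sign that obstruct subharmonicity of $F_i\circ u$. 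You yourself flag this as ``the bulk of the work'' and do not supply it --- but this is exactly the content of the lemma. The paper avoids the issue entirely: hypotheses (v) on the $F_i$ and (iii) on $h_s$ (the uniform second-derivative bounds, which your plan never puts to use) give $|D^2H_s|\le A$ and $|\nabla\p_sH_s(u)|\le B|u|$, whence the Floer equation yields $\Delta|u|^2\ge-\bigl(\tfrac12A^2+B\bigr)|u|^2$; the mean-value argument of \cite{Cie94} then confines all finite-energy cylinders to a compact set, with no convexity at infinity needed. To complete your route you would have to prove a two-variable no-escape lemma for the coupled term, or restrict the maximum principle to the split endpoints and argue the interpolation separately; as written, well-definedness of $FH^{(0,c)}(H_s)$ for intermediate $s$ is not established.
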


\begin{proof}
For each $s$ the $1$-periodic orbits of $H_s$ are of the form $z=(z_1,z_2)$, where the
$z_i$ satisfy $\dot z_i=\frac{\p h_s}{\p F_i}X_{F_i}(z_i)$. Hence
$F_1,F_2$ are constant along $z$ and
$z_i(t)=\gamma_i\Bigl(\frac{\p h_s}{\p F_i}t\Bigr)$ for periodic orbits
$\gamma_i$ of $X_{F_i}$ of period $\frac{\p h_s}{\p F_i}$. Therefore,
conditions (iv) on $F_i$ and (ii) on $h_s$ imply that all $1$-periodic
orbits of $X_{H_s}$ are contained in a compact set.

Next, let $u:\R\times S^1\to\R^{2n}$ be a Floer cylinder connecting
$1$-periodic orbits. It satisfies
$$
   u_s+iu_t+\nabla H_s(u)=0,
$$
where $u_s,u_t$ denotes the partial derivatives with respect to the
coordinates $(s,t)\in\R\times S^1$. The bounds on the second
derivatives of $F_i$ and $h$ yield uniform bounds $|D^2H_s|\leq A$ on
the Hessian of $H_s$ and $|\nabla\p_sH_s(u)|\leq B|u|$ on the gradient of
the $s$-derivative $\p_sH_s$. Using this, a standard computation shows that the
function $\rho(s,t):=|u(s,t)|^2$ satisfies the estimate
\begin{align*}
   \Delta\rho
   &= |u_s|^2+|u_t|^2+\la u,iD^2H(u)u_t-D^2H(u)u_s+\nabla\p_sH_s(u)\ra \cr
   &\geq |u_s|^2+|u_t|^2-A|u|(|u_s|+|u_t|) - B|u|^2
   \geq -\Bigl(\frac12A^2+B\Bigr)\rho.
\end{align*}
By an argument in~\cite{Cie94}, this estimate implies that
Floer cylinders for the $s$-dependent Hamiltonian $H_s$ remain in a
compact set, hence the Floer homology of $H_s$ is well-defined and
independent of $s$.
\end{proof}

Pick a nondecreasing function $\phi:\R\to\R$ satisfying
$\phi(t)=\delta t/2$ for $t\geq 0$ and $\phi(t)\equiv -m$ for
$t\leq\delta$ with some large constant $m$.
For $s\in[0,1]$ consider the Hamiltonian
$$
   H_s = h_s(F_1,F_2) := (1-s)\phi(F_1+F_2-1)+s\phi(F_1)+s\phi(F_2-1).
$$

\begin{lemma}\label{lem:Hs}
For $\phi$ as above with $m>c$ each $1$-periodic orbit $(z_1,z_2)$ of $H_s$
with action in the interval $(0,c)$ satisfies $z_1\equiv0$, and $z_2$ is
a $1$-periodic orbit of $\phi(F_2-1)$ of action $\pi a^2k$ for
$k=1,\dots,\lfloor\frac{c}{\pi a^2}\rfloor$.
\end{lemma}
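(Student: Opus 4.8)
The plan is to analyze the $1$-periodic orbits of $H_s = (1-s)\phi(F_1+F_2-1)+s\phi(F_1)+s\phi(F_2-1)$ directly via the splitting structure. Since $H_s$ is a function of $(F_1,F_2)$, any $1$-periodic orbit $z=(z_1,z_2)$ has $F_1,F_2$ constant along it, and $z_i$ is a reparametrized periodic orbit of $X_{F_i}$ with period equal to $\frac{\p h_s}{\p F_i}$ evaluated at that orbit. So the first step is to write out $\frac{\p h_s}{\p F_1} = (1-s)\phi'(F_1+F_2-1)+s\phi'(F_1)$ and $\frac{\p h_s}{\p F_2} = (1-s)\phi'(F_1+F_2-1)+s\phi'(F_2-1)$, and observe that since $\phi'$ takes values in $[0,\delta/2]$, both partial derivatives lie in $[0,\delta]$; by property (iv) of the $F_i$ (all nonconstant periodic orbits have period $\geq\delta$), this forces each $z_i$ to be either a constant orbit (at a critical point of $F_i$) or to have its effective period exactly equal to a period of $X_{F_i}$, which by the period bound and the range of $\frac{\p h_s}{\p F_i}$ can only happen at the boundary value $\frac{\p h_s}{\p F_i}=\delta$, i.e. where $\phi'$ is saturated.

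Next I would compute the Hamiltonian action $\AA(z) = \int z^*\lambda_\st - \int_0^1 H_s(z)\,dt$. Because $z$ is a product orbit, $\int z^*\lambda_\st = \AA_{F_1}(z_1)+\AA_{F_2}(z_2)$ where $\AA_{F_i}$ is the $\lambda_\st$-action of the underlying orbit of $X_{F_i}$, and these are $\geq 0$ by property (iii). The term $\int_0^1 H_s(z)\,dt = h_s(F_1,F_2)$ is a constant. The key quantitative input is that $\phi\equiv -m$ on $(-\infty,\delta]$ with $m>c$: if $z_1$ is \emph{not} the constant orbit at the origin, then $F_1(z_1)>0$ is bounded below by a definite constant (the $F_1$-value of the smallest sphere orbit, which is $\pi a^2$ in the rescaled normalization, but in any case positive and bounded away from $0$), and then the argument $F_1+F_2-1$ and/or $F_1$ is pushed into the linear regime of $\phi$; one checks that in every case where $z_1\not\equiv 0$ the action exceeds $c$. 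The cleanest way to see this: if $z_1\not\equiv 0$ then $z_1$ is a nonconstant orbit of $X_{F_1}$, so (by the previous paragraph) $\frac{\p h_s}{\p F_1}=\delta$, which forces $\phi'(F_1)=\delta/2$ and $\phi'(F_1+F_2-1)=\delta/2$, hence $F_1\geq 0$ and $F_1+F_2-1\geq 0$, so $\phi(F_1)=\delta F_1/2$ and $\phi(F_1+F_2-1)=\delta(F_1+F_2-1)/2$; plugging into $H_s$ and using $\AA_{F_i}\geq0$ one gets $\AA(z)\geq -h_s(F_1,F_2)\geq -C$-type bounds — actually here one should instead use that $\phi(F_2-1)\geq -m$ always and carefully bound to conclude $\AA(z)$ is large; the honest check needs $m > c$ exactly here.

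Then I would treat the case $z_1\equiv 0$, so $F_1(0)=0$ and $H_s$ reduces along $z$ to $(1-s)\phi(F_2-1)+s\phi(F_2-1) = \phi(F_2-1)$, independent of $s$ — so $z_2$ is a $1$-periodic orbit of $\phi(F_2-1)=\phi(F_2)\circ(\text{shift})$, i.e. of the Hamiltonian $\phi(F_2-1)$ on $\R^{2k}$. Its orbits are the periodic orbits of $X_{F_2}$ reparametrized with period $\phi'(F_2-1)\in[0,\delta/2]$; again the period bound forces either the constant orbit at the origin of $F_2$ (but $F_2(0)=0$, giving $F_2-1=-1<\delta$, so $\phi(F_2-1)=-m$, contributing action $\leq -m + 0 < 0$, excluded) or $\phi'(F_2-1)=\delta/2$ with $F_2-1\geq 0$. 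Using property (vi), the action of a $k$-fold covered orbit $z_2$ of $F_2$ is $\pi k a^2 F_2(z_2)$ with $F_2(z_2)$ constant; combining with the action formula and the constraint that pins down $F_2$, one extracts that the admissible values are $\AA(z) = \pi a^2 k$ for $k=1,\dots,\lfloor c/\pi a^2\rfloor$, the upper cutoff coming precisely from the window $(0,c)$. The main obstacle I expect is the bookkeeping in the $z_1\not\equiv 0$ case: one must verify uniformly in $s\in[0,1]$ that no such orbit sneaks into the action window, and this is exactly where the hypothesis $m>c$ is consumed; getting the inequalities to close cleanly (rather than with an $s$-dependent or $C$-dependent fudge) is the delicate point, and it may require choosing $\phi$ with a slightly longer plateau or invoking the convexity-type estimate $\frac{t}{2}\wt g_C(t)\geq \wt g_C(t)$ repackaged as property (vi) to control $F_2$ from below on nonconstant orbits.
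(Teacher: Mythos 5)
Your overall skeleton is the same as the paper's: orbits of $H_s$ split as products, $F_1,F_2$ are constant along them, each factor is a reparametrized $X_{F_i}$-orbit with speed $\tfrac{\p h_s}{\p F_i}$, and one then splits into cases according to whether $z_1\equiv 0$. But the execution rests on a false premise about $\phi$ that breaks the key step. You assert that $\phi'$ takes values in $[0,\delta/2]$, so that $\tfrac{\p h_s}{\p F_i}\in[0,\delta]$ and nonconstant factors can only occur "at the boundary value $\delta$" where $\phi'$ is saturated. In fact $\phi$ climbs from $-m$ to $0$ over a bounded interval, so $\phi'$ is of order $m$ (hence huge) in the transition region; this is exactly where \emph{all} the nonconstant $1$-periodic orbits live, including the sphere orbits of action $\pi a^2k$ that the lemma is supposed to retain (they need $\phi'(F_2-1)=\pi a^2 k$, which exceeds $\delta/2$). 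If your bound on $\phi'$ were correct, property (iv) would kill every nonconstant orbit and the lemma would be vacuous. Consequently your mechanism for excluding $z_1\not\equiv 0$ — "nonconstant $z_1$ forces $\phi'(F_1)=\delta/2$, hence $F_1\geq 0$, bounded below by the smallest sphere value" — is backwards. (Part of the confusion may come from the paper's own index slip: in the displayed definitions $F_1$ is the sum of oscillators and $F_2$ the sum of $H_C$'s, but the properties (i)--(vi), the statement of the lemma, and its proof all use the opposite convention, with $F_1$ the $H_C$-part on $\R^{2k}$ and $F_2$ the ball part on $\R^{2(n-k)}$. Your write-up uses the ball picture for $F_1$ in the exclusion step and for $F_2$ in property (vi), which cannot both hold.)

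The correct argument runs the other way: if $z_1$ is nonconstant with total action below $c$, Lemma~\ref{lm:2d}(iii) forces each nonconstant $H_C$-component to have value $\leq -C^2/(4b^2)$, hence $F_1(z_1)\leq -C^2/(4b^2)$, so $\phi(F_1)=-m$ and $\phi'(F_1)=0$; then $\dot z_1\neq 0$ requires $\phi'(F_1+F_2-1)\neq 0$, i.e.\ $F_1+F_2-1>-\delta$, which forces $F_2>1-\delta+C^2/(4b^2)$ and, by property (vi), an action exceeding $c$. You also omit the case where $z_1$ is constant but sits at one of the nonzero critical points $(0,\pm C)$ of $H_C$ (where $F_1<0$); the paper needs a separate sub-case analysis there, distinguishing $s=0$, $0<s<1$, $s=1$, each time concluding $H_s(z)\leq -m$ and hence $\AA_{H_s}(z)\geq m>c$ — this is where the hypothesis $m>c$ is actually consumed, not in the nonconstant-$z_1$ case. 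Finally, a small but telling sign error: the constant orbit at the origin has action $-\int_0^1 H_s = +m>c$, not "$\leq -m+0<0$"; it is excluded for lying above the window, not below it.
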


\begin{proof}
Consider a $1$-periodic orbit $z=(z_1,z_2)$ of $H_s$ with action in the
interval $(0,c)$. Its components satisfies the equations
\begin{align*}
   \dot z_1 &= \Bigl((1-s)\phi'(F_1+F_2-1)+s\phi'(F_1)\Bigr)X_{F_1}(z_1),\cr
   \dot z_2 &= \Bigl((1-s)\phi'(F_1+F_2-1)+s\phi'(F_2-1)\Bigr)X_{F_2}(z_2).
\end{align*}
We distinguish three cases.

Case 1: $z_1$ is not constant. Then by Lemma~\ref{lm:2d}, for the action
to be below $c$, each nonconstant component of $z_1$ must have value
$H_C\leq-\frac{C^2}{4b^2}$. Since each constant component has value
$\leq 0$ and at least one component is nonconstant, we deduce
$$
   F_1(z_1)\leq -\frac{C^2}{4b^2}.
$$
It follows that $\phi(F_1)=-m$ and $\phi'(F_1)=0$, so $z_1$ satisfies
the equation $\dot z_1 = (1-s)\phi'(F_1+F_2-1)X_{F_1}(z_1)$. Since
$\dot z_1\neq 0$, we must have $s<1$ and $F_1+F_2-1>-\delta$. Together
with the preceding displayed equation this yields
$F_2(z_2)>1-\delta-F_1(z_1)\geq 1-\delta+\frac{C^2}{4b^2}$, which in
view of property (vi) of $F_2$ implies $\AA(a_2)>\pi a^2\Bigl(1-\delta+\frac{C^2}{4b^2}\Bigr)>c$.
So Case 1 cannot occur.

Case 2: $z_1$ is constant but not all its component are zero. We rule
this out by distinguishing several cases.

(i) If $X_{F_1}=0$, then each components of $z_1$ is a critical point
of $H_C$, with at least one of them being nonzero. Since $H_C(0)=0$
and $H_C=$ at the minima we conclude
$F_1(z_1)\leq -\frac{C^2+\eps}{b^2}\leq -\frac{C^2}{4b^2}$, which is
ruled out as in Case 1.

(ii) If $X_{F_1}\neq0$ and $0<s<1$, then we must have
$\phi'(F_1+F_2-1)=\phi'(F_1)=0$, hence $\phi(F_1+F_2-1)=\phi(F_1)=-m$.
Then $z_2$ satisfies $\dot z_2=s\phi'(F_2-1)X_{F_2}(z_2)$, so by
the choice of $\phi$ it can only be $1$-periodic if $\phi(F_2-1)\leq 0$.
Thus $H_s(z)\leq -m$ and the Hamiltonian action of $z$ satisfies
$\AA_{H_s}(z)\geq m>c$.

(iii) If $X_{F_1}\neq0$ and $s=0$, then $\phi'(F_1+F_2-1)=0$, hence
$H_0(z)=\phi(F_1+F_2-1)=-m$ and again $\AA_{H_0}(z)\geq m>c$.

(iv) If $X_{F_1}\neq0$ and $s=1$, then $\phi'(F_1)=0$, hence
$\phi(F_1)=-m$. Then $\dot z_2=\phi'(F_2-1)X_{F_2}(z_2)$ implies
$\phi(F_2-1)\leq 0$, thus $H_1(z)\leq -m$ and again $\AA_{H_1}(z)\geq m>c$.

Case 3: $z_1\equiv0$. Then $F_1(z_1)=0$ and $z_2$ is of the form
described in the lemma.
\end{proof}

\begin{proof}[Proof of Proposition~\ref{prop:computation}]
Lemma~\ref{lem:max-principle} and Lemma~\ref{lem:Hs} together imply
(after replacing $H_s$ by $H_{\sigma(s)}$ for a nondecreasing function
$\sigma:\R\to[0,1]$ which equals $0$ for $s\leq0$ and $1$ for $s\geq1$)
that the Floer homology $FH^{(0,c)}(H_s)$ is independent of $s\in[0,1]$.
By definition, the Hamiltonian $H_0(z)=\phi\bigl(G(z)-1\bigr)$
computes the symplectic homology of $V_C$,
$$
   FH^{(0,c)}(H_0)\cong SH^{(0,c)}(V_C).
$$
The Hamiltonian
$$
   H_1(z_1,z_2) = \phi\bigl(F_1(z_1)\bigr)+\phi\bigl(F_2(z_2)-1\bigr)
$$
is split, as well as the corresponding Floer equation.
It follows that all its Floer cylinders are contained in the subspace $\R^{2(n-k)}$,
so $H_1$ computes the symplectic homology of the ball $B_a^{2(n-k)}$
of radius $a$ in $\R^{2(n-k)}$. This symplectic homology is computed
in~\cite{FHW94}, up to an index shift by $1$ due to our different
conventions, to be
$$
   SH_j^{(0,c)}(B_a^{2(n-k)}) \cong\begin{cases}
      \Z_2, & c>\pi a^2\text{ and }j=n-k, \\
      \Z_2, & c>\pi a^2\text{ and }j=(n-k)\Bigl(2\lfloor\frac{c}{\pi a^2}\rfloor-1\Bigr)-1, \\
      0, & \text{otherwise}.
   \end{cases}
$$
This proves part (a) of Proposition~\ref{prop:computation}. Parts (b)
and (c) follow by similar arguments.
\end{proof}

\section{Geometry of the dominating positive cone}\label{section-geometry}

Let $\g^+$ be the maximal dominating cone of an open contact manifold $(U,\xi)$.
The group $\R_+$ acts on $\Theta:= \g^+/\sim$ by multiplication.
We assume that $\g^+$ is orderable up to conjugation and consider the
binary relation $\preceq$ on $\Theta$ from Section~\ref{sec:partial-order}.
For a pair of classes $f,h \in \Theta$ define
\begin{equation}\label{eq-rho}
   \rho(f,h) := \inf\{s>0\mid f \preceq sh\}\;.
\end{equation}
The fact that $\g^+$ is dominating implies that the set on the right
hand side is nonempty,
and orderability up to conjugation means that {\it there exist} $a,b \in \Theta$ such that $\rho(a,b) \neq 0$.
Furthermore, clearly we have sub-multiplicativity
\begin{equation}\label{eq-rho-1}
\rho(f,h) \leq \rho(f,g)\rho(g,h) \;\; \text{for all }f,g,h \in \Theta\;.
\end{equation}
Observe that $\rho(h,h) \geq 1$ for all $h \in \Theta$ by Lemma \ref{lem:non-orderable-g} (a). On the other hand, obviously $\rho(h,h) \leq 1$ and hence $\rho(h,h)=1$.

We claim that $\rho(f,g) \neq 0$ {\it for all} $f,g \in \Theta$. Indeed, take $a,b$ with $\rho(a,b)\neq 0$
and write, by sub-multiplicativity,
$$0< \rho(a,b) \leq \rho(a,f)\rho(f,g)\rho(g,b)\;,$$
yielding $\rho(f,g) \neq 0$. The claim follows.

Define now a function $d: \Theta \times \Theta \to \R$
by
\begin{equation}\label{eq-d}
d(g,h) = \max (|\log \rho(g,h)|,|\log\rho(h,g)|)\;.
\end{equation}
The above discussion shows that $d$ is a pseudo-metric on $\Theta$: it is symmetric,
nonnegative and satisfies the triangle inequality. It is unknown whether $d$ is a genuine distance on $\Theta$ (and sounds unlikely that it is). Introduce the equivalence relation $\approx$ on $\Theta$
by $f \approx g$ whenever $d(f,g)=0$. This relation measures the deviation of $d$ from a genuine metric.
Interestingly enough, it also measures the deviation of the binary relation $\preceq$ from a genuine
partial order. Indeed, if $f \preceq g$ and $g \preceq f$, then we have $\rho(f,g) \leq 1$ and
$\rho(g,f) \leq 1$. By  \eqref{eq-rho-1},
$$1 = \rho(f,f) \leq \rho(f,g)\rho(g,f)\;,$$
and hence
$$\rho(f,g)=\rho(g,f) =1\;.$$ This yields $d(f,g)=0$ and hence $f \approx g$. Denote
$\Xi:= \Theta/\approx$, and note that the pseudo-metric $d$ descends
to $\Xi$ as a genuine metric $D$.
What about the partial order? Define a relation $\ll$ on  $\Xi$ as follows:
$p \ll q$ if there exist $f,g \in \Theta$ and a sequence of positive numbers $\eps_i \to 0$ so that
$p=[f]$, $q=[g]$ and $f \preceq (1+\eps_i)g$ for all $i$. (The
$\eps_i>0$ are needed because of the infimum in~\eqref{eq-rho}). Note that this is a transitive and reflexive relation. We claim that {\it $\ll$ is a genuine partial order.}  Indeed if $p \ll q$ and $q \ll p$, then $D(p,q)=0$ and hence $p=q$. It would be interesting to explore the geometry of $\Xi$
with respect to $D$.

\begin{rem}\label{rem-d-cbm} {\rm The quantity $\rho$ and the pseudo-metric $d$ has cousins in the earlier literature. On the one hand, they can be considered Lie algebra counterparts of the relative growth between positive
contactomorphisms, and the corresponding pseudo-metric, respectively, studied in \cite{EP}. On the other hand,
each compactly supported non-negative function $H$ on $U$ defines a ``contact form" $\alpha/H$ on $U$,
where the quotation marks stand for the fact that this form could be infinite and certainly is infinite
outside a compact set. With this language, the adjoint action of contactomorphisms on functions corresponds
to the action of contactomorphisms on contact forms, and the metric $d$ is an analogue, for open manifolds,
of the {\it contact Banach-Mazur distance} introduced by Yaron Ostrover and the third-named author and discussed in
\cite{PZ,SZ} in the context of closed contact manifolds. }
\end{rem}

\begin{example}\label{exam-d-uk} Consider $(S^{2n-1} \setminus
  \Pi_k,\xi_{st})$ as above with $k<n$.
The function $w: \g^+ \to (0,\infty)$ from Theorem~\ref{thm:order-algebra}
descends to a function $\wt{w}: \Theta \to (0,\infty)$ with the following properties:
\begin{itemize}
\item[{(i)}] $h \preceq f \Rightarrow \wt{w}(h) \geq \wt{w}(f)$;
\item[{(ii)}] $ \wt{w}(sh)= s^{-2}\wt{w}(h)\;\; \text{for all }s >0, h \in \Theta\;.$
\end{itemize}
These properties readily yield the following inequality:
\begin{equation} \label{eq-dw}
d(f,h) \geq \frac{1}{2}\big{|}\log \frac{\wt{w}(f)}{\wt{w}(h)}\big{|}\;.
\end{equation}
For instance, this shows that $d(g,sg)= |\log s|$, and in particular
the restriction of the pseudo-metric $d$ to each orbit of the
$\R_+$-action on $\Theta$ is isometric to the Euclidean line. It would
be interesting to explore whether $\Theta$ admits a quasi-isometric
embedding of the Euclidean $\R^N$ for $N \geq 2$. Let us mention also
that the above conclusions continue to hold verbatim for the metric space $(\Xi, D)$.
\end{example}

 \end{document}